\DeclareMathOperator{\cl}{Cl_2}
\begin{document}

\title{Zeros of the dilogarithm}

\author{Cormac O'Sullivan\footnote{
\newline
{\em 2010 Mathematics Subject Classification.}  33B30, 30C15 (11P82)
\newline
{\em Key words and phrases.} Dilogarithm zeros, Newton's method, polylogarithms.
\newline
Support for this project was provided by a PSC-CUNY Award, jointly funded by The Professional Staff Congress and The City University of New York.}}


\maketitle

\def\s#1#2{\langle \,#1 , #2 \,\rangle}

\def\H{{\mathbf{H}}}
\def\F{{\frak F}}
\def\C{{\mathbb C}}
\def\R{{\mathbb R}}
\def\Z{{\mathbb Z}}
\def\Q{{\mathbb Q}}
\def\N{{\mathbb N}}
\def\G{{\Gamma}}
\def\GH{{\G \backslash \H}}
\def\g{{\gamma}}
\def\L{{\Lambda}}
\def\ee{{\varepsilon}}
\def\K{{\mathcal K}}
\def\Re{\mathrm{Re}}
\def\Im{\mathrm{Im}}
\def\PSL{\mathrm{PSL}}
\def\SL{\mathrm{SL}}
\def\Vol{\operatorname{Vol}}
\def\lqs{\leqslant}
\def\gqs{\geqslant}
\def\sgn{\operatorname{sgn}}
\def\res{\operatornamewithlimits{Res}}
\def\li{\operatorname{Li_2}}
\def\lip{\operatorname{Li}'_2}
\def\pl{\operatorname{Li}}

\def\clp{\operatorname{Cl}'_2}
\def\clpp{\operatorname{Cl}''_2}

\newcommand{\stira}[2]{{\genfrac{[}{]}{0pt}{}{#1}{#2}}}
\newcommand{\stirb}[2]{{\genfrac{\{}{\}}{0pt}{}{#1}{#2}}}
\newcommand{\norm}[1]{\left\lVert #1 \right\rVert}



\newtheorem{theorem}{Theorem}[section]
\newtheorem{lemma}[theorem]{Lemma}
\newtheorem{prop}[theorem]{Proposition}
\newtheorem{conj}[theorem]{Conjecture}
\newtheorem{cor}[theorem]{Corollary}

\newcounter{coundef}
\newtheorem{adef}[coundef]{Definition}

\renewcommand{\labelenumi}{(\roman{enumi})}

\numberwithin{equation}{section}

\bibliographystyle{alpha}

\begin{abstract}
We show that the dilogarithm has at most one zero on each branch, that each zero is close to a root of unity, and that they may be found to any precision with Newton's method. This work is motivated by applications to the asymptotics of coefficients in partial fraction decompositions considered by Rademacher. We also survey what is known about zeros of polylogarithms in general.
\end{abstract}

\section{Introduction}
In the recent resolution of an old conjecture of Rademacher, described below in Section \ref{rade}, the location of a particular zero, $w_0$, of the dilogarithm played an important role. It has been known since \cite{Ler} that the only zero of the dilogarithm on its principal branch is at $0$. The zero $w_0$ is on the next branch. Zeros on further branches were also needed  in \cite{OS1} and in this paper we locate all  zeros  on every branch.

The dilogarithm is  initially defined as
\begin{equation}\label{def0}
\li(z):=\sum_{n=1}^\infty \frac{z^n}{n^2} \quad \text{ for }|z|\lqs 1,
\end{equation}
see for example \cite{max, Zag07}, with an analytic continuation given by
\begin{equation}\label{li_def}
 -\int_{0}^z \log(1-u) \frac{du}{u}.
\end{equation}
The
principal branch of the logarithm has $-\pi < \arg z \lqs \pi$ with a branch cut $(-\infty,0]$.
From \eqref{li_def}, the
corresponding principal branch of the dilogarithm
has  branch points at $1$, $\infty$ and branch cut $[1,\infty)$. Crossing this branch cut from below, it is easy to show that the dilogarithm increases by $2\pi i \log (z)$ over its principal value. On this new sheet there is now an additional branch point at $0$ coming from the logarithm. In general, the dilogarithm is  a multi-valued holomorphic function with branch points at $1$, $\infty$ and off the principal branch another branch point at $0$. For clarity,
the notations $\li(z)$ and $\log(z)$ will mean the principal branches of these functions from this point. We also note that $\li(z)$ may be expressed as a ${}_3F_2$ hypergeometric function, see \cite[Sect. 2.6]{AAR}, with $\li(z)/z ={}_3F_2(1,1,1;2,2;z)$.

\subsection{Main results}

On any branch it follows, see for example \cite{Ler, max}, that the dilogarithm must take the form
$$
 \phi_{A,B}(z):=\li(z) + 4\pi^2  A +   2\pi i  B  \log \left(z\right) \qquad (A,B \in \Z, \ z \in \C)
$$
and so we want to know when $\phi_{A,B}(z)=0$.

\begin{theorem} \label{ebt}
For $A$, $B \in \Z$, the function $\phi_{A,B}(z)$ has no zeros in $\C$ unless
\begin{enumerate}
\item $B=0$ and $A\gqs 0$ or
\item $-|B|/2<A\lqs |B|/2$.
\end{enumerate}
If (i) or (ii) holds then $\phi_{A,B}(z)$ has exactly one  zero and it is simple.
\end{theorem}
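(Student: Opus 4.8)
The plan is to analyze $\phi_{A,B}(z) = \li(z) + 4\pi^2 A + 2\pi i B\log z$ by combining a boundary/argument-principle count with careful estimates near the unit circle, together with monotonicity information along rays. First I would treat the degenerate case $B=0$ separately: here $\phi_{A,0}(z) = \li(z) + 4\pi^2 A$, and since $\li$ maps the principal branch domain (the plane cut along $[1,\infty)$) in a known, essentially univalent-on-rays manner, one checks that $\li(z) = -4\pi^2 A$ has a solution iff $-4\pi^2 A$ lies in the range of $\li$; because $\li(1) = \pi^2/6 > 0$, $\li(0)=0$, and $\li$ decreases to $-\infty$ along the negative real axis (and more generally its real part is bounded below only by $0$ on the relevant region), the equation is solvable precisely when $A \ge 0$, and then the solution is real, negative (or $0$ when $A=0$), and simple because $\lip(z) = -\log(1-z)/z \ne 0$ there. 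This disposes of case (i).

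For $B \ne 0$, by the functional equation $\li(z) + \li(1/z) = -\pi^2/6 - \tfrac12\log^2(-z)$ and the symmetry $\phi_{A,B}\leftrightarrow$ related $\phi$ under $z\mapsto 1/z$, together with complex conjugation $z \mapsto \bar z$ (which sends $B \mapsto -B$), I would reduce to $B \ge 1$ and to $|z| \ge 1$, or alternatively just set up a winding-number computation directly. The core step is to apply the argument principle to $\phi_{A,B}$ on a large disk (or on the cut plane), showing the number of zeros equals the change in $\arg \phi_{A,B}$ over the boundary divided by $2\pi$. On a circle $|z| = R$ with $R$ large, $\li(z)$ grows only like $-\tfrac12\log^2 z$ while $2\pi i B \log z$ dominates the imaginary part's linear-in-$\arg z$ behavior; tracking the argument carefully around the cut $[1,\infty)$ and the branch point at $0$ shows the winding number is exactly $1$ when the inequality in (ii) holds and $0$ otherwise. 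The constraint $-|B|/2 < A \le |B|/2$ should emerge from requiring that the constant $4\pi^2 A$ is small enough (relative to the $2\pi B \cdot (\pm\pi)$ jumps in the imaginary part across the logarithm's cut) not to push the curve $\phi_{A,B}(|z|=\text{const})$ off of encircling the origin.

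The main obstacle I anticipate is making the winding-number bookkeeping rigorous near the two troublesome features: the branch cut $[1,\infty)$ of $\li$ and the branch point $0$ of $\log$. Near $0$ one has $\phi_{A,B}(z) \approx 4\pi^2 A + 2\pi i B \log z$, whose zero structure (one zero, at $z \approx \exp(2\pi i A/B)$, i.e. near a root of unity — consistent with the abstract's claim) must be matched to the global count; near $z=1$ the singularity $\li(z) - \pi^2/6 \sim (1-z)(\log(1-z) - 1)$ is mild but the argument still swings. I would handle these by excising small arcs and estimating $|\li(z)|$ against $|2\pi B \log z|$ on the complementary regions — essentially a Rouché-type comparison showing $\phi_{A,B}$ and $4\pi^2 A + 2\pi i B\log z$ have the same number of zeros — and then verifying simplicity of the zero via $\phi_{A,B}'(z) = -\log(1-z)/z + 2\pi i B/z = \bigl(2\pi i B - \log(1-z)\bigr)/z$, which vanishes only if $\log(1-z) = 2\pi i B$, a location one checks is incompatible with $\phi_{A,B}(z) = 0$ under the stated hypotheses.
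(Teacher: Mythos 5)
Your treatment of the $B=0$ case is sound and matches what the paper does in Section~3: show $\li$ is real and strictly increasing on $(-\infty,1]$ with range $(-\infty,\pi^2/6]$, and that $\Im\li(z)\neq 0$ off this ray, so $\phi_{A,0}$ has a unique (real, simple) zero iff $A\gqs 0$.

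For $B\neq 0$, however, you propose a winding-number/Rouch\'e argument that the paper itself considers at the end of Section~7 and explicitly leaves open: the author writes down the argument-principle count $\Psi(A,B)$ as a pair of integrals along the cuts, verifies the answer numerically for $|A|,|B|\lqs 50$, and then says ``Perhaps \eqref{resu} can be proved directly from \eqref{coni}.'' So the route you sketch is not what the paper does, and the paper's author signals that carrying it out is nontrivial. The paper instead works with the \emph{real} level curves: writing $z=re^{i\theta}$, it studies $I_\theta(r)=\Im\phi_{A,B}$ and $R_r(\theta)=\Re\phi_{A,B}$ separately (Section~5), shows $\partial I/\partial r>0$ and $\partial R/\partial\theta<0$, obtains smooth implicit curves $r=g(\theta)$ near $|z|=1$ and $\theta=h(r)$ near $\arg z=2\pi A/B$, bounds these curves into a small polar rectangle $\mathcal R_{A,B}$, and shows the two curves cross exactly once by a sign/monotonicity argument. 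The condition $-B/2<A\lqs B/2$ falls out cleanly from requiring $R_r(\pi)<0<R_r(-\pi)$, i.e.\ $-B/2<A+\Re\li(-r)/(4\pi^2)<B/2$ together with $|\Re\li(-r)|$ small.

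Concretely, the gaps in your sketch are: (a) the Rouch\'e comparison against $4\pi^2A+2\pi iB\log z$ fails precisely near $z=e^{2\pi iA/B}$, since there the comparison function is small while $\li(z)$ is of size $\approx\pi^2/6$; a workable contour and the matching estimates are not indicated and are not easy to produce. (b) The emergence of the exact inequality $-|B|/2<A\lqs|B|/2$ from the ``jumps across the cut'' is asserted heuristically, not derived. (c) On simplicity for $B\neq 0$, the situation is simpler than you suggest: $\phi'_{A,B}(z)=\bigl(2\pi iB-\log(1-z)\bigr)/z$ can \emph{never} vanish, because $\Im\log(1-z)\in(-\pi,\pi]$ and so $\log(1-z)=2\pi iB$ has no solution; there is nothing to check for compatibility with $\phi_{A,B}(z)=0$. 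In short, your $B=0$ argument is fine, but for $B\neq 0$ the proposal outlines a route the paper flags as unfinished, and the central counting step would need substantially more work to be made rigorous.
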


On the principal branch we will see, as noted earlier, that $\li(z)=\phi_{0,0}(z)$ has just  the zero at $z=0$.

\begin{theorem} \label{rhoa2}
For $A\in \Z_{\gqs 1}$, if $\phi_{A,0}(\rho)=0$ then
\begin{equation*}
    \rho =-\exp\left(\pi \sqrt{8A-1/3} \right) +O\left(1/\sqrt{A}\right).
\end{equation*}
With the initial value $-\exp\left(\pi \sqrt{8A-1/3} \right)$, Newton's method applied to $\phi_{A,0}$ produces a sequence converging quadratically to $\rho$.
\end{theorem}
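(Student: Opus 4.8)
The plan is to push everything onto the negative real axis by means of the inversion formula $\li(z)+\li(1/z)=-\pi^2/6-\tfrac12\log^2(-z)$, all of whose terms are real for $z\in(-\infty,0)$.

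First I would pin down $\rho$. Theorem~\ref{ebt}(i) already supplies a unique simple zero of $\phi_{A,0}$ in $\C$, and I claim it is real and negative. On $(-\infty,0]$ we have $\phi_{A,0}'(x)=\lip(x)=-\log(1-x)/x>0$, so $\phi_{A,0}$ is strictly increasing there, running from $\lim_{x\to-\infty}\phi_{A,0}(x)=-\infty$ (the inversion formula forces $\li(x)\to-\infty$) up to $\phi_{A,0}(0)=4\pi^2A>0$; meanwhile $\phi_{A,0}>0$ on $(0,1]$ and takes non‑real values on $(1,\infty)$. Hence the unique complex zero is this negative real one. Since $\li(\rho)=-4\pi^2A\to-\infty$ forces $\rho\to-\infty$, we get $1/\rho\to0^-$ and $\li(1/\rho)=1/\rho+O(1/\rho^2)=O(1/|\rho|)$.

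Next, the asymptotic. Putting $z=\rho$ in the inversion formula and using $\li(\rho)=-4\pi^2A$ gives $\log^2(-\rho)=\pi^2(8A-1/3)+O(1/|\rho|)$. Both sides are positive and $\log(-\rho)>0$, so taking a square root yields $\log(-\rho)=\pi\sqrt{8A-1/3}+O\!\bigl(1/(|\rho|\sqrt A)\bigr)$, hence $-\rho=\exp\!\bigl(\pi\sqrt{8A-1/3}\bigr)\bigl(1+O(1/(|\rho|\sqrt A))\bigr)$. Feeding the crude consequence $|\rho|\asymp\exp\!\bigl(\pi\sqrt{8A-1/3}\bigr)$ back into this error term turns the relative error into the stated absolute error $O(1/\sqrt A)$. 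I expect this short bootstrap to be the one genuinely delicate point of the estimate.

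Finally, the Newton iteration. Write $f=\phi_{A,0}$; then $f'=\lip>0$ on $(-\infty,0)$, and $f''(x)=x^{-2}\bigl(x/(1-x)+\log(1-x)\bigr)$ is $>0$ there, since $g(t):=\log(1+t)-t/(1+t)$ has $g(0)=0$ and $g'(t)=t/(1+t)^2>0$, so $f$ is increasing and strictly convex on $(-\infty,0)$. The inversion formula at the initial value $z_0:=-\exp\!\bigl(\pi\sqrt{8A-1/3}\bigr)$ gives the exact identity $f(z_0)=-\li(1/z_0)>0$ (as $1/z_0<0$ makes $\li(1/z_0)<0$), so $z_0>\rho$ by monotonicity. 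The classical behaviour of Newton's method on an increasing convex function then forces the iterates $z_{n+1}=z_n-f(z_n)/f'(z_n)$ to decrease monotonically to $\rho$, and the mean-value identity $z_{n+1}-\rho=\tfrac{f''(\xi_n)}{2f'(z_n)}(z_n-\rho)^2$ with $\xi_n\in(\rho,z_n)\subset(\rho,z_0)$ gives quadratic convergence once one bounds $f'$ below and $f''$ above on the interval $[\rho,z_0]$ (which has length $O(1/\sqrt A)$): there $f'\asymp\sqrt A\,\exp(-\pi\sqrt{8A-1/3})$ and $f''\asymp\sqrt A\,\exp(-2\pi\sqrt{8A-1/3})$, so $f''/(2f')=O\!\bigl(\exp(-\pi\sqrt{8A-1/3})\bigr)$ and the quadratic contraction operates from the first step. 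Beyond the bootstrap above, the only remaining care is assembling these explicit derivative bounds; the convexity computation and the monotone-Newton argument itself are routine.
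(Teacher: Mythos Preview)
Your argument is correct, and for the asymptotic estimate you follow essentially the same path as the paper: both invoke the inversion identity \eqref{dilog1} to get $\log^2(-\rho)=\pi^2(8A-1/3)+O(1/|\rho|)$ and then bootstrap, first using the trivial bound $|\rho|>1$ (from $\phi_{A,0}(-1)>0$) to obtain a crude exponential lower bound on $|\rho|$, and feeding that back in. The paper carries this out with explicit constants (Proposition~\ref{rhoa}), but the idea is identical.

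Where you genuinely diverge is in the Newton step. The paper never uses convexity: it proves a complex Taylor-remainder bound $|\li(z)-\li(c)-(z-c)\lip(c)|\le|z-c|^2/(2s)$ via a keyhole contour around the branch cut $[1,\infty)$ (Proposition~\ref{d0}), and from this derives $|\rho-c^*|<|\rho-c|^2/(2\pi\sqrt A)$ (Lemma~\ref{nwt0}) and then the explicit quadratic rate (Proposition~\ref{seq}). Your route instead stays entirely on the real line: you check $\phi_{A,0}''>0$ on $(-\infty,0)$, verify the clean identity $\phi_{A,0}(z_0)=-\li(1/z_0)>0$ so that $z_0>\rho$, and then invoke the classical monotone--convex Newton argument with the mean-value remainder. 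This is more elementary and arguably more transparent for this particular theorem; the paper's contour-integral bound, on the other hand, is a deliberate warm-up for the genuinely complex case $B\neq 0$ treated later (Propositions~\ref{d1}, \ref{d2}, Theorem~\ref{nwt}), where no real-variable convexity is available.
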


Note that by conjugation,  $\phi_{A,B}(\rho)=0$ if and only if  $\phi_{A,-B}(\overline\rho)=0$. So for $B\neq 0$ we may assume $B \gqs 1$ without loss of generality. The second Bernoulli polynomial is defined as $B_2(x):=x^2-x+1/6$ and   Clausen's integral is
\begin{equation}\label{clau}
\cl(\theta):=-\int_0^\theta \log |2\sin( x/2) | \, dx,
\end{equation}
both shown in Figure \ref{bfig}.

\begin{theorem} \label{new2}
Let $A$ and $B$ be integers satisfying $-B/2<A\lqs B/2$ and suppose $\phi_{A,B}(\rho)=0$.
The sequence $c_0$, $c_1$, $\cdots$
defined as
\begin{equation}\label{roov}
    c_0:= \begin{cases} \exp(2\pi iA/B)  & \text{if \ } A \neq 0 \\ \exp(\pi i/(12B)) & \text{if \ } A=0, \end{cases}
    \qquad \quad
    c_{n+1}:=c_n-\frac{\phi_{A,B}(c_n)}{\phi'_{A,B}(c_n)} \text{ \ \ \ for \ \ $n\gqs 0$},
\end{equation}
 converges quadratically to $\rho$. Using $c_1$, we have
\begin{equation} \label{fdr}
    \rho=e^{2\pi i A/B}\left( 1+\frac{-\cl(2\pi A/B)+i \pi^2 B_2(|A|/B)}{2\pi B}\right) +O\left(\frac{1+\log B}{B^2}\right)
\end{equation}
for an absolute implied constant.
\end{theorem}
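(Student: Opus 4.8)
The plan is to localize the zero near the root of unity $\zeta:=e^{2\pi iA/B}$ (when $A\neq0$) or near $c_0=e^{\pi i/(12B)}$ (when $A=0$), exploiting that for large $B$ the term $2\pi iB\log z$ dominates $\phi_{A,B}$ and $\phi'_{A,B}(z)$ is well approximated by $2\pi iB/z$. I would first record the exact formulas
\[
\phi'_{A,B}(z)=\frac{2\pi iB-\log(1-z)}{z},\qquad
\phi''_{A,B}(z)=\frac{z/(1-z)+\log(1-z)-2\pi iB}{z^2},
\]
the classical evaluation $\li(e^{i\theta})=\pi^2B_2(\theta/2\pi)+i\cl(\theta)$ valid for $0\le\theta\le2\pi$ — which, with conjugation, gives $\li(e^{2\pi iA/B})=\pi^2B_2(|A|/B)+i\cl(2\pi A/B)$ for every admissible $A$ — and the elementary bound $\cl(\theta)\ll\theta(1+|\log\theta|)$ as $\theta\to0^{+}$. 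Since $\log\zeta=2\pi iA/B$ on the principal branch, a one-line computation gives the clean value $\phi_{A,B}(\zeta)=\li(\zeta)$ when $A\neq0$, and $\phi_{0,B}(c_0)=\li(e^{\pi i/(12B)})-\pi^2/6=O\big((1+\log B)/B\big)$ when $A=0$.

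Next I would fix a disc $R=\{\,|z-c_0|\le r\,\}$ with $r$ a small multiple of $1/B$, chosen so that $R$ keeps away from $z=1$ — possible since $|1-c_0|\gg1/B$ in all cases, e.g. $|1-\zeta|=2|\sin(\pi A/B)|\ge4|A|/B$ for $A\neq0$ — and so that $r$ exceeds the Kantorovich radius below. On such an $R$ one has $|\log(1-z)|\ll1+\log B$, hence $|\phi'_{A,B}(z)|\asymp B$ (bounded above and below by absolute constants times $B$) and $|\phi''_{A,B}(z)|\ll B$; in particular $|\phi''_{A,B}/\phi'_{A,B}|\ll1$ on $R$. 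With $\eta:=|\phi_{A,B}(c_0)/\phi'_{A,B}(c_0)|\ll1/B$ (indeed $\ll(1+\log B)/B^2$ when $A=0$), $\beta:=1/|\phi'_{A,B}(c_0)|\ll1/B$ and $M:=\sup_R|\phi''_{A,B}|\ll B$, the Newton--Kantorovich quantity $\beta M\eta\ll1/B$ is eventually $<1/2$ and its associated radius is $\asymp\eta$, so for all $B$ beyond an explicit bound the iterates $c_n$ remain in $R$ and converge quadratically to a zero of $\phi_{A,B}$ in $R$, which by Theorem~\ref{ebt} is $\rho$. The finitely many remaining pairs $(A,B)$ are verified directly.

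For \eqref{fdr} I would use that $\rho\in R$ gives $|\rho-c_0|\ll1/B$, so quadratic convergence yields $|\rho-c_1|\ll|\rho-c_0|^2\ll1/B^2$. Expanding
\[
c_1=c_0\Big(1-\frac{\phi_{A,B}(c_0)}{2\pi iB-\log(1-c_0)}\Big)
=c_0\Big(1-\frac{\phi_{A,B}(c_0)}{2\pi iB}\Big)+O\Big(\tfrac{1+\log B}{B^2}\Big)
\]
and substituting, for $A\neq0$, $\phi_{A,B}(\zeta)=\li(\zeta)=\pi^2B_2(|A|/B)+i\cl(2\pi A/B)$ and $c_0=\zeta=e^{2\pi iA/B}$ (using $-1/(2\pi iB)=i/(2\pi B)$), the main term becomes exactly $e^{2\pi iA/B}\big(1+\tfrac{-\cl(2\pi A/B)+i\pi^2B_2(|A|/B)}{2\pi B}\big)$; combined with $|\rho-c_1|\ll1/B^2$ this is \eqref{fdr}. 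When $A=0$ the same main term collapses to $1+i\pi/(12B)$ (since $B_2(0)=1/6$, $\cl(0)=0$), while $c_0=e^{\pi i/(12B)}=1+i\pi/(12B)+O(1/B^2)$ and $|\rho-c_0|\le|\rho-c_1|+|c_1-c_0|\ll(1+\log B)/B^2$, so \eqref{fdr} holds there too.

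The main obstacle is keeping every estimate uniform in the pair $(A,B)$ as $B\to\infty$: in particular pinning down the lower bound for $|1-z|$ over $R$ — which is what prevents $\phi'_{A,B}$ from vanishing and what bounds $\log(1-z)$ — and calibrating the radius $r\asymp1/B$ so that $R$ simultaneously misses the branch point $z=1$, stays off the branch cut of $\log$ near $z=-1$ (relevant only when $A=B/2$, where $c_0=-1$ lies on the cut but the first Newton step moves into $\Im z>0$), and still contains the whole Newton orbit. The case $A=0$ is genuinely different — there the root of unity $\zeta=1$ is itself the branch point — and the tailored choice $c_0=e^{\pi i/(12B)}$ must be carried through separately.
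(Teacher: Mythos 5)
Your proposal is sound in outline and the derivation of \eqref{fdr} is essentially the paper's own computation: the identity $\phi_{A,B}(c_0)=\li(e^{2\pi iA/B})=\pi^2B_2(|A|/B)+i\cl(2\pi A/B)$, the expansion of $c_1=c_0\bigl(1-\phi_{A,B}(c_0)/(2\pi iB-\log(1-c_0))\bigr)$ to first order in $1/B$, and the treatment of $A=0$ via $c_0=e^{\pi i/(12B)}$ and $\cl(\theta)\ll\theta(1+|\log\theta|)$ all match Theorem~\ref{new3}. The main difference is the convergence framework. The paper works not in a disc but in a convexified polar rectangle $\mathcal R'_{A,B}$ (built from Theorem~\ref{cal}), bounds the Taylor remainder by a contour-shifting argument (Propositions~\ref{d1},~\ref{d2}), and thereby gets explicit absolute contraction constants valid for \emph{every} $B\gqs1$, yielding the clean bound $|\rho-c_n|<1.25(0.95)^{2^n}$. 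Your Kantorovich-in-a-disc version is conceptually simpler but only kicks in for $B$ past an explicit threshold, with the remaining pairs waved away to ``direct verification.'' That is a real loss of uniformity, and the verification step is not nothing: for each small $(A,B)$ you must still produce a region containing both $c_0$ and $\rho$ on which the contraction estimate holds, which is essentially the problem being solved.

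There is also a concrete geometric gap in the disc approach that the paper's polar-rectangle region is designed to sidestep. When $A=B/2$ the center $c_0=-1$ sits on the branch cut $(-\infty,0]$ of $\log$, so any Euclidean disc $R$ about $c_0$ straddles the cut and $\phi_{A,B}$ (and your bounds $|\phi'_{A,B}|\asymp B$, $|\phi''_{A,B}|\ll B$) are not defined or do not hold continuously on $R$. Your observation that the first Newton step moves into $\Im z>0$ is correct but does not rescue the Kantorovich hypotheses, which require smoothness of $\phi_{A,B}$ on a fixed neighborhood of $c_0$; you would need to replace $R$ by a region contained in the closed upper half-plane (as the paper does with $\mathcal R'_{B/2,B}$, which has $\theta\in[\pi-\pi/(8B),\pi]$). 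A similar care is needed with the cut $[1,\infty)$ when $A=0$, where $|1-c_0|\asymp1/B$ is tight against the permissible radius. These are fixable, but as written the disc-based argument does not quite close.
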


The above three theorems are proved in Sections \ref{33} -- \ref{77} after some preliminary results are reviewed in Section \ref{dilogg}.
The dilogarithm is the case $s=2$ of the polylogarithm $\pl_s(z)$.
We put our results in context in Section \ref{polyl} by describing what is known about the zeros of $\pl_s(z)$ for a general fixed $s\in \C$. It turns out that for $\Re(s) \lqs 0$ very much is known, at least for zeros on the principal branch, due to work of Le Roy, Frobenius, Reisz, Peyerimhoff, Sobolev, Gawronski and Stadtm\"uller among others.

\subsection{Rademacher's conjecture} \label{rade}
We briefly describe here the work that led to our study of dilogarithm zeros.
Rademacher conjectured in \cite[p. 302]{Ra} that the  coefficients $C_{hk\ell}(N)$ in the partial fraction decomposition
\begin{equation}\label{tp}
\prod_{j=1}^N \frac{1}{1-q^j}= \frac{C_{011}(N)}{q-1}+ \cdots + \frac{C_{hk\ell}(N)}{(q-e^{2\pi ih/k})^\ell}+ \cdots +\frac{C_{01N}(N)}{(q-1)^N}
\end{equation}
 converge as $N \to \infty$ to the corresponding partial fraction coefficients of the infinite product $\prod_{j=1}^\infty 1/(1-q^j)$. Rademacher had previously given a partial fraction decomposition of this infinite product using his famous exact formula for the partition function $p(n)$, of which it is the generating function, see \cite[pp. 292 - 302]{Ra}. Of course, $\prod_{j=1}^N 1/(1-q^j)$ is the generating function for $p_N(n)$, the number of partitions of $n$ into at most $N$ parts.

Sills and Zeilberger in \cite{SZ} obtained numerical evidence that the conjecture was not correct and in \cite{OS} the true asymptotic behavior of $C_{011}(N)$ was conjectured to be
\begin{equation}\label{conjj1}
   C_{011}(N)=\Re\left[(-2  z_0 e^{-\pi i z_0})\frac{w_0^{-N}}{N^2}\right] +O\left( \frac{|w_0|^{-N}}{N^3}\right)
\end{equation}
for $w_0$ the dilogarithm zero satisfying $\phi_{0,-1}(w_0)=0$ and $z_0$ given by $ w_0=1-e^{2\pi i z_0}$ for $1/2 < \Re(z_0) < 3/2$.  With Theorem \ref{new2} we find
\begin{align*}
    w_0 & \approx 0.91619781620686260140 - 0.18245889720714117505 i, \\
    z_0 & \approx 1.18147496973270876764 + 0.25552764641754743773 i.
\end{align*}
Then $|w_0|<1$, (see the conjugate of $w_0$ in Figure \ref{rfig}), and \eqref{conjj1} implies that $C_{011}(N)$ oscillates with exponentially growing amplitude and diverges. Slightly weaker forms of \eqref{conjj1}, enough to disprove Rademacher's conjecture, were independently shown in \cite{DrGe} and \cite{OS1} with different proofs, but both employing the saddle-point method. The formulation of the main term in \eqref{conjj1} is a little different in \cite{DrGe}.

The proof in \cite{OS1} is based on breaking up $C_{011}(N)$ into manageable components that are similar to Sylvester waves.
 Let $w(A,B)$ denote the zero of $\phi_{A,B}$ when it exists. The appearance of the dilogarithm zero $w_0=w(0,-1)$ in \cite{OS1}, as well as $w(0,-2)$ and $w(1,-3)$, comes from estimating the following sums \eqref{com1}, \eqref{com2} and \eqref{com3} which correspond to some of the largest components of $C_{011}(N)$.  Set
\begin{equation*}
    Q_{hk1}(N):=2\pi i \res_{z=h/k} \frac{  e^{2\pi i  z}}{(1-e^{2\pi i 1 z})(1-e^{2\pi i2 z}) \cdots (1-e^{2\pi i N z})}
\end{equation*}
and
\begin{equation*}
   z_1:=2+\log \bigl(1-w(0,-2)\bigr)/(2\pi i),  \qquad z_3:=3+\log \bigl(1-w(1,-3)\bigr)/(2\pi i).
\end{equation*}
Then it is proved in \cite{OS1,OS2} that
\begin{align}
    2\Re \sum_{N/2<k\lqs N} Q_{1k1}(N) & = \Re\left[2  z_0 e^{-\pi i z_0}\frac{w(0,-1)^{-N}}{N^{2}} \right] + O\left(\frac{|w(0,-1)|^{-N}}{N^{3}}\right), \label{com1}\\
    2\Re \sum_{N/3<k\lqs N/2} Q_{1k1}(N)  & = \Re\left[-\frac{3z_1}{2} e^{-\pi i z_1}\frac{w(0,-2)^{-N}}{N^{2}}  \right] + O\left(\frac{|w(0,-2)|^{-N}}{N^{3}}\right), \label{com2}\\
    2\Re \sum_{N/2<k\lqs N, \ k \text{ odd}} Q_{2k1}(N)  & = \Re\left[-\frac{z_3}{4} e^{-\pi i z_3} \frac{w(1,-3)^{-N}}{N^{2}} \right] + O\left(\frac{|w(1,-3)|^{-N}}{N^{3}}\right). \label{com3}
\end{align}
In fact these are special cases of the results in \cite{OS1,OS2}. See Sections 1, 6 of \cite{OS1} for a more detailed account.
In forthcoming work we also show that $w_0=w(0,-1)$ similarly controls the asymptotics of the first Sylvester waves.

\section{Some properties of the dilogarithm} \label{dilogg}

We have seen that $\phi_{A,B}(z)$ is defined as a single-valued function on $\C$. Away from the cuts $(-\infty,0]$ and $[1,\infty)$ it is holomorphic with
\begin{equation}
    z\phi'_{A,B}(z) = -\log(1-z)+2\pi i B \qquad \text{for} \qquad z \not\in (-\infty,0] \cup [1,\infty).\label{dphi}
\end{equation}
We will also use \eqref{dphi} for $z \in \C-[1,\infty)$ when $B=0$.
For $x\in (-\infty,0)$, as usual,
\begin{equation}\label{wind}
    \lim_{y \to 0^+} \log(x+i y) = \log(x), \qquad \lim_{y \to 0^-} \log(x+i y) = \log(x) -2\pi i.
\end{equation}
Similarly, for $x\in [1,\infty)$
\begin{equation}\label{wind2}
    \lim_{y \to 0^+} \li(x+i y) = \li(x)+2\pi i\log(x), \qquad \lim_{y \to 0^-} \li(x+i y) = \li(x).
\end{equation}
Thus we see that if $z$ makes a full rotation in the positive direction about $0$ then $\phi_{A,B}\to \phi_{A+B,B}$. A  full rotation in the negative direction  about $1$ means
$\phi_{A,B}\to \phi_{A,B+1}$. We have the natural matrix representations
\begin{align}
\begin{pmatrix} A \\ B \\ 1 \end{pmatrix} & \longrightarrow
    \begin{pmatrix}
    1 & 1 & 0 \\
    0 & 1 & 0 \\
    0 & 0 & 1
    \end{pmatrix}
    \begin{pmatrix} A \\ B \\ 1 \end{pmatrix}
     = \begin{pmatrix} A+B \\ B \\ 1 \end{pmatrix}, \label{m0}
    \\
    \begin{pmatrix} A \\ B \\ 1 \end{pmatrix}  & \longrightarrow
    \begin{pmatrix}
    1 & 0 & 0 \\
    0 & 1 & 1 \\
    0 & 0 & 1
    \end{pmatrix}
    \begin{pmatrix} A \\ B \\ 1 \end{pmatrix}
     = \begin{pmatrix} A \\ B+1 \\ 1 \end{pmatrix} \label{m1}
\end{align}
and the $3 \times 3$ matrices in \eqref{m0}, \eqref{m1}  generate the well-known monodromy group of the dilogarithm. This is
 the non-abelian Heisenberg group
\begin{equation*}
    H_3(\Z):=\left\{ \left. \begin{pmatrix}
    1 & x & z \\
    0 & 1 & y \\
    0 & 0 & 1
    \end{pmatrix} \ \right| \ x,y,z \in \Z \right\}
\end{equation*}
 as described in \cite[p. 244]{Ve}, for example.

For all $z \in \C$, except for the given restrictions, the dilogarithm  satisfies the functional equations
\begin{alignat}{2}\label{dilog1}
    \li(1/z) & =-\li(z)-\li(1)-\frac 12 \log^2(-z) \quad \quad & & z \not\in [0,1),\\
    \li(1-z) & =-\li(z)+\li(1)- \log(z)\log(1-z) \quad \quad & & z \neq 0,1. \label{dilog2}
\end{alignat}
Note that $\li(1)=\zeta(2)=\pi^2/6$. Equations \eqref{dilog1} and \eqref{dilog2} are shown in \cite[Sect. 3]{max}, for example, for $z \not\in  [0,\infty)$ and $z \not\in (-\infty,0] \cup [1,\infty)$ respectively. Then use \eqref{wind}, \eqref{wind2} to obtain \eqref{dilog1} and \eqref{dilog2}. For the excluded $z$ values, \eqref{dilog1} becomes
\begin{equation*}
    \li(1/x) -2\pi i \log(x) =-\li(x)-\li(1)-\frac 12 \log^2(-x) \quad \quad  x \in (0,1).
\end{equation*}

Recall $B_2(x)=x^2-x+1/6$ and $\cl(\theta)$ from \eqref{clau}. As in \cite[Sect. 8]{max} we have
\begin{alignat}{2}
    \Re(\li(e^{2\pi i x}) ) & = \sum_{n=1}^\infty \frac{\cos(2\pi n x)}{n^2} = \pi^2 B_2(x-\lfloor x \rfloor) \qquad & & (x\in \R) \label{imc2}\\
    \Im(\li(e^{2\pi i x}) ) & = \sum_{n=1}^\infty \frac{\sin(2\pi n x)}{n^2} = \cl(2\pi x) \qquad & & (x\in \R). \label{imc}
\end{alignat}


\SpecialCoor
\psset{griddots=5,subgriddiv=0,gridlabels=0pt}
\psset{xunit=0.8cm, yunit=0.5cm}
\psset{linewidth=1pt}
\psset{dotsize=4pt 0,dotstyle=*}

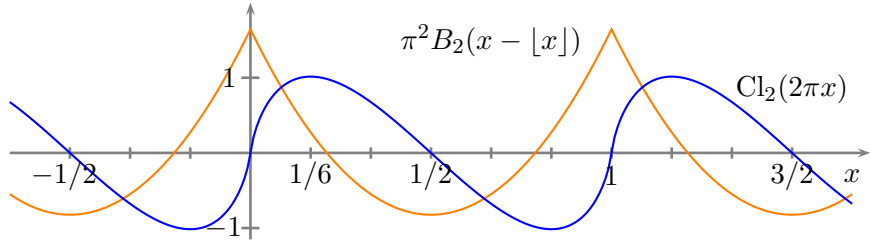
\begin{figure}[h]
\begin{center}
\begin{pspicture}(-5,-2.1)(11,3.3) 

\savedata{\mydata}[
{{-4., 1.35326}, {-3.95, 1.29496}, {-3.9, 1.23517}, {-3.85,
  1.17398}, {-3.8, 1.11149}, {-3.75, 1.04778}, {-3.7,
  0.98293}, {-3.65, 0.917028}, {-3.6, 0.850156}, {-3.55,
  0.782392}, {-3.5, 0.713817}, {-3.45, 0.644506}, {-3.4,
  0.574538}, {-3.35, 0.503986}, {-3.3, 0.432927}, {-3.25,
  0.361433}, {-3.2, 0.289578}, {-3.15, 0.217435}, {-3.1,
  0.145077}, {-3.05,
  0.0725742}, {-3., 0}, {-2.95, -0.0725742}, {-2.9,
-0.145077}, {-2.85, -0.217435}, {-2.8, -0.289578}, {-2.75,
-0.361433}, {-2.7, -0.432927}, {-2.65, -0.503986}, {-2.6, -0.574538},
{-2.55, -0.644506}, {-2.5, -0.713817}, {-2.45, -0.782392}, {-2.4,
-0.850156}, {-2.35, -0.917028}, {-2.3, -0.98293}, {-2.25, -1.04778},
{-2.2, -1.11149}, {-2.15, -1.17398}, {-2.1, -1.23517}, {-2.05,
-1.29496}, {-2., -1.35326}, {-1.95, -1.40997}, {-1.9, -1.46501},
{-1.85, -1.51826}, {-1.8, -1.56963}, {-1.75, -1.61901}, {-1.7,
-1.66628}, {-1.65, -1.71134}, {-1.6, -1.75405}, {-1.55, -1.79429},
{-1.5, -1.83193}, {-1.45, -1.86683}, {-1.4, -1.89884}, {-1.35,
-1.9278}, {-1.3, -1.95355}, {-1.25, -1.97592}, {-1.2, -1.99471},
{-1.15, -2.00973}, {-1.1, -2.02075}, {-1.05, -2.02756}, {-1.,
-2.02988}, {-0.95, -2.02746}, {-0.9, -2.01998}, {-0.85, -2.00713},
{-0.8, -1.98852}, {-0.75, -1.96374}, {-0.7, -1.93235}, {-0.65,
-1.8938}, {-0.6, -1.84751}, {-0.55, -1.79277}, {-0.5, -1.72876},
{-0.45, -1.6545}, {-0.4, -1.5688}, {-0.35, -1.47016}, {-0.3,
-1.35668}, {-0.25, -1.22581}, {-0.2, -1.07398}, {-0.15, -0.895776},
{-0.1, -0.682065}, {-0.05, -0.413607}, {0., 0.}, {0.05,
  0.413607}, {0.1, 0.682065}, {0.15, 0.895776}, {0.2, 1.07398}, {0.25,
   1.22581}, {0.3, 1.35668}, {0.35, 1.47016}, {0.4, 1.5688}, {0.45,
  1.6545}, {0.5, 1.72876}, {0.55, 1.79277}, {0.6, 1.84751}, {0.65,
  1.8938}, {0.7, 1.93235}, {0.75, 1.96374}, {0.8, 1.98852}, {0.85,
  2.00713}, {0.9, 2.01998}, {0.95, 2.02746}, {1., 2.02988}, {1.05,
  2.02756}, {1.1, 2.02075}, {1.15, 2.00973}, {1.2, 1.99471}, {1.25,
  1.97592}, {1.3, 1.95355}, {1.35, 1.9278}, {1.4, 1.89884}, {1.45,
  1.86683}, {1.5, 1.83193}, {1.55, 1.79429}, {1.6, 1.75405}, {1.65,
  1.71134}, {1.7, 1.66628}, {1.75, 1.61901}, {1.8, 1.56963}, {1.85,
  1.51826}, {1.9, 1.46501}, {1.95, 1.40997}, {2., 1.35326}, {2.05,
  1.29496}, {2.1, 1.23517}, {2.15, 1.17398}, {2.2, 1.11149}, {2.25,
  1.04778}, {2.3, 0.98293}, {2.35, 0.917028}, {2.4, 0.850156}, {2.45,
  0.782392}, {2.5, 0.713817}, {2.55, 0.644506}, {2.6,
  0.574538}, {2.65, 0.503986}, {2.7, 0.432927}, {2.75,
  0.361433}, {2.8, 0.289578}, {2.85, 0.217435}, {2.9,
  0.145077}, {2.95, 0.0725742}, {3.,
  0}, {3.05, -0.0725742}, {3.1, -0.145077}, {3.15,
-0.217435}, {3.2, -0.289578}, {3.25, -0.361433}, {3.3, -0.432927},
{3.35, -0.503986}, {3.4, -0.574538}, {3.45, -0.644506}, {3.5,
-0.713817}, {3.55, -0.782392}, {3.6, -0.850156}, {3.65, -0.917028},
{3.7, -0.98293}, {3.75, -1.04778}, {3.8, -1.11149}, {3.85, -1.17398},
{3.9, -1.23517}, {3.95, -1.29496}, {4., -1.35326}, {4.05, -1.40997},
{4.1, -1.46501}, {4.15, -1.51826}, {4.2, -1.56963}, {4.25, -1.61901},
{4.3, -1.66628}, {4.35, -1.71134}, {4.4, -1.75405}, {4.45, -1.79429},
{4.5, -1.83193}, {4.55, -1.86683}, {4.6, -1.89884}, {4.65, -1.9278},
{4.7, -1.95355}, {4.75, -1.97592}, {4.8, -1.99471}, {4.85, -2.00973},
{4.9, -2.02075}, {4.95, -2.02756}, {5., -2.02988}, {5.05, -2.02746},
{5.1, -2.01998}, {5.15, -2.00713}, {5.2, -1.98852}, {5.25, -1.96374},
{5.3, -1.93235}, {5.35, -1.8938}, {5.4, -1.84751}, {5.45, -1.79277},
{5.5, -1.72876}, {5.55, -1.6545}, {5.6, -1.5688}, {5.65, -1.47016},
{5.7, -1.35668}, {5.75, -1.22581}, {5.8, -1.07398}, {5.85,
-0.895776}, {5.9, -0.682065}, {5.95, -0.413607}, {6.,
0}, {6.05, 0.413607}, {6.1, 0.682065}, {6.15,
  0.895776}, {6.2, 1.07398}, {6.25, 1.22581}, {6.3, 1.35668}, {6.35,
  1.47016}, {6.4, 1.5688}, {6.45, 1.6545}, {6.5, 1.72876}, {6.55,
  1.79277}, {6.6, 1.84751}, {6.65, 1.8938}, {6.7, 1.93235}, {6.75,
  1.96374}, {6.8, 1.98852}, {6.85, 2.00713}, {6.9, 2.01998}, {6.95,
  2.02746}, {7., 2.02988}, {7.05, 2.02756}, {7.1, 2.02075}, {7.15,
  2.00973}, {7.2, 1.99471}, {7.25, 1.97592}, {7.3, 1.95355}, {7.35,
  1.9278}, {7.4, 1.89884}, {7.45, 1.86683}, {7.5, 1.83193}, {7.55,
  1.79429}, {7.6, 1.75405}, {7.65, 1.71134}, {7.7, 1.66628}, {7.75,
  1.61901}, {7.8, 1.56963}, {7.85, 1.51826}, {7.9, 1.46501}, {7.95,
  1.40997}, {8., 1.35326}, {8.05, 1.29496}, {8.1, 1.23517}, {8.15,
  1.17398}, {8.2, 1.11149}, {8.25, 1.04778}, {8.3, 0.98293}, {8.35,
  0.917028}, {8.4, 0.850156}, {8.45, 0.782392}, {8.5,
  0.713817}, {8.55, 0.644506}, {8.6, 0.574538}, {8.65,
  0.503986}, {8.7, 0.432927}, {8.75, 0.361433}, {8.8,
  0.289578}, {8.85, 0.217435}, {8.9, 0.145077}, {8.95,
  0.0725742}, {9.,
  0}, {9.05, -0.0725742}, {9.1, -0.145077}, {9.15,
-0.217435}, {9.2, -0.289578}, {9.25, -0.361433}, {9.3, -0.432927},
{9.35, -0.503986}, {9.4, -0.574538}, {9.45, -0.644506}, {9.5,
-0.713817}, {9.55, -0.782392}, {9.6, -0.850156}, {9.65, -0.917028},
{9.7, -0.98293}, {9.75, -1.04778}, {9.8, -1.11149}, {9.85, -1.17398},
{9.9, -1.23517}, {9.95, -1.29496}, {10., -1.35326}}
]

\savedata{\mydatb}[
{{-4., -1.09662}, {-3.95, -1.15008}, {-3.9, -1.2008}, {-3.85,
-1.24878}, {-3.8, -1.29401}, {-3.75, -1.33651}, {-3.7, -1.37626},
{-3.65, -1.41327}, {-3.6, -1.44754}, {-3.55, -1.47907}, {-3.5,
-1.50786}, {-3.45, -1.5339}, {-3.4, -1.5572}, {-3.35, -1.57777},
{-3.3, -1.59559}, {-3.25, -1.61066}, {-3.2, -1.623}, {-3.15,
-1.6326}, {-3.1, -1.63945}, {-3.05, -1.64356}, {-3., -1.64493},
{-2.95, -1.64356}, {-2.9, -1.63945}, {-2.85, -1.6326}, {-2.8,
-1.623}, {-2.75, -1.61066}, {-2.7, -1.59559}, {-2.65, -1.57777},
{-2.6, -1.5572}, {-2.55, -1.5339}, {-2.5, -1.50786}, {-2.45,
-1.47907}, {-2.4, -1.44754}, {-2.35, -1.41327}, {-2.3, -1.37626},
{-2.25, -1.33651}, {-2.2, -1.29401}, {-2.15, -1.24878}, {-2.1,
-1.2008}, {-2.05, -1.15008}, {-2., -1.09662}, {-1.95, -1.04042},
{-1.9, -0.981477}, {-1.85, -0.919792}, {-1.8, -0.855366}, {-1.75,
-0.788198}, {-1.7, -0.718288}, {-1.65, -0.645637}, {-1.6, -0.570244},
{-1.55, -0.492109}, {-1.5, -0.411234}, {-1.45, -0.327616}, {-1.4,
-0.241257}, {-1.35, -0.152156}, {-1.3, -0.0603142}, {-1.25,
  0.0342695}, {-1.2, 0.131595}, {-1.15, 0.231662}, {-1.1,
  0.33447}, {-1.05, 0.44002}, {-1., 0.548311}, {-0.95,
  0.659344}, {-0.9, 0.773119}, {-0.85, 0.889635}, {-0.8,
  1.00889}, {-0.75, 1.13089}, {-0.7, 1.25563}, {-0.65,
  1.38312}, {-0.6, 1.51334}, {-0.55, 1.6463}, {-0.5, 1.78201}, {-0.45,
   1.92046}, {-0.4, 2.06165}, {-0.35, 2.20558}, {-0.3,
  2.35226}, {-0.25, 2.50167}, {-0.2, 2.65383}, {-0.15,
  2.80872}, {-0.1, 2.96636}, {-0.05, 3.12675}, {0., 3.28987}, {0.05,
  3.12675}, {0.1, 2.96636}, {0.15, 2.80872}, {0.2, 2.65383}, {0.25,
  2.50167}, {0.3, 2.35226}, {0.35, 2.20558}, {0.4, 2.06165}, {0.45,
  1.92046}, {0.5, 1.78201}, {0.55, 1.6463}, {0.6, 1.51334}, {0.65,
  1.38312}, {0.7, 1.25563}, {0.75, 1.13089}, {0.8, 1.00889}, {0.85,
  0.889635}, {0.9, 0.773119}, {0.95, 0.659344}, {1., 0.548311}, {1.05,
   0.44002}, {1.1, 0.33447}, {1.15, 0.231662}, {1.2, 0.131595}, {1.25,
   0.0342695}, {1.3, -0.0603142}, {1.35, -0.152156}, {1.4,
-0.241257}, {1.45, -0.327616}, {1.5, -0.411234}, {1.55, -0.492109},
{1.6, -0.570244}, {1.65, -0.645637}, {1.7, -0.718288}, {1.75,
-0.788198}, {1.8, -0.855366}, {1.85, -0.919792}, {1.9, -0.981477},
{1.95, -1.04042}, {2., -1.09662}, {2.05, -1.15008}, {2.1, -1.2008},
{2.15, -1.24878}, {2.2, -1.29401}, {2.25, -1.33651}, {2.3, -1.37626},
{2.35, -1.41327}, {2.4, -1.44754}, {2.45, -1.47907}, {2.5, -1.50786},
{2.55, -1.5339}, {2.6, -1.5572}, {2.65, -1.57777}, {2.7, -1.59559},
{2.75, -1.61066}, {2.8, -1.623}, {2.85, -1.6326}, {2.9, -1.63945},
{2.95, -1.64356}, {3., -1.64493}, {3.05, -1.64356}, {3.1, -1.63945},
{3.15, -1.6326}, {3.2, -1.623}, {3.25, -1.61066}, {3.3, -1.59559},
{3.35, -1.57777}, {3.4, -1.5572}, {3.45, -1.5339}, {3.5, -1.50786},
{3.55, -1.47907}, {3.6, -1.44754}, {3.65, -1.41327}, {3.7, -1.37626},
{3.75, -1.33651}, {3.8, -1.29401}, {3.85, -1.24878}, {3.9, -1.2008},
{3.95, -1.15008}, {4., -1.09662}, {4.05, -1.04042}, {4.1, -0.981477},
{4.15, -0.919792}, {4.2, -0.855366}, {4.25, -0.788198}, {4.3,
-0.718288}, {4.35, -0.645637}, {4.4, -0.570244}, {4.45, -0.492109},
{4.5, -0.411234}, {4.55, -0.327616}, {4.6, -0.241257}, {4.65,
-0.152156}, {4.7, -0.0603142}, {4.75, 0.0342695}, {4.8,
  0.131595}, {4.85, 0.231662}, {4.9, 0.33447}, {4.95, 0.44002}, {5.,
  0.548311}, {5.05, 0.659344}, {5.1, 0.773119}, {5.15,
  0.889635}, {5.2, 1.00889}, {5.25, 1.13089}, {5.3, 1.25563}, {5.35,
  1.38312}, {5.4, 1.51334}, {5.45, 1.6463}, {5.5, 1.78201}, {5.55,
  1.92046}, {5.6, 2.06165}, {5.65, 2.20558}, {5.7, 2.35226}, {5.75,
  2.50167}, {5.8, 2.65383}, {5.85, 2.80872}, {5.9, 2.96636}, {5.95,
  3.12675}, {6., 3.28987}, {6.05, 3.12675}, {6.1, 2.96636}, {6.15,
  2.80872}, {6.2, 2.65383}, {6.25, 2.50167}, {6.3, 2.35226}, {6.35,
  2.20558}, {6.4, 2.06165}, {6.45, 1.92046}, {6.5, 1.78201}, {6.55,
  1.6463}, {6.6, 1.51334}, {6.65, 1.38312}, {6.7, 1.25563}, {6.75,
  1.13089}, {6.8, 1.00889}, {6.85, 0.889635}, {6.9, 0.773119}, {6.95,
  0.659344}, {7., 0.548311}, {7.05, 0.44002}, {7.1, 0.33447}, {7.15,
  0.231662}, {7.2, 0.131595}, {7.25,
  0.0342695}, {7.3, -0.0603142}, {7.35, -0.152156}, {7.4, -0.241257},
{7.45, -0.327616}, {7.5, -0.411234}, {7.55, -0.492109}, {7.6,
-0.570244}, {7.65, -0.645637}, {7.7, -0.718288}, {7.75, -0.788198},
{7.8, -0.855366}, {7.85, -0.919792}, {7.9, -0.981477}, {7.95,
-1.04042}, {8., -1.09662}, {8.05, -1.15008}, {8.1, -1.2008}, {8.15,
-1.24878}, {8.2, -1.29401}, {8.25, -1.33651}, {8.3, -1.37626}, {8.35,
-1.41327}, {8.4, -1.44754}, {8.45, -1.47907}, {8.5, -1.50786}, {8.55,
-1.5339}, {8.6, -1.5572}, {8.65, -1.57777}, {8.7, -1.59559}, {8.75,
-1.61066}, {8.8, -1.623}, {8.85, -1.6326}, {8.9, -1.63945}, {8.95,
-1.64356}, {9., -1.64493}, {9.05, -1.64356}, {9.1, -1.63945}, {9.15,
-1.6326}, {9.2, -1.623}, {9.25, -1.61066}, {9.3, -1.59559}, {9.35,
-1.57777}, {9.4, -1.5572}, {9.45, -1.5339}, {9.5, -1.50786}, {9.55,
-1.47907}, {9.6, -1.44754}, {9.65, -1.41327}, {9.7, -1.37626}, {9.75,
-1.33651}, {9.8, -1.29401}, {9.85, -1.24878}, {9.9, -1.2008}, {9.95,
-1.15008}, {10., -1.09662}}
]

\psline[linecolor=gray]{->}(-4,0)(10.3,0)
\psline[linecolor=gray]{->}(0,-2.3)(0,4)
\psline[linecolor=gray](-0.15,2)(0.15,2)
\psline[linecolor=gray](-0.15,-2)(0.15,-2)
\multirput(-3,-0.15)(1,0){13}{\psline[linecolor=gray](0,0)(0,0.3)}

\dataplot[linecolor=orange,linewidth=0.8pt,plotstyle=line]{\mydatb}
\dataplot[linecolor=blue,linewidth=0.8pt,plotstyle=line]{\mydata}

\rput(3,-0.6){$1/2$}
\rput(-3.1,-0.6){$-1/2$}
\rput(6,-0.6){$1$}
\rput(9,-0.6){$3/2$}
\rput(10,-0.6){$x$}
\rput(1,-0.6){$1/6$}
\rput(-0.35,2){$1$}
\rput(-0.45,-2){$-1$}
\rput(9,1.7){$\cl(2\pi x)$}
\rput(4,3){$\pi^2 B_2(x-\lfloor x \rfloor)$}


\end{pspicture}
\caption{Real and imaginary parts of $\li(e^{2\pi i x})$}\label{bfig}
\end{center}
\end{figure}

\noindent
The function $\cl(\theta)$ is odd, has period $2\pi$ and satisfies $2\cl(\theta) - 2\cl(\pi - \theta) = \cl(2\theta)$.
Then
\begin{equation}\label{byo}
\clp(\theta)=- \log |2\sin( \theta/2) |, \qquad \clpp(\theta)=-\frac 12 \cot(\theta/2)
\end{equation}
and  $|\cl(\theta)|$ has its maximum at $\theta=\pi/3$, for example, with maximum value
\begin{equation}\label{kap}
\kappa := \sum_{n=1}^\infty \frac{\sin(\pi n/3)}{n^2} \approx 1.0149416.
\end{equation}

From \cite[(11.1)]{Ra},
\begin{equation}\label{cotr}
\cot(z) = \frac 1z - \sum_{n=1}^\infty \frac{2^{2n}|B_{2n}|}{(2n)!} z^{2n-1} \qquad (0<|z|<\pi)
\end{equation}
for $B_{2n}$  the Bernoulli number.
Integrating \eqref{cotr} twice we find
\begin{equation}\label{serc}
\cl(\theta) = \theta - \theta \log |\theta| + \sum_{n=1}^\infty \frac{|B_{2n}|}{2n (2n+1)!} \theta^{2n+1} \qquad (-2\pi< \theta < 2\pi)
\end{equation}
and as in \cite[Lemma 3.7]{OS1} it follows from \eqref{serc} that
\begin{equation}\label{dtb}
|\cl(\theta)| \lqs |\theta|  - |\theta|  \log |\theta|  +  |\theta| ^{3}/54 \qquad (-\pi < \theta < \pi).
\end{equation}

\section{Zeros of $\phi_{A,0}$} \label{33}
In this section we locate the zeros of
$\phi_{A,0}(z)=\li(z)+4\pi^2 A$.

\begin{lemma} \label{im0}
We have $\Im \li(z)=0$ if and only if $z \in (-\infty, 1]$.
\end{lemma}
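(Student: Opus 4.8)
The plan is to extract the sign of $\Im\li(z)$ from an integral representation. Substituting $u=tz$ in \eqref{li_def} gives
\[
\li(z)=-\int_0^1 \log(1-tz)\,\frac{dt}{t},
\]
and the first thing to verify is that this holds for every $z\notin[1,\infty)$ with $\log$ the principal branch. The key observation is that if $tz\in[1,\infty)$ for some $t\in(0,1]$ then $z=(tz)/t$ would itself lie in $[1,\infty)$; hence for $z\notin[1,\infty)$ the whole segment $\{tz:0\lqs t\lqs1\}$ avoids the cut $[1,\infty)$, so $1-tz$ never meets $(-\infty,0]$ and $\log(1-tz)$ is the continuous principal branch along the segment, equal to $\log 1=0$ at $t=0$. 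Since $\log(1-tz)/t\to-z$ as $t\to0^+$, the integral converges.

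Next I would take imaginary parts: $\Im\li(z)=-\int_0^1\arg(1-tz)\,\frac{dt}{t}$ with $\arg\in(-\pi,\pi)$. For the direction $z\in(-\infty,1]\Rightarrow\Im\li(z)=0$: when $z\in(-\infty,1)$ we have $1-tz\in(0,\infty)$ for all $t\in[0,1]$, so $\arg(1-tz)\equiv0$ and $\Im\li(z)=0$; and $\li(1)=\pi^2/6$ is real. For the converse I would fix $z$ with $\Im z>0$ and show $\Im\li(z)>0$: for $t\in(0,1]$ we have $\Im(1-tz)=-t\Im z<0$, so $1-tz$ lies in the open lower half-plane and $\arg(1-tz)\in(-\pi,0)$, while $\arg(1-tz)/t\to-\Im z<0$ as $t\to0^+$. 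Hence the integrand extends continuously to $[0,1]$ and is strictly negative there, so $\int_0^1\arg(1-tz)\,\frac{dt}{t}<0$ and $\Im\li(z)>0\neq0$. The case $\Im z<0$ is the same with signs reversed (or follows from $\overline{\li(z)}=\li(\overline z)$), and for $x\in(1,\infty)$, where $\li$ takes the boundary value from below, \eqref{wind2} together with $\overline{\li(x+iy)}=\li(x-iy)$ gives $\Im\li(x)=-\pi\log x\neq0$. Thus $\Im\li(z)\neq0$ for every $z\notin(-\infty,1]$.

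I do not expect a genuine obstacle; once the integral representation is in place the rest is sign-chasing. The step needing real care is precisely that validity check --- that the straight segment from $0$ to $z$ misses $[1,\infty)$ when $z\notin[1,\infty)$ --- since this is what makes the principal logarithm the correct branch in the integral and forces $\arg(1-tz)$ to start at $0$ and stay in a half-plane. As an alternative one could reduce $|z|>1$ to $|z|<1$ via \eqref{dilog1} and then analyse the series $\Im\li(z)=\sum_{n\gqs1}r^n\sin(n\theta)/n^2$ for $z=re^{i\theta}$, but controlling the sign of that series for $\theta\in(0,\pi)$ is less transparent than the integral argument, so I would not take that route.
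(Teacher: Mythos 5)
Your proof is correct and is essentially the same as the paper's: the integral $\Im\li(z)=-\int_0^1\arg(1-tz)\,dt/t$ that you obtain from the substitution $u=tz$ is exactly what the paper gets by computing $\frac{d}{dr}\Im\li(re^{i\theta})=-\frac1r\arg(1-re^{i\theta})$, integrating from $0$ to $r$, and substituting $r'=tr$. Both arguments reduce to observing that $\arg(1-w)$ keeps a fixed sign as $w$ runs along the segment from $0$ to $z$ (and both dispatch $x\in(1,\infty)$ via the functional equation or the boundary-value formula \eqref{wind2}).
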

\begin{proof}
We first note that $\overline{\li(\overline{z})} = \li(z)$
is true for $|z|\lqs 1$ by \eqref{def0}, so it must be true for all $z$ in $\C-[1,\infty)$ by analytic continuation. It follows that
\begin{equation*}
    2i \Im \li(z) = \li(z) - \overline{\li(z)} = \li(z) - \li(\overline{z}) \qquad (z \in \C-[1,\infty))
\end{equation*}
and hence
\begin{equation} \label{r1}
    \Im \li(z)=0 \quad \text{ for } z \in (-\infty, 1].
\end{equation}
With \eqref{r1} and \eqref{dilog1} we can see that
\begin{equation} \label{r2}
    \Im \li(z)=-\pi  \log z \neq 0 \quad \text{ for } z \in (1,\infty).
\end{equation}

Write $z \in \C$ as $z=r e^{i \theta}$ for $r\gqs 0$ and $-\pi<\theta\lqs \pi$.
From \eqref{li_def} we have that
\begin{equation*}
    \frac d{dr} \li( r e^{i \theta}) = - \frac 1r \log(1-r e^{i \theta})
\end{equation*}
and so
\begin{equation} \label{plrd}
    \frac d{dr} \Im \li( r e^{i \theta}) = - \Im\left( \frac 1r \log(1-r e^{i \theta})\right) = - \frac 1r \arg(1-r e^{i \theta}).
\end{equation}
 Hence $\Im \li( r e^{i \theta})$ is a  strictly increasing function of $r$ for $0<\theta<\pi$ and  strictly decreasing for $-\pi<\theta<0$. Since $\Im \li( 0) =0$ it follows that $\Im \li(z) \neq 0$ for $z\in \C-\R$. Combining this with \eqref{r1}, \eqref{r2} completes the proof.
\end{proof}

\begin{prop}
Let $A\in \Z$. Then $\phi_{A,0}(z)$ has one zero
 if  $A \gqs 0$ and no zeros otherwise.
\end{prop}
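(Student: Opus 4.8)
The plan is to use Lemma \ref{im0} to confine any zero of $\phi_{A,0}$ to the real ray $(-\infty,1]$, and then to analyze $\li$ restricted to that ray, where it becomes a real-valued, strictly monotone function whose range we can pin down exactly.

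First I would note that $\phi_{A,0}(z)=0$ forces $\li(z)=-4\pi^2 A\in\R$, so $\Im\li(z)=0$, and then Lemma \ref{im0} gives $z\in(-\infty,1]$. Thus it suffices to understand $x\mapsto\li(x)$ on $(-\infty,1]$. From the derivative formula (e.g.\ \eqref{dphi} with $B=0$, or differentiating \eqref{li_def}) we have $\li'(x)=-\log(1-x)/x$ for $x\in(-\infty,1)\setminus\{0\}$, together with $\li'(0)=1$. Since $1-x>0$ throughout this range, $-\log(1-x)/x>0$ both for $x\in(0,1)$ and for $x<0$; hence $\li$ is continuous on $(-\infty,1]$ and strictly increasing there. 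In particular $\phi_{A,0}$ has at most one zero in $\C$.

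Next I would determine the image of $(-\infty,1]$ under $\li$. The right endpoint gives $\li(1)=\pi^2/6$. For the behavior at the left end, apply the inversion formula \eqref{dilog1} with $z=x\to-\infty$: then $1/x\to0^-$, so $\li(1/x)\to0$, and $\li(x)=-\pi^2/6-\tfrac12\log^2(-x)-\li(1/x)\to-\infty$ (alternatively, $\int_{-\infty}^{0}-\log(1-x)/x\,dx$ diverges). Being continuous and strictly increasing with these two boundary behaviors, $\li$ maps $(-\infty,1]$ bijectively onto $(-\infty,\pi^2/6]$.

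Finally, combining these facts, $\phi_{A,0}$ has a zero (necessarily unique and lying in $(-\infty,1]$) if and only if $-4\pi^2 A\in(-\infty,\pi^2/6]$, i.e.\ $-4\pi^2 A\lqs\pi^2/6$, i.e.\ $A\gqs-1/24$, which for integral $A$ is exactly $A\gqs0$; when $A=0$ the zero is at $x=0$. The only genuine computations are the sign of $\li'$ on the negative axis and the limit at $-\infty$, both of which are short, so I do not expect a real obstacle here — the substantive input is Lemma \ref{im0}.
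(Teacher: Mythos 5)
Your argument is correct and follows the same route as the paper: confine zeros to $(-\infty,1]$ via Lemma \ref{im0}, then use the sign of $\li'$ to get strict monotonicity and determine the range $(-\infty,\pi^2/6]$, yielding exactly one zero for $A\gqs 0$ and none otherwise. The paper works out the left-endpoint behavior with the more explicit estimate $\li(-x)=-\pi^2/6-\tfrac12\log^2(x)+\varepsilon$, $|\varepsilon|\lqs\pi^2/(6x)$ (needed anyway for Proposition \ref{rhoa}), while you only extract the qualitative limit $\li(x)\to-\infty$, which is all this proposition requires.
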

\begin{proof}
With Lemma \ref{im0}, we see the only possible solutions to $\phi_{A,0}(z)=0$  have $z=x \in (-\infty,1]$.
We know $\li(x)$ is continuous on $(-\infty,1]$ and it is real-valued for these $x$ by \eqref{r1}. We have
\begin{equation} \label{ddx}
    \frac{d}{dx} \li(x)=-\frac 1x \log(1-x) >0 \quad \text{ for } \quad x \in (-\infty,1)
\end{equation}
so that $\li(x)$ is  strictly increasing. If $\phi_{A,0}(x)=0$  then
\begin{equation*}
       \pi^2/6 = \li(1) \gqs  \li(x) = -4\pi^2  A
\end{equation*}
implying that $\phi_{A,0}$ has no zeros for $A<0$. For $A=0$ there is the necessarily unique zero at $x=0$.

Now we assume $A\gqs 1$.
Then
\begin{equation}\label{ebo}
    \phi_{A,0}(-x)=0 \implies -x \in (-\infty,-1) \quad \text{ since } \quad \li(-1)=-\pi^2/12.
\end{equation}
 From the functional equation \eqref{dilog1} we see
\begin{equation}\label{eqq1}
    \li(-x) = -\frac{\pi^2}{6}-\frac 12 \log^2(x) -\li\left( -\frac 1x\right) \qquad (x>0).
\end{equation}
Note that
\begin{equation}\label{eqq2}
    \left|\li(z)\right| \lqs \sum_{n=1}^\infty \frac{|z|^n}{n^2} \lqs |z|\sum_{n=1}^\infty \frac{1}{n^2} = |z|\frac{\pi^2}6 \qquad (|z|\lqs 1)
\end{equation}
and so \eqref{eqq1} and \eqref{eqq2} imply that, for $x \gqs 1$,
\begin{equation}\label{eqq3}
    \li(-x) = -\frac{\pi^2}{6}-\frac 12 \log^2(x) + \varepsilon \qquad
\end{equation}
with $\varepsilon \in \R$ satisfying $|\varepsilon| \lqs \pi^2/(6x)$. Therefore $\li(-x) \to -\infty$ as $-x \to -\infty$ and $\li(-x) = -4\pi^2  A$ has a single solution, as required.
\end{proof}

\begin{prop} \label{rhoa}
For $A\in \Z_{\gqs 1}$, if $\phi_{A,0}(\rho)=0$ then
\begin{equation*}
    \left|\rho +\exp\left(\pi \sqrt{8A-1/3} \right)\right| < 1/\sqrt{A}.
\end{equation*}
\end{prop}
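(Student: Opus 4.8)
The plan is to push the estimate \eqref{eqq3} one notch further. By the preceding proposition together with \eqref{ebo}, the unique zero of $\phi_{A,0}$ is $\rho=-x$ for a real number $x>1$ satisfying $\li(-x)=-4\pi^2A$. Substituting \eqref{eqq3} into this equation and solving for $\log x$, which is positive since $x>1$, gives
\[
\log x=\sqrt{L^2+2\varepsilon},\qquad L:=\pi\sqrt{8A-1/3},\qquad |\varepsilon|\lqs\frac{\pi^2}{6x},
\]
so that $x=\exp\bigl(\sqrt{L^2+2\varepsilon}\,\bigr)$ and the assertion reduces to a bound on $|x-e^{L}|$.

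First I would show that $\varepsilon$ is in fact negligibly small, via a one-step bootstrap. From $x>1$ we get $|\varepsilon|\lqs\pi^2/6$, hence $\log x\gqs\sqrt{L^2-\pi^2/3}=\pi\sqrt{8A-2/3}\gqs\pi\sqrt{22/3}$ because $A\gqs1$, and therefore $x>4000$ uniformly in $A\gqs1$. Feeding this back into $|\varepsilon|\lqs\pi^2/(6x)$ shows that $|\varepsilon|$, and with it the quantity $2|\varepsilon|/L$, stays below $10^{-4}$ for every $A\gqs1$.

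Next I would estimate, writing $\delta:=\sqrt{L^2+2\varepsilon}-L$,
\[
|x-e^{L}|=e^{L}\,\bigl|e^{\delta}-1\bigr|,\qquad |\delta|=\frac{2|\varepsilon|}{\sqrt{L^2+2\varepsilon}+L}\lqs\frac{2|\varepsilon|}{L}\lqs\frac{\pi^2}{3Lx}.
\]
Since $|\delta|\lqs 10^{-4}\le 1$ we have $|e^{\delta}-1|\lqs 2|\delta|$, giving $|x-e^{L}|\lqs\tfrac{2\pi^2}{3L}\cdot\tfrac{e^{L}}{x}$; combined with the bootstrap bound $x\gqs(1-10^{-4})e^{L}$ this yields $|x-e^{L}|\lqs\tfrac{2\pi^2}{3L}(1+2\cdot10^{-4})$. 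It then remains only to check that $\tfrac{2\pi^2}{3L}<1/\sqrt{A}$ for all $A\gqs1$, which after squaring becomes $4\pi^2A<9(8A-1/3)$, i.e. $3<(72-4\pi^2)A$, true with a generous margin; the tightest instance is $A=1$, where $\tfrac{2\pi^2}{3L}\approx0.76<1$.

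The only genuinely delicate point is uniformity all the way down to $A=1$: the plain asymptotic $x\sim e^{L}$ does not by itself close the inequality, and it is the bootstrap that upgrades this to $x=e^{L}(1+O(10^{-4}))$ valid for every $A\gqs1$, leaving a safe margin in the final elementary inequality. Everything else is routine manipulation of \eqref{eqq3} and the mean value bound for the exponential.
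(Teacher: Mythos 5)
Your argument is correct and follows essentially the same route as the paper's proof: start from the estimate \eqref{eqq3}, solve for $\log x$, use the crude bound $x>1$ together with \eqref{ebo} to bootstrap $x$ up to an exponentially large value (and hence $\varepsilon$ down to something tiny), and then propagate that smallness through the algebra to get the final numerical bound. The paper packages the same bootstrap slightly differently — it keeps the bound $|\varepsilon'|\lqs\exp(-\pi\sqrt{8A-2/3})/3$ and ends up with the factor $\exp\bigl(\pi\sqrt{8A-1/3}-\pi\sqrt{8A-2/3}\bigr)\approx1.21$ to absorb, whereas your explicit $x>4000$ gives the sharper factor $1+2\cdot10^{-4}$ — but this is a cosmetic difference; both leave comfortable room in the final elementary inequality $\tfrac{2\pi}{3}\sqrt{A/(8A-1/3)}<1$.
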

\begin{proof}
It follows from \eqref{eqq3} that
\begin{equation}\label{eqq4}
    \rho = -\exp\left( \pi \sqrt{8A-1/3 + \varepsilon'}\right) \quad \text{ for } \quad |\varepsilon'|=\frac{2|\varepsilon|}{\pi^2} \lqs \frac{1}{3|\rho|}.
\end{equation}
We have $-1/3 \lqs \varepsilon' \lqs 1/3$ by \eqref{ebo} and therefore
\begin{equation*}
    \rho \lqs -\exp\left( \pi \sqrt{8A-2/3}\right)
\end{equation*}
which in turn implies
\begin{equation*}
    |\varepsilon'| \lqs \exp\left( -\pi \sqrt{8A-2/3}\right)/3.
\end{equation*}
Next write
\begin{equation*}
     \exp\left( \pi \sqrt{8A-1/3 + \varepsilon'}\right) =  \exp\left( \pi \sqrt{8A-1/3 }\left(1+u \right)^{1/2}\right)
\end{equation*}
with $u=\varepsilon'/(8A-1/3)$.  If $u \in \C$ satisfies $|u|\lqs 1/2$, say, we have the simple bounds
\begin{align}\label{cb1}
    (1+u)^{1/2}& =1+w \quad \implies \quad |w| \lqs |u|, \\
    \exp(u) & =1+w \quad \implies \quad |w| \lqs 2|u| \label{cb2}.
\end{align}
 Hence
\begin{align*}
    \exp\left( \pi \sqrt{8A-1/3 }\left(1+u \right)^{1/2}\right) & = \exp\left( \pi \sqrt{8A-1/3 }\left(1+w \right)\right)\\
    & = \exp\left( \pi \sqrt{8A-1/3 }\right)\exp\left( w  \cdot \pi \sqrt{8A-1/3 }\right)\\
     & = \exp\left( \pi \sqrt{8A-1/3 }\right)\left( 1+w' \right)
\end{align*}
for $|w'| \lqs 2|w  \cdot \pi \sqrt{8A-1/3 }| \lqs 2|u \cdot \pi \sqrt{8A-1/3 }|$ using \eqref{cb1} and \eqref{cb2}. Then the error has the bounds
\begin{align*}
    \left|w'\exp\left( \pi \sqrt{8A-1/3 }\right)\right| & \lqs \frac{2\pi}{3\sqrt{8A-1/3}}\exp\left( \pi \sqrt{8A-1/3 }- \pi \sqrt{8A-2/3}\right)\\
    & \lqs \frac{2\pi}{3\sqrt{8A-1/3}}\exp\left( \pi \sqrt{8-1/3 }- \pi \sqrt{8-2/3}\right) < \frac{1}{\sqrt{A}}
\end{align*}
as required, where we used that $\sqrt{x+c}-\sqrt{x}$ is a decreasing function of $x$ for $x+c,x\gqs 0$.
\end{proof}

\section{Newton's method for $\phi_{A,0}$} \label{44}

\begin{prop} \label{d0}
Let $z$ and $c$ be two points in $\C$ with $\Re(z)$, $\Re(c) \lqs 1-s$ for $s>0$.
Then
\begin{equation*}
    \left| \li (z) - \li (c) - (z-c) \lip (c)\right| \lqs |z-c|^2 \frac{1}{2s}.
\end{equation*}
\end{prop}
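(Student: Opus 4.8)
The plan is to use the integral form of the Taylor remainder. Writing $f(z) = \li(z)$, we have $f'(z) = \lip(z) = -\log(1-z)/z$, and the quantity to bound is the second-order Taylor remainder
\begin{equation*}
    R := \li(z) - \li(c) - (z-c)\lip(c) = \int_0^1 (1-t)\,(z-c)^2\, \lip{}'\bigl(c + t(z-c)\bigr)\,dt,
\end{equation*}
where I parametrize the straight segment from $c$ to $z$. The segment stays in the half-plane $\Re(u) \lqs 1-s$ since that half-plane is convex and contains both endpoints, so in particular it avoids the branch cut $[1,\infty)$ and the formula for $\lip$ is valid along it. Thus it suffices to show $|\lip{}'(u)| \lqs 1/s$ whenever $\Re(u) \lqs 1-s$; then
\begin{equation*}
    |R| \lqs |z-c|^2 \int_0^1 (1-t)\,\frac{1}{s}\,dt = |z-c|^2 \frac{1}{2s},
\end{equation*}
which is exactly the claimed bound.

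The main work is therefore the pointwise estimate on the second derivative. Differentiating $z\phi'_{A,0}(z) = -\log(1-z)$ (the $B=0$ case of \eqref{dphi}, which the paper allows on $\C - [1,\infty)$), or more directly differentiating $\lip(z) = -\log(1-z)/z$, gives
\begin{equation*}
    \lip{}'(z) = \frac{1}{z(1-z)} + \frac{\log(1-z)}{z^2}.
\end{equation*}
This expression is slightly awkward because of the apparent singularity at $z=0$ and the $\log$ term, so rather than bounding it term-by-term I would instead use the integral representation
\begin{equation*}
    \lip{}'(z) = \frac{d}{dz}\left(-\frac{\log(1-z)}{z}\right) = -\frac{d}{dz}\int_0^1 \frac{dt}{1-tz} = -\int_0^1 \frac{t\,dt}{(1-tz)^2},
\end{equation*}
valid for $z \in \C - [1,\infty)$. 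When $\Re(z) \lqs 1-s$ and $t \in [0,1]$ we have $\Re(1-tz) = 1 - t\,\Re(z) \gqs 1 - t(1-s) = (1-t) + ts \gqs ts$, and also $|1-tz| \gqs \Re(1-tz) \gqs ts$. Hence
\begin{equation*}
    |\lip{}'(z)| \lqs \int_0^1 \frac{t\,dt}{|1-tz|^2} \lqs \int_0^1 \frac{t\,dt}{(ts)^2} = \frac{1}{s^2}\int_0^1 \frac{dt}{t},
\end{equation*}
which diverges — so this crude bound is not quite enough and must be refined.

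The fix is to keep one more unit from the real part: use $|1-tz|^2 \gqs \Re(1-tz)^2 = (1-t\Re(z))^2$ together with the better lower bound $1 - t\Re(z) \gqs 1 - t(1-s)$, and split the estimate according to whether $t$ is small or close to $1$; or, more cleanly, bound $|1-tz| \gqs \max(ts,\; 1-t)^{1/2}\cdot(\cdots)$ — the honest route is to note $\Re(1-tz) \gqs ts$ \emph{and} $\Re(1-tz) \gqs (1-t)$, hence $|1-tz|^2 \gqs ts(1-t) + \min$, giving $\int_0^1 t\,dt/|1-tz|^2 \lqs \int_0^1 t\,dt/(t s \cdot \max(ts,1-t))$, which is $O(1/s)$ after splitting at $t = 1/(1+s)$. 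I expect this case-split — squeezing the constant down to exactly $1/(2s)$ rather than $O(1/s)$ — to be the only real obstacle; everything else (validity of the Taylor formula on the segment, convexity of the half-plane, the integral representation) is routine. An alternative that sidesteps the delicate constant is to apply the Taylor remainder one level up: write $\li(z)-\li(c)-(z-c)\lip(c)$ as a \emph{first}-order remainder for $\lip$, namely $\int_c^z (\lip(u) - \lip(c))\,du$, and bound $|\lip(u)-\lip(c)|$ by $|u-c|\sup|\lip{}'|$ along the segment — but that faces the same singularity, so I would commit to the integral-representation route with the $t=1/(1+s)$ split as the main computation.
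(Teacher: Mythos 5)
Your approach is sound and genuinely different from the paper's. The paper writes the second-order Taylor remainder as a Cauchy-type contour integral and collapses the contour onto the branch cut $[1,\infty)$; the jump $2\pi i\log w$ across the cut reduces it to $\int_0^\infty \log(t+1)\,(t+1-c)^{-2}(t+1-z)^{-1}\,dt$, and the half-plane hypothesis gives $|t+1-c|,\ |t+1-z|\gqs t+s$, so the whole thing is $\lqs\int_0^\infty t\,(t+s)^{-3}\,dt=1/(2s)$. That contour template is then reused in Propositions \ref{d1} and \ref{d2}. You instead use the classical integral form of the Taylor remainder along the segment $[c,z]$ together with the elementary representation $\lip(u)=\int_0^1(1-tu)^{-1}\,dt$; this avoids the branch-cut bookkeeping and is self-contained, though more tailored to this single proposition.

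The ``only real obstacle'' you flag closes cleanly with no case split, so you should not settle for $O(1/s)$. Keeping the full real-part lower bound $\Re(1-tu)=1-t\Re(u)\gqs 1-t(1-s)$ (rather than discarding the $1-t$, which is what made your first attempt diverge), for $\Re(u)\lqs 1-s$ you get
\begin{equation*}
\bigl|\li''(u)\bigr|\lqs\int_0^1\frac{t\,dt}{\bigl(1-t(1-s)\bigr)^2}
=\frac{1}{(1-s)^2}\left(\frac{1-s}{s}+\log s\right)\qquad(s\neq 1),
\end{equation*}
and the right-hand side is $\lqs 1/s$ because the difference equals $\bigl((1-s)+\log s\bigr)(1-s)^{-2}\lqs 0$ by $\log s\lqs s-1$; the case $s=1$ gives $\int_0^1 t\,dt=1/2\lqs 1$ directly. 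Combined with $\int_0^1(1-t)\,dt=1/2$, this is exactly $|z-c|^2/(2s)$. Two small fixes: the identity should read $-\log(1-u)/u=+\int_0^1(1-tu)^{-1}\,dt$, so the intermediate minus sign in your chain is spurious (harmless, as you only take absolute values); and the line ``$|1-tz|^2\gqs ts(1-t)+\min$'' is garbled and, given the estimate above, unnecessary.
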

\begin{proof}
We need to bound the remainder term in the following Taylor expansion of $\li(z)$ at $c$,
\begin{equation*}
    \li (z) = \li (c) + (z-c) \lip (c) + \frac{(z-c)^2}{2\pi i}\int_C \frac{\li(w)}{(w-c)^2(w-z)} \, dw,
\end{equation*}
where  $C$ is the circular path of radius $T$ centered at $1$ that avoids the branch cut $[1,\infty)$ as shown in Figure \ref{cir}.
\SpecialCoor
\psset{griddots=5,subgriddiv=0,gridlabels=0pt}
\psset{xunit=0.7cm, yunit=0.7cm, runit=0.7cm}
\psset{linewidth=1pt}
\psset{dotsize=4pt 0,dotstyle=*}
\begin{figure}[h]
\begin{center}
\begin{pspicture}(-2,-2.5)(4,3) 

\psline[linecolor=gray]{}(-3,0)(1,0)
\psline[linecolor=gray]{->}(0,-2.5)(0,3)
\multirput(-2,-0.08)(1,0){2}{\psline[linecolor=gray](0,0)(0,0.16)}
\multirput(-0.1,-2)(0,1){5}{\psline[linecolor=gray](0,0)(0.2,0)}

\multirput(1,-0.08)(1,0){4}{\psline[linecolor=red](0,0)(0,0.16)}
\psline[linecolor=red]{->}(1,0)(5,0)

\newrgbcolor{darkbrn}{0.4 0.2 0}

\psset{arrowscale=2,arrowinset=0.5}
\psarc[linecolor=darkbrn](1,0){0.5}{23.58}{336.42}
\psarc[linecolor=darkbrn](1,0){2.5}{4.59}{355.41}
\psline[linecolor=darkbrn](1.44,0.2)(3.51,0.2)
\psline[linecolor=darkbrn]{->}(2.9,0.2)(3,0.2)
\psline[linecolor=darkbrn](1.44,-0.2)(3.51,-0.2)
\psline[linecolor=darkbrn]{->}(1.9,-0.2)(1.8,-0.2)
\psarc[linecolor=darkbrn]{->}(1,0){2.5}{60}{62}

\psdots(-0.3,0.6)
\psdots(-0.5,-1.4)
\rput(-0.5,0.9){$z$}
\rput(-0.6,-1.0){$c$}
\rput(3.3,2){$C$}

\end{pspicture}
\caption{The path of integration $C$}\label{cir}
\end{center}
\end{figure}
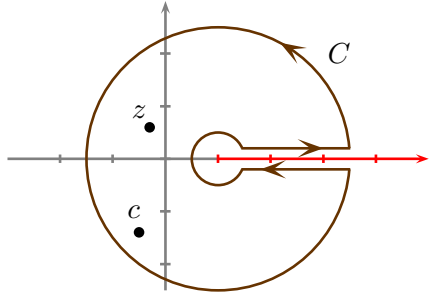
The path $C$ makes a small circle of radius $\varepsilon$ about $1$ that is connected to a path  from $1+\varepsilon$ to $1+T$ just above the cut and a path  from $1+T$  to $1+\varepsilon$ just below the cut. Letting  the horizontal paths meet the branch cut, we find the values of $\li(w)$ with \eqref{wind2}. Use \eqref{dilog1} and \eqref{dilog2} to see that the growth of $\li(z)$ is logarithmic on the circles of radius $T$ and $\varepsilon$ so that
as $T \to \infty$ and $\varepsilon \to 0$  their contributions to $\int_C$ go to zero.  Therefore
\begin{equation*}
    \frac{1}{2\pi i}\int_C \frac{\li(w)}{(w-c)^2(w-z)} \, dw = \int_{0}^\infty \frac{\log( t +1)}{( t+1 -c)^2( t+1 -z)} \, dt.
\end{equation*}
Since $| t+1 -c|$, $| t+1 -z|  \gqs t+s$ and $|\log(t+1)| \lqs  t$, the proposition follows from
\begin{equation*}
    \left|\int_{0}^\infty \frac{ \log( t +1)}{(t+1 -c)^2(t+1 -z)} \, dt \right| \lqs \int_{0}^\infty \frac{t}{(t+s)^3} \, dt = \frac{1}{2s}. \qedhere
\end{equation*}
\end{proof}

Write $D_A:=\exp \left(\pi\sqrt{8A-1/3}\right)$ and $c^*:=c-\phi_{A,0}(c)/\phi'_{A,0}(c)$.

\begin{lemma} \label{nwt0}
Let $A \in \Z_{\gqs 1}$. Suppose $\rho$ and $c$ are in the interval $\Bigl[-D_A-1/\sqrt{A}, -D_A+1/\sqrt{A}\Bigr]$  and $\phi_{A,0}(\rho)=0$. Then
\begin{equation} \label{mon0}
    \left| \rho - c^*\right| < \frac{|\rho-c|^2}{2\pi \sqrt{A}}.
\end{equation}
\end{lemma}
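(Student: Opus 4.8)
The plan is to recognize that the Newton iteration error $\rho - c^*$ is precisely the quantity controlled by the quadratic Taylor remainder of $\phi_{A,0}$ at $c$, and that Proposition \ref{d0} provides exactly this remainder bound since $\phi_{A,0}(z)=\li(z)+4\pi^2A$ differs from $\li(z)$ only by a constant. First I would write out the Newton step: since $\phi_{A,0}(\rho)=0$,
\begin{equation*}
    \rho - c^* = \rho - c + \frac{\phi_{A,0}(c)}{\phi'_{A,0}(c)} = \frac{\phi_{A,0}(c) - \phi_{A,0}(\rho) - (c-\rho)\phi'_{A,0}(c)}{\phi'_{A,0}(c)}.
\end{equation*}
The numerator is exactly $-\bigl(\li(\rho) - \li(c) - (\rho-c)\lip(c)\bigr)$ because the additive constant $4\pi^2A$ cancels in the difference and drops out of the derivative. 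So the numerator is bounded by $|\rho-c|^2/(2s)$ via Proposition \ref{d0}, once I check the hypothesis $\Re(\rho),\Re(c)\lqs 1-s$.

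Next I would verify that hypothesis and extract a usable value of $s$. Both $\rho$ and $c$ lie in $\bigl[-D_A-1/\sqrt{A},\,-D_A+1/\sqrt{A}\bigr]$, which is a set of negative reals, so I may take $s$ as large as $1+D_A-1/\sqrt{A}$; in particular $s\gqs D_A$ comfortably (for $A\gqs 1$, $D_A = \exp(\pi\sqrt{8A-1/3})$ is enormous and dwarfs $1/\sqrt{A}$), hence $1/(2s)\lqs 1/(2D_A)$. Then I need a lower bound on $|\phi'_{A,0}(c)|$. From \eqref{dphi}, $c\,\phi'_{A,0}(c) = -\log(1-c)$ (with $B=0$), so $\phi'_{A,0}(c) = -\log(1-c)/c$. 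Since $c$ is a negative real near $-D_A$, we have $|c|\lqs D_A+1/\sqrt{A}$ and $1-c\gqs 1+D_A-1/\sqrt{A}$, giving $|\phi'_{A,0}(c)| \gqs \log(1+D_A-1/\sqrt{A})/(D_A+1/\sqrt{A})$. The logarithm in the numerator is essentially $\pi\sqrt{8A-1/3}$, i.e.\ comparable to $\pi\sqrt{8A}$, so $|\phi'_{A,0}(c)| \gtrsim \pi\sqrt{8A}/D_A$ up to lower-order factors.

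Combining the two estimates,
\begin{equation*}
    |\rho - c^*| \lqs \frac{|\rho-c|^2}{2s}\cdot\frac{1}{|\phi'_{A,0}(c)|} \lqs \frac{|\rho-c|^2}{2D_A}\cdot\frac{D_A+1/\sqrt{A}}{\log(1+D_A-1/\sqrt{A})},
\end{equation*}
and the right-hand side should simplify to something strictly below $|\rho-c|^2/(2\pi\sqrt{A})$. The main obstacle is purely this final numerical bookkeeping: I must confirm that the ratio $(D_A+1/\sqrt{A})/\bigl(D_A\log(1+D_A-1/\sqrt{A})\bigr)$ is bounded by $1/(\pi\sqrt{A})$ for all integers $A\gqs 1$, which amounts to checking $\log(1+D_A-1/\sqrt{A}) \gqs \pi\sqrt{A}\,(1 + 1/(D_A\sqrt{A}))$. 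Since $\log D_A = \pi\sqrt{8A-1/3} > \pi\sqrt{7A} = \pi\sqrt{7}\,\sqrt{A} > 2.6\,\pi\sqrt{A}$, there is a large multiplicative slack (a factor of roughly $2.6$), so the inequality holds with room to spare after absorbing the $\pm 1/\sqrt{A}$ corrections and the $1+\cdots$ inside the logarithm; I would just need to be careful that the smallest case $A=1$ still clears the bound, which it does since $D_1 = \exp(\pi\sqrt{23/3}) \approx \exp(8.69)$ is already several thousand. Verifying the strict inequality in \eqref{mon0} then follows, since all the intermediate bounds are non-strict but $D_A$ exceeds its lower-order corrections strictly.
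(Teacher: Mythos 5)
Your proposal is correct and follows essentially the same path as the paper: rewrite $\rho - c^*$ as the Taylor remainder of $\phi_{A,0}$ at $c$ divided by $\phi'_{A,0}(c)$, bound the remainder via Proposition \ref{d0}, bound $|c|$ above by $D_A+1/\sqrt{A}$ and $|\log(1-c)|$ below (the paper uses the slightly looser but simpler $\pi\sqrt{8A-1/3}=\log D_A$, which suffices since $1-c \gqs 1+D_A-1/\sqrt{A} \gqs D_A$ for $A\gqs 1$), and finish with the numerical inequality. The only deviations are cosmetic choices of $s$ and the log bound, plus a bit more hand-waving in the final bookkeeping, which the paper closes cleanly by reducing to $1<D_A(\sqrt{8A-1/3}-\sqrt{A})$.
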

\begin{proof}
Let $z=\rho$ and $s=D_A$ in Proposition \ref{d0} to get
 \begin{equation} \label{sw7}
    \bigl| \phi_{A,0}(\rho) - \phi_{A,0}(c) - (\rho-c) \phi'_{A,0}(c)\bigr| \lqs |\rho-c|^2 \frac{1}{2D_A}.
\end{equation}
Now $c \cdot \phi'_{A,0}(c) = -\log(1-c)$ as in \eqref{dphi} and dividing \eqref{sw7} by $\phi'_{A,0}(c)$ finds
\begin{equation*}
    \left| \rho - c^*\right| \lqs |\rho-c|^2 \frac{|c|}{2D_A |\log(1-c)|} \lqs |\rho-c|^2 \frac{D_A+1/\sqrt{A}}{2D_A \cdot \pi\sqrt{8A-1/3}} < \frac{|\rho-c|^2}{2\pi \sqrt{A}}.
    \qedhere
\end{equation*}
 \end{proof}

Let $c_0:=-D_A = -\exp(\pi\sqrt{8A-1/3})$ and  $c_{n+1}:=c_n-\phi_{A,0}(c)/\phi'_{A,0}(c)$ for $n\gqs 0$.
\begin{prop} \label{seq}
Suppose $A \in \Z_{\gqs 1}$ and $\rho$ satisfies $\phi_{A,0}(\rho)=0$. Then the sequence $c_0$, $c_1$, $\cdots$ above converges  to $\rho$ with
\begin{equation*}
    \left| \rho - c_n\right| \lqs  2\pi \sqrt{A} \cdot (2\pi A)^{-2^n}.
\end{equation*}
\end{prop}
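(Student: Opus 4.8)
The plan is to combine the quadratic contraction estimate of Lemma~\ref{nwt0} with an induction that simultaneously tracks two facts: that $c_n$ stays inside the interval $\bigl[-D_A-1/\sqrt{A},-D_A+1/\sqrt{A}\bigr]$ and that $|\rho-c_n|$ obeys the stated bound. First I would record the base case $n=0$: by Proposition~\ref{rhoa}, $|\rho-c_0|=|\rho+D_A|<1/\sqrt{A}$, so $c_0$ lies in the interval and $\rho$ does too; moreover $1/\sqrt{A}\lqs 2\pi\sqrt{A}\cdot(2\pi A)^{-1}$ since this says $1\lqs 2\pi\cdot 2\pi =4\pi^2$, which is clear. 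So the claimed inequality holds at $n=0$.

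For the inductive step, suppose $c_n$ lies in the interval and $|\rho-c_n|\lqs 2\pi\sqrt{A}\,(2\pi A)^{-2^n}$. The point $c_{n+1}$ is exactly the Newton iterate $c_n^*$ of Lemma~\ref{nwt0} (applied with $c=c_n$), and since both $\rho$ and $c_n$ lie in the interval the lemma gives
\begin{equation*}
    |\rho-c_{n+1}| < \frac{|\rho-c_n|^2}{2\pi\sqrt{A}} \lqs \frac{1}{2\pi\sqrt{A}}\left(2\pi\sqrt{A}\,(2\pi A)^{-2^n}\right)^2 = 2\pi\sqrt{A}\,(2\pi A)^{-2^{n+1}}.
\end{equation*}
This is the desired bound at step $n+1$. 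It remains to check that $c_{n+1}$ is again in the interval, i.e.\ that $|\rho-c_{n+1}|<1/\sqrt{A}$ (recall $\rho$ itself is in the interval by the base case, so it suffices to control the distance from $\rho$). From the displayed bound, $|\rho-c_{n+1}|\lqs 2\pi\sqrt{A}\,(2\pi A)^{-2^{n+1}}\lqs 2\pi\sqrt{A}\,(2\pi A)^{-2}$ for $n\gqs 0$, and $2\pi\sqrt{A}\,(2\pi A)^{-2}=1/(2\pi A^{3/2})\lqs 1/\sqrt{A}$; this closes the induction.

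Finally I would observe that convergence follows immediately since $2\pi A\gqs 2\pi>1$, so $(2\pi A)^{-2^n}\to 0$, and the rate displayed is the quadratic rate asserted. The only point requiring a little care — and the step I expect to be the mild obstacle — is the bookkeeping that keeps every iterate inside the interval where Lemma~\ref{nwt0} applies; this is why the induction hypothesis must be stated as the conjunction of the error bound and the containment, rather than the error bound alone. Everything else is routine arithmetic with the constants. One should also note the harmless typo that $c_{n+1}$ should read $c_n-\phi_{A,0}(c_n)/\phi'_{A,0}(c_n)$, matching the iteration in Theorem~\ref{rhoa2}.
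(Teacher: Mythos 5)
Your argument is the same as the paper's: start from the localization $|\rho-c_0|<1/\sqrt{A}$ of Proposition~\ref{rhoa}, iterate the quadratic contraction of Lemma~\ref{nwt0}, and telescope. Two small points, though. First, a slip in the base-case arithmetic: $2\pi\sqrt{A}\,(2\pi A)^{-1}$ equals $1/\sqrt{A}$ exactly, so the base inequality is an equality, not a consequence of ``$1\lqs 4\pi^2$''; harmless, but the reduction you wrote down is not correct.

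Second, and more substantively, the containment step is not watertight as stated. You write that to keep $c_{n+1}$ in $\Bigl[-D_A-1/\sqrt{A},\,-D_A+1/\sqrt{A}\Bigr]$ ``it suffices to control the distance from $\rho$,'' but the interval is centered at $-D_A$, not at $\rho$. Knowing $\rho$ is in the interval and $|\rho-c_{n+1}|<1/\sqrt{A}$ only yields, via the triangle inequality, $|c_{n+1}-(-D_A)|<2/\sqrt{A}$ --- twice what membership requires; even your sharper bound $|\rho-c_{n+1}|\lqs 1/(2\pi A^{3/2})$ still only gives $|c_{n+1}-(-D_A)|<1/\sqrt{A}+1/(2\pi A^{3/2})$. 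The published proof is equally terse on this point (``$|\rho-c^*|<|\rho-c|$ so that $c^*$ remains in the interval''), so you are in good company, but the claim does not follow from what is written. The remedy is to observe that the hypotheses feeding Lemma~\ref{nwt0} are far from tight --- Proposition~\ref{d0} only needs $\Re(c)\lqs 1-s$ for some $s$ near $D_A$, and $D_A=\exp(\pi\sqrt{8A-1/3})$ is enormous --- so the lemma (with essentially the same constants) holds on a slightly enlarged interval, e.g.\ $\Bigl[-D_A-2/\sqrt{A},\,-D_A+2/\sqrt{A}\Bigr]$ or a ball of fixed small radius about $\rho$, which does trap all the iterates. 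If you restate the induction hypothesis as membership in such an enlarged set, the argument closes cleanly.
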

\begin{proof}
With Proposition \ref{rhoa} we know that $\rho$ is in the interval $[-D_A-1/\sqrt{A}, -D_A+1/\sqrt{A}]$.
Lemma \ref{nwt0} implies that for any $c \in [-D_A-1/\sqrt{A}, -D_A+1/\sqrt{A}]$ we have $|\rho-c^*|<|\rho-c|$ so that $c^*$ remains in the interval. In this way, starting with $c_0$, we may keep applying Lemma \ref{nwt0} to each term in the sequence.
We find
\begin{equation*}
    \frac{|\rho-c_n|}{2\pi \sqrt{A}} < \left(\frac{|\rho-c_{n-1}|}{2\pi \sqrt{A}}\right)^2 < \left(\frac{|\rho-c_{n-2}|}{2\pi \sqrt{A}}\right)^4 <
    \cdots < \left(\frac{|\rho-c_{0}|}{2\pi \sqrt{A}}\right)^{2^n}
\end{equation*}
and $|\rho-c_0| \lqs 1/\sqrt{A}$ completes the proof.
\end{proof}

For example, with $A=1$ the sequence produced by Proposition \ref{seq} has initial value $c_0 \approx -5994.97063$ and next term  $c_1 \approx -5995.08558$. Taking more terms, we find $\phi_{1,0}(\rho)=0$ for $\rho \approx -5995.08558$  which is correct to the precision shown and already given by $c_1$.

With the results in Sections \ref{33} and \ref{44} we have proved Theorem \ref{rhoa2} and the $B=0$ case of Theorem \ref{ebt}. We prove the simplicity of all zeros in Proposition \ref{simple}.

\section{Zeros of $\phi_{A,B}$} \label{55}

\begin{lemma}
For $A\in \Z$, $B \in \Z_{\gqs 1}$ and $x\in \R$ we have $\phi_{A,B}(x)\neq 0$.
\end{lemma}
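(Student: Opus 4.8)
The plan is to work separately on the pieces $(1,\infty)$, $(0,1)$, $(-\infty,0)$ of $\R$ together with the three exceptional points $x=1$, $x=0$, $x=-1$, showing that on the open intervals the imaginary part of $\phi_{A,B}(x)$ is nonzero, while at the exceptional points a one-line evaluation suffices. Throughout recall that $\phi_{A,B}(x)=\li(x)+4\pi^2A+2\pi iB\log(x)$; that $\log$ is real-valued on $(0,\infty)$ whereas the principal branch gives $\log(x)=\log(-x)+i\pi$ for $x<0$; that $\li(x)\in\R$ for $x\in(-\infty,1]$ by \eqref{r1}; and that $\Im\li(x)=-\pi\log(x)$ for $x\in(1,\infty)$ by \eqref{r2}.

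On $(1,\infty)$, taking imaginary parts and using \eqref{r2} gives $\Im\phi_{A,B}(x)=(2B-1)\pi\log(x)$, which is strictly positive since $B\gqs1$ and $\log(x)>0$. On $(0,1)$ the same computation with $\Im\li(x)=0$ gives $\Im\phi_{A,B}(x)=2\pi B\log(x)<0$. On $(-\infty,0)$, substituting $\log(x)=\log(-x)+i\pi$ and using that $\li(x)$ is real, one finds that the imaginary part of $\phi_{A,B}(x)$ equals $2\pi B\log(-x)$, which vanishes only at $x=-1$. And as $z\to0$ the term $2\pi iB\log(z)$ forces $|\phi_{A,B}(z)|\to\infty$ (indeed $0$ is a branch point of $\phi_{A,B}$ when $B\neq0$), so $x=0$ is not a zero. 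Hence $\phi_{A,B}(x)\neq0$ on $\R\setminus\{1,-1\}$.

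It remains to dispose of $x=1$ and $x=-1$, which is the only place an actual computation enters. At $x=1$ we have $\log(1)=0$ and $\li(1)=\pi^2/6$, so $\phi_{A,B}(1)=\pi^2(1+24A)/6$, which is nonzero because $1+24A$ is never $0$ for $A\in\Z$. At $x=-1$ we have $\log(-1)=i\pi$ and $\li(-1)=-\pi^2/12$, so $\phi_{A,B}(-1)=\pi^2(48A-24B-1)/12$, which is nonzero because $48A-24B-1$ is never $0$ for $A,B\in\Z$. Combining all cases proves the lemma.

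I do not expect any real obstacle; the argument is pure bookkeeping of real and imaginary parts. The only points needing a little care are the consistent choice of the principal value of $\log$ on the negative axis and handling the branch points $x\in\{-1,0,1\}$ by direct evaluation rather than by the imaginary-part argument.
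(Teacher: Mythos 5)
Your proof is correct and follows essentially the same route as the paper: split the real line into pieces, show the imaginary part $(2B-1)\pi\log x$, $2\pi B\log x$, or $2\pi B\log|x|$ is nonzero on the open intervals, and then finish $x=\pm 1$ by direct evaluation. The only (minor, and welcome) difference is that you explicitly note $x=0$ is a branch point rather than folding it into the $[0,1]$ case as the paper does.
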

\begin{proof}
For $x > 1$, the imaginary part of $\phi_{A,B}(x)$ equals
$\pi (-1+2B)\log x \neq 0$ by \eqref{r2}.
For $0 \lqs x \lqs 1$ the imaginary part of $\phi_{A,B}(x)$ equals $2\pi B \log x$ by \eqref{r1}.
This is zero only if $x=1$, but then
\begin{equation*}
    \phi_{A,B}(1) = \li(1) + 4\pi^2  A +   2\pi i  B  \log \left(1\right) = \pi^2(1/6 +4A) \neq 0.
\end{equation*}
 For the remaining case of $x<0$ we have $\log x=\log |x| +\pi i$ so that the imaginary part of $\phi_{A,B}(x)$ equals $2\pi B \log |x|$. This is zero only if $x=-1$, but then
\begin{equation*}
    \phi_{A,B}(-1) = \li(-1) + 4\pi^2  A - 2\pi^2  B   = \pi^2(-1/12 + 4A-2B ) \neq 0. \qedhere
\end{equation*}
\end{proof}

So we look for solutions to $\phi_{A,B}(z)=0$ with $z \in \C -\R$. The main idea to locate these zeros is to consider the vanishing of the real and imaginary parts of $\phi_{A,B}(z)$ separately. We will see that $\Im \, \phi_{A,B}(z)=0$ makes a curve near the unit circle and $\Re \, \phi_{A,B}(z)=0$
makes a curve close to the ray from the origin through $e^{2\pi i A/B}$ when $A$ is small enough.

For $B \in \Z_{\gqs 1}$  fixed, write the imaginary part of
\begin{equation} \label{mainp}
   \phi_{A,B}(r e^{i \theta}) =  \li(r e^{i \theta}) + 4\pi^2  A +   2\pi i  B  \log \left(r e^{i \theta}\right).
\end{equation}
 as the function
\begin{equation} \label{plr}
    I_\theta(r):=\Im \li(r e^{i \theta}) +   2\pi   B  \log r  .
\end{equation}
As in \eqref{plrd},
\begin{align} \label{plrd2}
 \frac{\partial I}{\partial r} & = \frac 1r \left(2\pi B -  \arg(1-r e^{i \theta})\right),\\
 \frac{\partial  I}{\partial \theta} & =  -  \log |1-r e^{i \theta}|. \label{plrd3}
\end{align}

\begin{lemma}
Fix $B \in \Z_{\gqs 1}$. For each $\theta$ with $0<|\theta|<\pi$ there exists a unique $r>0$ so that $I_\theta(r)=0$.
\end{lemma}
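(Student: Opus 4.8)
The statement asks for existence and uniqueness of a zero of $I_\theta$ on the ray of angle $\theta$. The plan is to show that, for fixed $B\in\Z_{\gqs 1}$ and fixed $\theta$ with $0<|\theta|<\pi$, the function $r\mapsto I_\theta(r)$ is continuous and \emph{strictly increasing} on $(0,\infty)$, with $I_\theta(r)\to-\infty$ as $r\to 0^+$ and $I_\theta(r)\to+\infty$ as $r\to\infty$; the intermediate value theorem then yields exactly one $r>0$ with $I_\theta(r)=0$. First I would note that $I_\theta$ is well defined and real-analytic on $(0,\infty)$: since $0<|\theta|<\pi$, the ray $r\mapsto re^{i\theta}$ never meets the branch cut $[1,\infty)$ (nor the point $0$), so the expression \eqref{plr} and the derivative formula \eqref{plrd2} are valid for all $r>0$.

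For the monotonicity, the key observation is that for $0<|\theta|<\pi$ the point $1-re^{i\theta}$ has imaginary part $-r\sin\theta\neq 0$ for every $r>0$, hence it never lies on $(-\infty,0]$ and its principal argument satisfies $|\arg(1-re^{i\theta})|<\pi$. Combining this with \eqref{plrd2} and $B\gqs 1$ gives
\begin{equation*}
\frac{\partial I}{\partial r}=\frac 1r\bigl(2\pi B-\arg(1-re^{i\theta})\bigr)>\frac{(2B-1)\pi}{r}\gqs\frac{\pi}{r}>0 .
\end{equation*}
Thus $I_\theta$ is strictly increasing on $(0,\infty)$, which already settles uniqueness.

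For existence I would integrate this lower bound on the derivative: for any $0<a<b$,
\begin{equation*}
I_\theta(b)-I_\theta(a)=\int_a^b\frac{\partial I}{\partial r}\,dr>\pi(\log b-\log a).
\end{equation*}
Fixing $a=1$ and letting $b\to\infty$ forces $I_\theta(r)\to+\infty$; fixing $b=1$ and letting $a\to 0^+$ forces $I_\theta(r)\to-\infty$ (alternatively, as $r\to0^+$ one has $\Im\li(re^{i\theta})\to0$ by \eqref{eqq2} while $2\pi B\log r\to-\infty$, confirming the limit). Since $I_\theta$ is continuous on $(0,\infty)$, the intermediate value theorem produces a point where $I_\theta=0$, and strict monotonicity makes it unique. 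I do not anticipate a real obstacle here; the one point needing care is the claim $|\arg(1-re^{i\theta})|<\pi$, i.e.\ verifying that the curve $r\mapsto 1-re^{i\theta}$ stays off the branch cut of the principal argument, so that the bound on $\partial I/\partial r$ holds uniformly in $r$.
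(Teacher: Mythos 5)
Your proof is correct, and it diverges from the paper's in one meaningful spot: the limit $I_\theta(r)\to+\infty$ as $r\to\infty$. The paper establishes this by applying the inversion formula \eqref{dilog1} to rewrite $I_\theta(r)$ as in \eqref{op}, $I_\theta(r)=-\Im\li(r^{-1}e^{-i\theta})+(2\pi B+\sgn\theta\cdot\pi-\theta)\log r$, and reading off the growth from the explicit logarithmic term. You instead observe that $1-re^{i\theta}$ has nonzero imaginary part for $0<|\theta|<\pi$, hence $|\arg(1-re^{i\theta})|<\pi$ uniformly, so \eqref{plrd2} gives the uniform lower bound $\partial I/\partial r>(2B-1)\pi/r\gqs \pi/r$; integrating this gives $I_\theta(b)-I_\theta(a)>\pi\log(b/a)$, which forces both limits at once. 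Your route is a bit more self-contained (it bypasses the dilogarithm inversion formula and handles $r\to0^+$ and $r\to\infty$ by the same estimate), while the paper's route via \eqref{op} yields a quantitative expression for $I_\theta(r)$ that is then reused immediately in the proof of Proposition \ref{yu} to get the bounds \eqref{b1} and \eqref{b2}; so the paper's choice is motivated by that subsequent use rather than by necessity here. Both arguments correctly supply strict monotonicity (from $\partial I/\partial r>0$), continuity, and the two infinite limits, so existence and uniqueness follow by the intermediate value theorem in either case.
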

\begin{proof}
With \eqref{plrd2} we have
$ \frac{\partial  I}{\partial r}  >0$
so that $I_\theta$ is a strictly increasing function of $r$.
It is easy to see that $\lim_{r \to 0} I_\theta(r) = -\infty$. With \eqref{dilog1},
\begin{align*}
     \li(r e^{i \theta}) & = - \li \left(\frac 1r e^{-i \theta}\right) - \frac{\pi^2}6 -\frac 12 \log^2\left(\frac 1r e^{i(\pi- \theta)} \right)\\
     & = - \li\left(\frac 1r e^{-i \theta}\right) - \frac{\pi^2}6 -\frac 12 \Bigl(-\log r + i(\sgn \theta \cdot \pi - \theta) \Bigr)^2
\end{align*}
and hence
\begin{equation} \label{op}
    I_\theta(r)=- \Im \li\left(\frac 1r e^{-i \theta}\right) + (2\pi B + \sgn \theta \cdot \pi - \theta) \log r.
\end{equation}
It follows that
 $\lim_{r \to \infty} I_\theta(r) = \infty$ and so   exactly one  $r$ makes $I_\theta(r)=0$.
\end{proof}


\SpecialCoor
\psset{griddots=5,subgriddiv=0,gridlabels=0pt}
\psset{xunit=6cm, yunit=6cm, runit=6cm}
\psset{linewidth=1pt}
\psset{dotsize=3pt 0,dotstyle=*}

\begin{figure}[h]
\begin{center}
\begin{pspicture}(-0.3,-0.2)(1.2,0.6) 

\savedata{\mydata}[
{{1.09412, -0.192923}, {1.08655, -0.158145}, {1.0769, -0.123882},
{1.06419, -0.0901737}, {1.04744, -0.0571767}, {1.02669, -0.0251928},
{0.993985, 0.00543425}, {0.978384, 0.034715}, {0.964928,
  0.0632614}, {0.953638, 0.0913186}, {0.94154, 0.118756}, {0.930639,
  0.145861}, {0.918965, 0.172419}, {0.907518, 0.198624}, {0.896302,
  0.224541}, {0.884352, 0.249965}, {0.871688, 0.274875}, {0.859274,
  0.299581}, {0.846165, 0.323791}, {0.833304, 0.347871}, {0.818851,
  0.371059}, {0.804658, 0.394134}, {0.78982, 0.416693}, {0.775226,
  0.439211}, {0.759138, 0.46071}, {0.743299, 0.482185}, {0.726857,
  0.503125}, {0.709829, 0.523514}, {0.692233, 0.543335}, {0.674088,
  0.562573}}
]

\savedata{\mydatb}[
{{0.0999888, 0.00149994}, {0.129974, 0.00259983}, {0.159946,
  0.00415953}, {0.189909, 0.00588906}, {0.219857,
  0.00791829}, {0.24978, 0.0104969}, {0.279691, 0.0131552}, {0.309565,
   0.0164223}, {0.339428, 0.0197089}, {0.369242, 0.0236638}, {0.39902,
   0.0279771}, {0.428759, 0.0326485}, {0.458416,
  0.0381362}, {0.488061, 0.0435525}, {0.517655, 0.0493257}, {0.547141,
   0.0560028}, {0.576558, 0.0630949}, {0.605901,
  0.0706014}, {0.635165, 0.0785217}, {0.664346, 0.0868549}, {0.693441,
   0.0956003}, {0.722339, 0.105479}, {0.751237, 0.115076}, {0.77991,
  0.125861}, {0.808592, 0.136304}, {0.837018, 0.147992}, {0.865306,
  0.160143}, {0.893624, 0.171862}, {0.921819, 0.183985}, {0.949886,
  0.19651}, {0.978031, 0.20846}, {1.00607, 0.220754}, {1.03422,
  0.232358}, {1.06228, 0.244248}, {1.09051, 0.255335}, {1.11866,
  0.266651}, {1.14674, 0.278196}}
]

\psline[linecolor=gray]{->}(-0.3,0)(1.3,0)
\psline[linecolor=gray]{->}(0,-0.2)(0,0.6)
\multirput(-0.2,-0.01)(0.1,0){15}{\psline[linecolor=gray](0,0)(0,0.02)}
\multirput(-0.01,-0.1)(0,0.1){7}{\psline[linecolor=gray](0,0)(0.02,0)}


\psarc[linecolor=orange,linestyle=dashed,dash=3pt 2pt](0,0){1}{350}{35}
\pscustom[linewidth=1pt,fillstyle=solid,fillcolor=orange]{
\psarc(0,0){0.8}{7.5}{22.5}
\psarcn(0,0){1}{22.5}{7.5}
\psarc(0,0){0.8}{7.5}{22.5}}
\psarc[linewidth=1pt,fillstyle=solid,fillcolor=white](0,0){0.8}{7.5}{22.5}

\dataplot[linecolor=blue,linewidth=0.8pt,plotstyle=line]{\mydata}
\dataplot[linecolor=blue,linewidth=0.8pt,plotstyle=line]{\mydatb}

\psdots[linecolor=blue,dotsize=5pt](0.916,0.182)

\rput(-0.07,0.5){$1/2$}
\rput(0.5,-0.07){$1/2$}
\rput(0.98,-0.07){$1$}
\rput(0.55,0.5){$I_\theta(r)=0$}
\rput(1.25,0.2){$R_r(\theta)=0$}
\rput(0.68,0.25){$\mathcal R_{0,1}$}

\end{pspicture}
\caption{The polar rectangle $\mathcal R_{0,1}$ containing the zero $w(0,1) \approx 0.916+0.182i$}\label{rfig}
\end{center}
\end{figure}
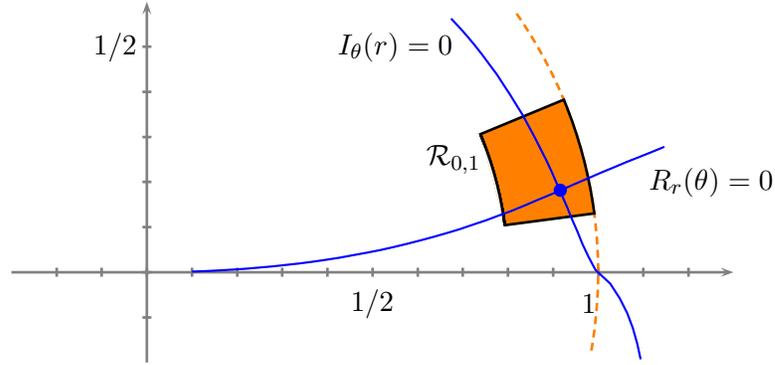

We may therefore define a function $g(\theta)=g_B(\theta)$, with domain $(-\pi,0) \cup (0,\pi)$, equalling the unique $r$ such that $I_\theta(r)=0$. This lets us parameterize the curve where $\Im\phi_{A,B}(z)=0$ for $z\in \C-\R$ as $(g(\theta),\theta)$ in polar coordinates. Recall $\kappa \approx 1.015$ defined in \eqref{kap}.

\begin{prop} \label{yu}
The function $g(\theta)$ is smooth and satisfies
\begin{alignat}{2}
    1 < & g(\theta) < \exp\left(\frac{\kappa}{\pi(2B-1)}\right) \qquad & & \text{ for } \quad \theta \in (-\pi,0), \label{b1}\\
   \exp\left(\frac{-\kappa}{\pi(2B)}\right) < & g(\theta) <1  \qquad & & \text{ for } \quad \theta \in (0,\pi). \label{b2}
\end{alignat}
\end{prop}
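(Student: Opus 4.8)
\textbf{Proof proposal for Proposition \ref{yu}.}

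The plan is to exploit the implicit definition of $g(\theta)$ through $I_\theta(g(\theta))=0$. Smoothness comes first: since $I_\theta(r)$ is jointly smooth in $(r,\theta)$ on the relevant region (away from $z\in\R$) and $\partial I/\partial r = \frac1r(2\pi B - \arg(1-re^{i\theta})) > 0$ by \eqref{plrd2} (because $|\arg(1-re^{i\theta})| < \pi \leqslant 2\pi B$), the implicit function theorem applied to $I_\theta(r)=0$ gives that $g$ is smooth, with $g'(\theta) = -(\partial I/\partial\theta)/(\partial I/\partial r)$ evaluated at $r=g(\theta)$.

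Next I would pin down whether $g(\theta)$ lies above or below $1$. Evaluate $I_\theta$ at $r=1$: by \eqref{imc} we have $\Im\li(e^{i\theta}) = \cl(\theta)$ and $\log 1 = 0$, so $I_\theta(1) = \cl(\theta)$. Since $\cl$ is negative on $(0,\pi)$ and positive on $(-\pi,0)$ (it is odd and positive on $(0,\pi)$ — more precisely $\cl(\theta) = \Im\li(e^{i\theta}) = \sum \sin(n\theta)/n^2 > 0$ for $\theta\in(0,\pi)$), and since $I_\theta$ is strictly increasing in $r$ with $I_\theta(g(\theta))=0$, we get $g(\theta) > 1$ when $I_\theta(1) < 0$, i.e. for $\theta\in(-\pi,0)$, and $g(\theta) < 1$ for $\theta\in(0,\pi)$. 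That establishes the inner bound in \eqref{b1} and the outer bound in \eqref{b2}.

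For the remaining bounds I would estimate $I_\theta$ at the claimed endpoints and use monotonicity again. Write $r = e^t$, so $I_\theta(e^t) = \Im\li(e^{t+i\theta}) + 2\pi B t$. The term $\Im\li(e^{t+i\theta})$ should be controlled using \eqref{plrd} (integrating $\frac{d}{dr}\Im\li(re^{i\theta}) = -\frac1r\arg(1-re^{i\theta})$ from $r=1$), together with the bound $|\arg(1-re^{i\theta})| < \pi$ for $\theta\in(0,\pi)$ and the sharper bound $|\arg(1-re^{i\theta})| \leqslant \pi - \theta$ or similar near the cut; combining this with $|\cl(\theta)| \leqslant \kappa$ from \eqref{kap} gives, for $\theta\in(0,\pi)$ and $t = -\kappa/(2\pi B)$, that $\Im\li(e^{t+i\theta}) \geqslant \cl(\theta) - \pi|t| \geqslant -\kappa - \kappa/(2B) $... — actually the clean route is: for $t<0$, $\Im\li(e^{t+i\theta}) = \cl(\theta) + \int_1^{e^t}(-\frac1r\arg(1-re^{i\theta}))\,dr$, and on $(0,\pi)$ one has $\arg(1-re^{i\theta})<0$ so the integral is negative, giving $\Im\li(e^{t+i\theta}) \geqslant \cl(\theta) - \pi|t| \geqslant -\kappa + 2\pi B t$ is too weak; instead use that $|\arg(1-re^{i\theta})|$ is bounded by something integrating to at most $\kappa$ over $(0,\infty)$ — precisely, from \eqref{op} with $\sgn\theta=1$, $I_\theta(r) = -\Im\li(\frac1r e^{-i\theta}) + (2\pi B + \pi-\theta)\log r$, and $|\Im\li(\frac1r e^{-i\theta})| = |\cl(-\theta) + (\text{correction})| $; the point is that $\Im\li$ of a point on the unit circle is $\cl$, bounded by $\kappa$, and off it the monotonicity in $r$ plus \eqref{eqq2}-type bounds keep it controlled. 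I would then evaluate $I_\theta$ at $\log r = -\kappa/(2\pi B)$: the $\log r$ term contributes $-\kappa$ (roughly), which dominates the $\Im\li$ contribution bounded by $\kappa$ in absolute value, forcing $I_\theta < 0$ there, hence $g(\theta) > e^{-\kappa/(2\pi B)}$. The same scheme with \eqref{op} (where the coefficient of $\log r$ is $2\pi B + \pi - \theta < 2\pi B + \pi$, and one needs $2\pi(2B-1)$... ) handles $\theta\in(-\pi,0)$: here $\sgn\theta = -1$, the coefficient of $\log r$ in \eqref{op} is $2\pi B - \pi - \theta$, and at $\log r = \kappa/(\pi(2B-1))$ one checks $I_\theta > 0$.

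The main obstacle I anticipate is getting the $\Im\li$ terms bounded by exactly $\kappa$ rather than a larger constant: one must use that $\Im\li$ evaluated along rays, not just on the circle, stays within $[-\kappa,\kappa]$ — or more carefully, that the relevant combination appearing in \eqref{op} does. This likely requires showing $\Im\li(se^{i\alpha})$ for $s\in(0,1)$, $\alpha\in(-\pi,\pi)$ has the same sign behaviour and magnitude control as $\cl$, which follows from $\frac{d}{ds}\Im\li(se^{i\alpha}) = -\frac1s\arg(1-se^{i\alpha})$ having a fixed sign on each half and from $\Im\li(0)=0$, $\Im\li(e^{i\alpha}) = \cl(\alpha)$; so $\Im\li(se^{i\alpha})$ lies between $0$ and $\cl(\alpha)$, hence is bounded by $\kappa$ in absolute value. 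That observation, combined with plugging the endpoint values of $\log r$ into \eqref{plr} or \eqref{op} and invoking strict monotonicity of $I_\theta$ in $r$, completes all four inequalities.
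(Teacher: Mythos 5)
Your proposal is correct and follows essentially the same route as the paper's proof: implicit function theorem via $\partial I/\partial r > 0$ for smoothness; the observation $I_\theta(1) = \cl(\theta)$ together with strict monotonicity in $r$ to get the sign of $g(\theta)-1$; and the bound $|\cl| \lqs \kappa$ combined with equation \eqref{op} (for $\theta<0$) and the direct form \eqref{plr} (for $\theta>0$) for the remaining two inequalities. The observation you settle on at the end — that $\Im\li(se^{i\alpha})$ for $s \in (0,1)$ lies between $0$ and $\cl(\alpha)$, because $\Im\li(0)=0$, $\Im\li(e^{i\alpha})=\cl(\alpha)$, and $\frac{d}{ds}\Im\li(se^{i\alpha})$ has a fixed sign — is exactly the ingredient the paper uses (phrased there as "$\Im\li(te^{-i\theta})$ is increasing in $t$"); the earlier detours in your write-up through coarse $|\arg(1-re^{i\theta})|<\pi$ bounds and an \eqref{eqq2}-style estimate aren't needed once that observation is in place, and you correctly abandon them. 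The only cosmetic difference is that the paper rearranges $I_\theta(r)=0$ to bound $\log r$ directly, while you evaluate $I_\theta$ at the candidate endpoint and read off its sign; the two are equivalent.
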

\begin{proof}
For any $\theta_0$ in the domain of $g$, set $r_0=g(\theta_0)$. Then $(r_0, \theta_0)$ is a solution to $I_\theta(r)=0$. By the implicit function theorem, $r$ satisfying $I_\theta(r)=0$ is a function of $\theta$  in a neighborhood of $\theta_0$ provided
\begin{equation} \label{alr}
   \left. \frac{\partial I}{\partial r}\right|_{(r,\theta)=(r_0,\theta_0)} \neq 0.
\end{equation}
As we have already seen with \eqref{plrd2}, the left side of \eqref{alr} is always $>0$. Thus, the implicit function theorem confirms that $r$ is a function of $\theta$, and that $g$ is differentiable as many times as $I$ is. Hence $g$ is a smooth function of $\theta$. By implicit differentiation we have for example, using \eqref{plrd2}, \eqref{plrd3},
\begin{equation}\label{imp}
    g'(\theta) = -\frac{\partial I}{\partial \theta}\Big/ \frac{\partial I}{\partial r} = r \frac{\log |1-r e^{i \theta}|}{2\pi B -  \arg(1-r e^{i \theta})} \qquad \quad \text{for $\quad r=g(\theta)$}.
\end{equation}

Next we prove the bounds \eqref{b1}, \eqref{b2}.
For $r=1$ we have $I_\theta(1)=\cl(\theta)$. Since we have seen that $I_\theta$ is a strictly increasing function of $r$ it follows that, see Figures \ref{bfig} and \ref{rfig},
\begin{alignat}{3}
    \theta & \in (-\pi,0) &  \quad \implies \cl(\theta) & <0  &  \quad \implies g(\theta) & > 1,\label{a1} \\
    \theta & \in (0,\pi) &  \quad \implies \cl(\theta) & >0  &  \quad \implies g(\theta) & < 1 \label{a2}
\end{alignat}
giving the lower bound in \eqref{b1} and the upper bound in \eqref{b2}.

For $\theta  \in (-\pi,0)$, $I_\theta(r)=0$ implies that $r>1$ by \eqref{a1} and, employing \eqref{op},
\begin{equation*}
    (2\pi B - \pi - \theta) \log r   = \Im \li\left(\frac 1r e^{-i \theta}\right)
         < \Im \li\left(1 e^{-i \theta}\right)
         \lqs \kappa
\end{equation*}
since $\Im \li\left(t e^{-i \theta}\right)$ is increasing in $t$ as we saw after \eqref{plrd}.
The upper bound in \eqref{b1} follows.
For $\theta  \in (0,\pi)$, $I_\theta(r)=0$ implies that $r<1$ by \eqref{a2} and so
\begin{equation*}
        2\pi   B  \log r  = -\Im \li(r e^{i \theta})
         > -\Im \li(1 e^{i \theta})
         \gqs -\kappa.
\end{equation*}
This gives the lower bound in \eqref{b2} and completes the proof.
\end{proof}

With Proposition \ref{yu}, we may restrict our attention to $r$ in the interval $(0.8,2)$ since
\begin{equation*}
    0.85 \approx \exp\left(\frac{-\kappa}{2\pi}\right) < r < \exp\left(\frac{\kappa}{\pi}\right) \approx 1.38
\end{equation*}
corresponding to \eqref{b1}, \eqref{b2} with $B=1$.

For $A$, $B$  fixed, write the real part of \eqref{mainp} as the function
\begin{equation} \label{plrxx}
    R_r(\theta):=\Re \li(r e^{i \theta}) +4 \pi^2 A -   2\pi   B  \theta   .
\end{equation}
We see with \eqref{wind2} that $R_r(\theta)$ is a continuous function of $\theta \in [-\pi, \pi]$. It is always smooth for $\theta \in (-\pi,0) \cup (0,\pi)$ but it is not differentiable at $\theta=0$ when $r\gqs 1$.
Similarly to \eqref{plrd2}, \eqref{plrd3} we have
\begin{align} \label{rr2}
 \frac{\partial R}{\partial \theta} & = -2\pi B +  \arg(1-r e^{i \theta}),\\
 \frac{\partial  R}{\partial r} & =  -  \frac 1r\log |1-r e^{i \theta}|. \label{rr3}
\end{align}

\begin{lemma}
Fix $B \in \Z_{\gqs 1}$ and $r$ in the interval $(0.8,2)$.
\begin{itemize}
\item If $-B/2<A \lqs B/2$, then  there exists a unique $\theta \in (-\pi,0) \cup (0,\pi)$ so that $R_r(\theta)=0$.
\item Otherwise, if $A \lqs -B/2$ or $ B/2<A$, then  there is no solution to $R_r(\theta)=0$ with $\theta \in (-\pi,0) \cup (0,\pi)$.
\end{itemize}
\end{lemma}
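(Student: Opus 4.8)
The plan is to show that, for each fixed $r$ in the stated range, $R_r$ extends to a continuous, strictly decreasing function on the \emph{closed} interval $[-\pi,\pi]$, and then to count its zeros from the signs at the endpoints $\theta=\pm\pi$ (and, when a zero exists, at $\theta=0$). For monotonicity, \eqref{rr2} gives $\frac{\partial R}{\partial\theta}=-2\pi B+\arg(1-re^{i\theta})$, and since the principal argument satisfies $\arg(1-re^{i\theta})\lqs\pi<2\pi B$ for $B\gqs 1$, this is $<0$ on $(-\pi,0)$ and on $(0,\pi)$. The only point needing care is $\theta=0$, which is a corner of $R_r$ exactly when $r\gqs 1$; here I would note that by \eqref{wind2} the jump of $\li$ across $[1,\infty)$ is purely imaginary, so $\Re\li(re^{i\theta})$, and hence $R_r$, is in fact continuous at $\theta=0$. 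A function that is continuous on $[-\pi,\pi]$ and strictly decreasing on each of $[-\pi,0]$ and $[0,\pi]$ is strictly decreasing on all of $[-\pi,\pi]$; in particular it has at most one zero there, which already yields uniqueness.

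For the count I would evaluate the endpoints. At $\theta=\pm\pi$ we have $re^{i\theta}=-r\in(-\infty,1]$, so $\li(-r)$ is real (Lemma \ref{im0}); moreover $\li$ is increasing on $(-\infty,1)$ by \eqref{ddx}, so $\li(-r)<\li(0)=0$, and $\li(-r)>-2\pi^2$ on $(0,2)$ by \eqref{eqq2} for $r\lqs 1$ and \eqref{eqq3} for $1<r<2$ — this is the one place the bound $r<2$ is used. Hence
\[
R_r(\pm\pi)=\li(-r)+4\pi^2 A\mp 2\pi^2 B=2\pi^2(2A\mp B)+\li(-r),
\]
and since $-2\pi^2<\li(-r)<0$ and $A,B$ are integers, one reads off that $R_r(-\pi)>0$ when $A>-B/2$ and $R_r(-\pi)<0$ when $A\lqs -B/2$, while $R_r(\pi)<0$ when $A\lqs B/2$ and $R_r(\pi)>0$ when $A>B/2$. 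A continuous strictly decreasing function on $[-\pi,\pi]$ has a (necessarily unique, interior) zero iff it is positive at $-\pi$ and negative at $\pi$, which happens exactly when $-B/2<A\lqs B/2$. In the complementary cases there is no zero: if $A\lqs -B/2$ then $R_r\lqs R_r(-\pi)<0$ throughout, and if $A>B/2$ then $R_r\gqs R_r(\pi)>0$ throughout.

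Finally, when $-B/2<A\lqs B/2$ I must check the unique zero is not at $\theta=0$, since the statement asks for $\theta\in(-\pi,0)\cup(0,\pi)$. Here $R_r(0)=\Re\li(r)+4\pi^2 A$, and a short estimate — from \eqref{def0} for $r\lqs 1$, and from \eqref{dilog1} (writing $\Re\li(r)$ in terms of the real quantity $\li(1/r)$) for $1<r<2$ — shows $0<\Re\li(r)<4\pi^2$ on $(0.8,2)$, so $\Re\li(r)\neq -4\pi^2 A$ for every integer $A$; thus $R_r(0)\neq 0$ and the zero lies in $(-\pi,0)\cup(0,\pi)$. I expect the main obstacle to be the bookkeeping near $\theta=0$ when $r\gqs 1$ — establishing continuity and monotonicity of $R_r$ across the corner and verifying $R_r(0)\neq 0$ — together with pinning down the elementary bounds $-2\pi^2<\li(-r)<0$ and $0<\Re\li(r)<4\pi^2$ on $(0.8,2)$; once those are in hand, everything else is a direct sign computation.
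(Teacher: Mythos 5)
Your argument is correct and follows the paper's approach: use $\partial R/\partial\theta<0$ from \eqref{rr2} to get strict monotonicity in $\theta$, then read the existence and uniqueness of the zero from the signs of $R_r(\pm\pi)$ via a bound on $\li(-r)$ for $r\in(0.8,2)$ (you use the cruder $-2\pi^2<\li(-r)<0$, the paper uses $\Re\li(-2)<\Re\li(-r)<\Re\li(-0.8)$ together with \eqref{cond}, but both suffice for the integer dichotomy on $A,B$). In fact you are slightly more careful than the paper in explicitly verifying $R_r(0)\neq 0$, which is genuinely needed since the lemma locates the zero in $(-\pi,0)\cup(0,\pi)$ rather than $(-\pi,\pi)$; the paper's proof leaves that small step implicit.
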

\begin{proof}
With \eqref{rr2} we have
$ \frac{\partial  R}{\partial \theta}  <0$ for $\theta \in (-\pi,0) \cup (0,\pi)$
so that $R_r$ is a strictly decreasing function of $\theta$. Therefore
\begin{equation*}
    R_r(\pi)< R_r(\theta) < R_r(-\pi) \quad \text{ for } \quad -\pi< \theta < \pi
\end{equation*}
where
\begin{align} \label{sr2}
 R_r(\pi) & = \Re \li(-r ) +4 \pi^2 A-   2\pi^2   B,  \\
 R_r(-\pi) & = \Re \li(-r ) +4 \pi^2 A +   2\pi^2   B  . \label{sr3}
\end{align}
Hence, for $r>0$ fixed, $R_r(\theta)=0$ has a solution $\theta$ if and only if $R_r(\pi)<0$ and $R_r(-\pi)>0$. With \eqref{sr2}, \eqref{sr3} these conditions are equivalent to
\begin{equation}\label{cond}
    -\frac{B}{2} < A + \frac{\Re \li(-r )}{4 \pi^2} < \frac{B}{2}.
\end{equation}
Note that $\Re \li(x)$ is increasing by \eqref{ddx} so that
\begin{equation*}
-1.44 \approx \Re \li(-2 ) < \Re \li(-r ) < \Re \li(-0.8 ) \approx -0.68,
\end{equation*}
and the result follows.
\end{proof}

For $A$, $B \in \Z$ with $-B/2<A\lqs B/2$, we may therefore define a function $h(r)=h_{A,B}(r)$, with domain $(0.8,2)$, equalling the unique $\theta \in (-\pi,0) \cup (0,\pi)$ such that $R_r(\theta)=0$. Then $(r,h(r))$ parameterizes a part of the curve defined by $\Re \, \phi_{A,B}(z)=0$.

\begin{prop} \label{yu2}
For $r\in (0.8,2)$ the function $h(r)=h_{A,B}(r)$ is smooth and satisfies
\begin{equation}\label{hb}
    \frac{\pi}B \left(2A-\frac 18 \right) < h_{A,B}(r) <  \frac{\pi}B\left(2A+\frac 18 \right) \qquad (-B/2<A<B/2).
\end{equation}
For $A=0$ we have the improvement on \eqref{hb}
\begin{equation}\label{hb3}
    \frac{\pi}{24B} < h_{0,B}(r) <  \frac{\pi}{8B}
\end{equation}
and for $A=B/2$ with $B$ even
\begin{equation}\label{hb2}
    \pi -\frac{\pi}{8B}  < h_{B/2,B}(r) <  \pi.
\end{equation}
\end{prop}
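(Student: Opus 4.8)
The plan is to mimic the argument just used for $g(\theta)$ in Proposition \ref{yu}. First I would invoke the implicit function theorem exactly as before: since $\partial R/\partial\theta < 0$ on $(-\pi,0)\cup(0,\pi)$ by \eqref{rr2}, the relation $R_r(\theta)=0$ determines $\theta$ as a function of $r$ locally, and $h$ inherits the smoothness of $R$; implicit differentiation gives $h'(r) = -(\partial R/\partial r)\big/(\partial R/\partial\theta)$, which I will record for completeness even though the bounds do not strictly require it. The substantive content is the two-sided inequality \eqref{hb}, and the natural way to get it is to \emph{evaluate $R_r$ at the two candidate endpoints} $\theta = \tfrac{\pi}{B}(2A\pm\tfrac18)$ and show $R_r$ has opposite signs there; since $R_r$ is strictly decreasing in $\theta$, the unique zero $h_{A,B}(r)$ must lie strictly between them.

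Concretely: write $R_r(\theta) = \Re\li(re^{i\theta}) + 4\pi^2 A - 2\pi B\theta$. At $\theta_\pm := \tfrac{\pi}{B}(2A\pm\tfrac18)$ the linear part $4\pi^2 A - 2\pi B\theta_\pm$ equals $\mp\tfrac{\pi^2}{4}$, so $R_r(\theta_\pm) = \Re\li(re^{i\theta_\pm}) \mp \tfrac{\pi^2}{4}$. Thus it suffices to show $|\Re\li(re^{i\theta})| < \tfrac{\pi^2}{4}$ for all $r\in(0.8,2)$ and all relevant $\theta$; then $R_r(\theta_+) < 0 < R_r(\theta_-)$ as needed. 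To bound $\Re\li(re^{i\theta})$ I would use monotonicity in the radial direction, as established after \eqref{plrd}: $\Re\li(te^{i\theta})$ is monotone in $t$ (increasing for $t$ with $\arg(1-te^{i\theta})<0$ type behavior — more precisely $\tfrac{d}{dt}\Re\li(te^{i\theta}) = -\tfrac1t\log|1-te^{i\theta}|$), so the extreme values over $t\in(0.8,2)$ are attained at the endpoints or where $|1-te^{i\theta}|=1$; comparing with the unit circle, where $\Re\li(e^{i\theta}) = \pi^2 B_2(\theta/2\pi - \lfloor\cdot\rfloor)$ by \eqref{imc2}, and noting $|\pi^2 B_2(x)| \le \pi^2/6 < \pi^2/4$ for $x\in[0,1]$, gives the bound on the circle; a short estimate via \eqref{eqq2}-type bounds or the functional equation \eqref{dilog1} handles $r$ away from $1$. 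For the sharper statements \eqref{hb3} and \eqref{hb2} I would repeat the same evaluation at $\theta = \tfrac{\pi}{24B}$ and $\tfrac{\pi}{8B}$ (resp.\ $\pi - \tfrac{\pi}{8B}$ and $\pi$), where the linear part is $-\tfrac{\pi^2}{12}$ and $-\tfrac{\pi^2}{4}$ (resp.\ $4\pi^2\cdot\tfrac B2 - 2\pi B(\pi-\tfrac{\pi}{8B}) = \tfrac{\pi^2}{4}$ and $0$), so one needs $\Re\li(re^{i\theta})$ to be correspondingly small and positive for small positive $\theta$, which follows since $\Re\li$ near $z=1^+$ on a ray slightly above the real axis is close to $\li(1)=\pi^2/6$ but the precise interval forces it between $\pi^2/12$ and $\pi^2/6 < \pi^2/4$; for $A=0$ one also uses that $\Re\li(re^{i\theta}) > 0$ when $\theta$ is small, which is clear from \eqref{imc2} near $x=0$ together with the radial monotonicity. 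In the $A=B/2$ case the relevant comparison is with $\Re\li(-r)<0$, which immediately gives $R_r(\pi) < \tfrac{\pi^2}{4} + 4\pi^2\cdot\tfrac B2 - 2\pi^2 B$ — wait, rather $R_r(\pi) = \Re\li(-r) < 0$ after the linear terms cancel to $0$, forcing $h_{B/2,B}(r) < \pi$, while at $\theta = \pi - \tfrac{\pi}{8B}$ the value is $\Re\li(re^{i\theta}) + \tfrac{\pi^2}{4} > 0$ since $|\Re\li| < \pi^2/4$.

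The main obstacle I anticipate is the explicit numerical estimate $|\Re\li(re^{i\theta})| < \pi^2/4 \approx 2.47$ uniformly over the annulus $0.8 < r < 2$ and over the small arcs of $\theta$ that actually occur — this is where the argument is genuinely quantitative rather than soft. The cleanest route is: (i) on $r=1$ use \eqref{imc2} to get the exact value $\pi^2 B_2(\theta/2\pi)$, bounded by $\pi^2/6$; (ii) for $r \ne 1$ use the radial derivative formula $\tfrac{d}{dr}\Re\li(re^{i\theta}) = -\tfrac1r\log|1-re^{i\theta}|$ and bound $|\log|1-re^{i\theta}||$ on the annulus, or else apply \eqref{dilog1} to reduce $r>1$ to $1/r<1$ where \eqref{eqq2} applies directly. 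One must be slightly careful that the relevant $\theta$-values $\tfrac{\pi}{B}(2A\pm\tfrac18)$ genuinely lie in $(-\pi,0)\cup(0,\pi)$ under the hypothesis $-B/2 < A < B/2$ (they do, with a margin), and that for $r>1$ one stays off the branch cut issue at $\theta = 0$ — but the endpoints in question are bounded away from $0$ except in the $A=0$ case, where $\Re\li(re^{i\theta})$ for $r>1$ and $\theta$ slightly positive is controlled by continuity from \eqref{wind2} and is positive, so the sign analysis still goes through.
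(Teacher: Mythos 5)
Your structure matches the paper's: implicit function theorem for smoothness, then bracketing the unique zero of the decreasing function $R_r(\theta)$ by evaluating at the endpoint angles $\theta_\pm = \tfrac{\pi}{B}(2A\pm\tfrac18)$, where the affine part of $R_r$ is $\mp\pi^2/4$, so everything reduces to showing $|\Re\li(re^{i\theta})| < \pi^2/4$ on the relevant region (the paper's estimate \eqref{ebn}). The difficulty is that the route you sketch to \eqref{ebn} does not actually produce a bound this sharp. The series estimate \eqref{eqq2} gives $|\li(z)| \lqs |z|\pi^2/6 < \pi^2/6$ only for $|z|\lqs 1$. For $1 < r < 2$ you propose transferring through \eqref{dilog1}; writing $\li(re^{i\theta}) = -\li(e^{-i\theta}/r) - \pi^2/6 - \tfrac12\log^2(-re^{i\theta})$ and taking real parts gives
\begin{equation*}
\Re\li(re^{i\theta}) = -\Re\li(e^{-i\theta}/r) - \frac{\pi^2}{6} - \frac12\Bigl(\log^2 r - (\theta\mp\pi)^2\Bigr),
\end{equation*}
and bounding $|\Re\li(e^{-i\theta}/r)| < \pi^2/6$ together with $(\theta\mp\pi)^2 < \pi^2$ only yields $\Re\li(re^{i\theta}) < \pi^2/2$ from above and roughly $-\pi^2/3 - \tfrac12\log^2 2$ from below, both of which miss $\pi^2/4 \approx 2.47$. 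The bound is genuinely tight: $\Re\li(2) = \pi^2/4$ exactly.

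The paper's proof closes this by exploiting the monotonicity structure of $\Re\li$ on the disk: from $\tfrac{d}{dr}\Re\li(re^{i\theta}) = -\tfrac1r\log|1-re^{i\theta}|$ it is increasing for $r < 2\cos\theta$ and decreasing for $r > 2\cos\theta$, and from \eqref{rr2} it is strictly decreasing in $|\theta|$. Hence on $\{r<2,\,|\theta|\lqs\pi\}$ the maximum is attained at $(r,\theta)=(2,0)$ with value $\pi^2/4$, while the minimum is $\Re\li(-2) > -\pi^2/4$; this gives \eqref{ebn} directly. You already state the radial derivative and observe the critical circle $|1-re^{i\theta}|=1$, so you have the ingredients, but the step "comparing with the unit circle, where $\Re\li$ is bounded by $\pi^2/6$" does not control the extremum, which lies on $r=2$, not $r=1$. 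Similarly, your justification of the lower bound $\Re\li > \pi^2/12$ for the $A=0$ case is asserted but not pinned down; the paper fixes this with the explicit monotone comparison $\Re\li(re^{i\theta}) > \Re\li(0.8\,e^{-i\pi/8}) > \pi^2/12$ on $r\in(0.8,2)$, $|\theta|<\pi/8$. Your treatment of the $A=B/2$ case (cancellation of affine terms at $\theta=\pi$ giving $R_r(\pi)=\Re\li(-r)<0$, plus \eqref{ebn} at $\theta = \pi - \tfrac{\pi}{8B}$) is fine and matches the paper.
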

\begin{proof}
For any $r_0$ in the domain of $h$, set $\theta_0=h(r_0)$. Then $(r_0,\theta_0)$ is a solution to $R_r(\theta)=0$. By the implicit function theorem, $\theta$ satisfying $R_r(\theta)=0$ is a function of $r$  in a neighborhood of $r_0$ since
\begin{equation} \label{alr2}
   \left. \frac{\partial R}{\partial \theta}\right|_{(r,\theta)=(r_0,\theta_0)} < 0
\end{equation}
by \eqref{rr2}. The theorem also says that $h$ is differentiable as many times as $R$ is. Hence $h$ is a smooth function of $r$. By implicit differentiation we have for example, using \eqref{rr2}, \eqref{rr3},
\begin{equation}\label{imph}
    h'(r) = -\frac{\partial R}{\partial r}\Big/ \frac{\partial R}{\partial \theta} = -\frac 1r \frac{\log |1-r e^{i \theta}|}{2\pi B -  \arg(1-r e^{i \theta})}  \qquad \quad \text{for $\quad \theta=h(r)$}.
\end{equation}

The bounds \eqref{hb}, \eqref{hb3} and \eqref{hb2} follow from \eqref{plrxx} and  estimates for $\Re \li(r e^{i \theta})$ which we work out next.
First we note that $\Re \li(r e^{i \theta})$ is continuous for $r\gqs 0$ and $-\pi \lqs \theta \lqs \pi$. As in Lemma \ref{im0} we can show $\Re \li(r e^{-i \theta}) = \Re \li(r e^{i \theta})$. Also $\Re \li(r e^{i \theta})$ is strictly decreasing as a function of $\theta$ when $0\lqs \theta \lqs \pi$ and as a function of $r$ it is increasing for $r<2\cos \theta$ and decreasing for $r>2\cos \theta$.

The maximum value of $\Re \li(r e^{i \theta})$ occurs at $(r,\theta)=(2,0)$ and equals $\pi^2/4$. Since $\Re \li(-2)>-\pi^2/4$ it follows that
\begin{equation} \label{ebn}
    |\Re \li(r e^{i \theta})| < \pi^2/4 \qquad ( r < 2).
\end{equation}
Then \eqref{hb} is implied by \eqref{plrxx} and  \eqref{ebn}. For $A=0$ this means $-\frac{\pi}{8B} < h_{0,B}(r) <  \frac{\pi}{8B}$. However, for $r \in (0.8,2)$ and $-\pi/8<\theta<\pi/8$ we have $\Re \li(r e^{i \theta}) > \Re \li(0.8 e^{-i \pi/8})> \pi^2/12$. This gives the improved lower bound in \eqref{hb3}. Finally, \eqref{hb2} is a consequence of
\begin{equation}\label{web}
    \Re \li(r e^{i \theta})   <0     \quad \text{ for } \quad r>0, \ |\theta|  \gqs \pi/2.
\end{equation}
The bound \eqref{web} is true since $\Re \li(r e^{i \pi/2})$ is decreasing from the value $0$ at $r=0$.
\end{proof}

Let $B \in \Z_{\gqs 1}$. For each $A \in \Z$ satisfying $-B/2<A\lqs B/2$, based on the bounds in Propositions \ref{yu} and \ref{yu2}, define the following polar rectangles:
\begin{equation*}
    \mathcal R_{A,B}  := \Bigl\{ (r,\theta) \ \Big | \ r_1 \lqs r \lqs r_2, \ \theta_1 \lqs \theta \lqs  \theta_2 \Bigr\}
\end{equation*}
where
\begin{align*}
    r_1 = r_1(A,B) & := \begin{cases}1 & \text{\quad for \quad } -B/2<A<0 \\ \exp(-\kappa/(2\pi B)) & \text{\quad for \quad } 0 \lqs A \lqs B/2 \end{cases}\\
   r_2 =  r_2(A,B) & := \begin{cases}\exp(\kappa/(\pi (2B-1))) & \text{\quad for \quad } -B/2<A<0 \\ 1 & \text{\quad for \quad } 0 \lqs A \lqs B/2 \end{cases} \\
    \theta_1 = \theta_1(A,B) & := \begin{cases}\pi(2A-1/8)/B & \text{\quad for \quad } A \neq 0 \\ \pi/(24B) & \text{\quad for \quad } A=0 \end{cases}\\
   \theta_2 = \theta_2(A,B) & := \begin{cases}\pi(2A+1/8)/B & \text{\quad for \quad } A \neq B/2 \\ \pi & \text{\quad for \quad } A=B/2. \end{cases}
\end{align*}
For example, see Figure \ref{rfig} for $\mathcal R_{0,1}$.

\begin{theorem} \label{cal}
For $A \in \Z$ and $B \in \Z_{\gqs 1}$,  we have solutions to $\phi_{A,B}(z)=0$ if and only if $-B/2<A\lqs B/2$. For such a pair $A,B$ the solution $z$ is unique and contained in the interior of the polar rectangle $\mathcal R_{A,B}$.
\end{theorem}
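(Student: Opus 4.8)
The plan is to assemble the pieces already in place into a one-variable sign analysis along the curve where $\Re\,\phi_{A,B}$ vanishes. Recall from the lemma on real arguments that $\phi_{A,B}$ has no real zeros when $B\gqs 1$, so every zero is of the form $z=re^{i\theta}$ with $\theta\in(-\pi,0)\cup(0,\pi)$; then $\Im\,\phi_{A,B}(z)=0$ forces $r=g(\theta)$, which Proposition \ref{yu} confines to $(0.8,2)$, while $\Re\,\phi_{A,B}(z)=0$ reads $R_{g(\theta)}(\theta)=0$. This already settles the ``only if'' direction: if $A\lqs -B/2$ or $A>B/2$, the lemma describing when $R_r(\theta)=0$ is solvable shows there is no such $\theta$ with $r\in(0.8,2)$, so $\phi_{A,B}$ has no zero.

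Now assume $-B/2<A\lqs B/2$, so $h=h_{A,B}$ is defined on $(0.8,2)$. Since $g(\theta)\in(0.8,2)$, the relation $R_{g(\theta)}(\theta)=0$ is the same as $\theta=h(g(\theta))$, and putting $r=g(\theta)$ shows that the zeros of $\phi_{A,B}$ are exactly the points $z=re^{ih(r)}$ with $r\in(0.8,2)$ a zero of the smooth function
\[
    G(r):=I_{h(r)}(r)=\Im\li\!\bigl(re^{ih(r)}\bigr)+2\pi B\log r .
\]
(The map $r\mapsto re^{ih(r)}$ is injective since the modulus recovers $r$, so zero counts agree.) Differentiating with \eqref{plrd2}, \eqref{plrd3} and \eqref{imph}, I expect the terms to recombine into
\[
    G'(r)=\frac1r\left[\Bigl(2\pi B-\arg(1-re^{ih(r)})\Bigr)+\frac{\bigl(\log|1-re^{ih(r)}|\bigr)^2}{2\pi B-\arg(1-re^{ih(r)})}\right],
\]
which is strictly positive because $re^{ih(r)}\notin\R$ makes $2\pi B-\arg(1-re^{ih(r)})>\pi(2B-1)>0$. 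Hence $G$ is strictly increasing on the interval $(0.8,2)$ and has at most one zero, which gives uniqueness.

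For existence, evaluate $G$ at the radial endpoints $r_1,r_2$ of $\mathcal R_{A,B}$, both of which lie in $(0.8,2)$. Proposition \ref{yu2} places $h(r_1),h(r_2)$ in $(\theta_1,\theta_2)$, which is contained in $(0,\pi)$ when $A\gqs 0$ and in $(-\pi,0)$ when $A<0$; Proposition \ref{yu} then yields $r_1<g(h(r_1))$ and $g(h(r_2))<r_2$. Since each $I_\theta(\cdot)$ is strictly increasing and vanishes at $g(\theta)$,
\[
    G(r_1)=I_{h(r_1)}(r_1)<I_{h(r_1)}\!\bigl(g(h(r_1))\bigr)=0=I_{h(r_2)}\!\bigl(g(h(r_2))\bigr)<I_{h(r_2)}(r_2)=G(r_2),
\]
so by the intermediate value theorem $G$ has its unique zero $r^{\ast}\in(r_1,r_2)$. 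Then $z=r^{\ast}e^{ih(r^{\ast})}$ is the unique zero of $\phi_{A,B}$, and as $r^{\ast}\in(r_1,r_2)$ and $h(r^{\ast})\in(\theta_1,\theta_2)$ it lies in the interior of $\mathcal R_{A,B}$; its simplicity is recorded separately in Proposition \ref{simple}.

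The only delicate part is the bookkeeping showing that the bounds of Propositions \ref{yu} and \ref{yu2} match the endpoints $r_1,r_2,\theta_1,\theta_2$ of $\mathcal R_{A,B}$ in each of the cases $A=0$, $0<|A|<B/2$, and $A=B/2$ with $B$ even; in particular one checks $\theta_1>-\pi$ for $A$ near $-B/2$ and $\theta_2\lqs\pi$ throughout, using that $2A$ and $B$ are integers. I do not expect a genuine obstacle: the real work — the monotonicity of $I_\theta$ in $r$ and of $R_r$ in $\theta$, and the cancellation making $G'$ unconditionally positive — is already done.
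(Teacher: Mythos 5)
Your proof is correct, and it takes a genuinely different and arguably cleaner route than the paper. The paper also parameterizes the two curves $\{\Im\phi=0\}$ and $\{\Re\phi=0\}$ via $g$ and $h$, but then compares $h$ against an inverse $g^{-1}$. That forces a three-way case split on the sign of $\log|1-re^{i\theta}|$ — i.e. on whether the circle $r=2\cos\theta$ meets $\mathcal R_{A,B}$, which happens exactly when $A=\pm B/6$ — and in that boundary case $g'$ may vanish, so $g$ need not be invertible and the paper resorts to a somewhat informal argument (rotating the $(r,\theta)$ rectangle by $\pi/4$). Your composite $G(r)=I_{h(r)}(r)$ sidesteps all of this: the chain-rule computation gives
\[
G'(r)=\frac1r\left[\bigl(2\pi B-\arg(1-re^{ih(r)})\bigr)+\frac{\bigl(\log|1-re^{ih(r)}|\bigr)^2}{2\pi B-\arg(1-re^{ih(r)})}\right],
\]
where the first term is bounded below by $\pi(2B-1)>0$ and the second is a non-negative square over a positive denominator, so $G'>0$ \emph{unconditionally} — no case analysis, no inversion of $g$, and the sign of $\log|1-re^{ih(r)}|$ simply doesn't matter. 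Existence then follows from the sign change $G(r_1)<0<G(r_2)$ just as you write, using the box bounds from Propositions \ref{yu} and \ref{yu2}. The one place to be explicit is the ``bookkeeping'' you flag: one should confirm that for each admissible $(A,B)$ the interval $(\theta_1,\theta_2)$ sits entirely inside $(0,\pi)$ (when $A\gqs0$) or $(-\pi,0)$ (when $A<0$) — this follows from $A,B$ being integers with $-B/2<A\lqs B/2$, e.g. for $0<A<B/2$ one has $2A+1/8<B$ because $A\lqs (B-1)/2$ — so that Proposition \ref{yu} applies to $g$ at $h(r_1),h(r_2)$ and yields $r_1<g(h(r_1))$ and $g(h(r_2))<r_2$. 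With that verified, your argument is complete and in fact streamlines the paper's proof.
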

\begin{proof}
Let $r_1$, $r_2$, $\theta_1$, $\theta_2$ be defined as above, giving the boundaries of $\mathcal R_{A,B}$. With Propositions \ref{yu} and \ref{yu2} we have seen that there exist continuously differentiable functions $g$, $h$ with
\begin{equation}\label{doma}
    g:[\theta_1,\theta_2] \to (r_1,r_2), \qquad h:[r_1,r_2] \to (\theta_1,\theta_2)
\end{equation}
and
\begin{equation} \label{drvs}
    g'(\theta) =  r \cdot \frac{\log |1-r e^{i \theta}|}{2\pi B -  \arg(1-r e^{i \theta})}, \qquad h'(r) = -\frac 1r \cdot \frac{\log |1-r e^{i \theta}|}{2\pi B -  \arg(1-r e^{i \theta})}.
\end{equation}
Solutions to $\phi_{A,B}(z)=0$ must be points in the intersection
\begin{equation}\label{alcu}
(g(\theta),\theta)_{\theta_1\lqs \theta \lqs \theta_2} \cap (r,h(r))_{r_1\lqs r\lqs r_2}.
\end{equation}
The denominators $2\pi B -  \arg(1-r e^{i \theta})$ in \eqref{drvs} are positive and bounded below by $\pi(2B-1)$. We have $\log |1-r e^{i \theta}|=0$ if and only if $r=2\cos \theta$ and it is straightforward to show that the curve $r=2\cos \theta$ intersects $\mathcal R_{A,B}$ if and only if $A=\pm B/6$.

Suppose first that $-B/6<A<B/6$. Then we have $\log |1-r e^{i \theta}|<0$ for all  $(r,\theta) \in \mathcal R_{A,B}$ and therefore  $h'(r)>0$ and $g'(\theta)<0$ for all $r$ and $\theta$   in the domains \eqref{doma}. By the inverse function theorem, $g$ has an inverse, $g^{-1}(r)$, a strictly decreasing continuously differentiable function on $[g(\theta_1),g(\theta_2)]$. Set $f(\theta)$ to be the difference $h(r)-g^{-1}(r)$. Then $f$ is continuous and strictly increasing on $[g(\theta_1),g(\theta_2)]$ with $f(g(\theta_1))<0$ and $f(g(\theta_2))>0$. By the intermediate value theorem, there is a unique $r^*$ so that $f(r^*)=0$. Set $\theta^*=h(r^*)$. Then $(r^*,\theta^*)$ is the unique element of \eqref{alcu} and it lies in the interior of $\mathcal R_{A,B}$.

The cases with $A<-B/6$ or $A>B/6$ are handled very similarly to the above, the only difference being that now $g'(\theta)>0$ and $h'(r)<0$.

For the last case we have $A=\pm B/6$.
 With \eqref{drvs} we may easily show that $|g'(\theta)|$, $|h'(r)|<1$. Since $h'$ may be zero, it does not necessarily have an inverse.
 Consider $\mathcal R_{A,B}$ as a rectangle in the $r$ $\theta$ plane and rotate it about the origin, say, by an angle of $\pi/4$. The curves corresponding to the rotated curves $(r,h(r))$ and $(g(\theta),\theta)$ may now be expressed as graphs of functions (of positive and negative slope, respectively) and our previous argument applies.
\end{proof}

Theorem \ref{cal}  establishes the $B \neq 0$ case of Theorem \ref{ebt} (except for simplicity) and shows that when $\phi_{A,B}$ has a zero it is close to $e^{2\pi i A/B}$. In the next section, we find each zero more precisely.

\section{Newton's method for $\phi_{A,B}$} \label{66}

\begin{prop} \label{d1}
Let $L$  be a line in $\C$ that is a distance $r>0$ from the origin. Suppose $z$ and $c$ are two points  in the half plane that is bounded by $L$ and does not contain the origin. If the ray $(-\infty,0]$ intersects this half plane, we also assume $z$ and $c$ are on the same side of $(-\infty,0]$.
With these assumptions
\begin{equation*}
    \left| \log (z) - \log (c) - \frac{z-c}{c}\right| \lqs  |z-c|^2 \frac{1}{2r^2}.
\end{equation*}
\end{prop}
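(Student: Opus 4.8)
The plan is to prove this by the same contour-integral (Taylor remainder) technique used in Proposition \ref{d0}, but now applied to $\log$ rather than $\li$. Concretely, the first step is to write down the exact second-order Taylor expansion of $\log$ at $c$ with the Cauchy integral form of the remainder:
\begin{equation*}
    \log(z) = \log(c) + \frac{z-c}{c} + \frac{(z-c)^2}{2\pi i}\int_\Gamma \frac{\log(w)}{(w-c)^2(w-z)}\, dw,
\end{equation*}
where $\Gamma$ is a closed contour enclosing both $c$ and $z$ but avoiding the branch cut $(-\infty,0]$. The hypotheses are exactly what is needed to make such a contour exist: $z$ and $c$ lie in the open half-plane $H$ bounded by $L$ and not containing $0$, and (if the cut meets $H$) on the same side of the cut, so $H$ minus possibly a slit along $(-\infty,0]$ is simply connected and $\log$ is holomorphic there.

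Next I would deform $\Gamma$, exactly as in Proposition \ref{d0}: if $(-\infty,0]$ does not meet $H$ there is nothing to collapse onto and one simply pushes $\Gamma$ out to infinity — the integrand decays like $\log|w|/|w|^3$ so the contribution vanishes and the remainder integral is $0$, giving the trivial bound. If $(-\infty,0]$ does meet $H$, collapse $\Gamma$ onto the portion of the branch cut lying in $H$: a large arc and a small arc contribute nothing in the limit because $\log$ grows only logarithmically, leaving an integral of the jump of $\log$ (which is $2\pi i$, by \eqref{wind}) over the segment of $(-\infty,0]$ inside $H$. Parameterize that segment and bound it. The key geometric input is that for every point $w$ on $L$ or beyond it in $H$, and in particular for every $w$ on the relevant part of the cut, we have $|w-c|\gqs r$ and $|w-z|\gqs r$, since $L$ is at distance $r$ from the origin and $c,z$ are on the far side while the cut points being collapsed onto lie on the origin side; more precisely one arranges the estimate so that $|w-c|$ and $|w-z|$ are each at least $r$ plus the distance from $w$ along the cut. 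Then
\begin{equation*}
    \left|\frac{1}{2\pi i}\int \frac{2\pi i}{(w-c)^2(w-z)}\,dw\right| \lqs \int_0^\infty \frac{dt}{(r+t)^3} = \frac{1}{2r^2},
\end{equation*}
which is exactly the claimed constant.

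The main obstacle — really the only delicate point — is the geometry: verifying that after collapsing the contour onto $(-\infty,0]\cap H$ one genuinely has $|w-c|,|w-z|\gqs r+t$ where $t$ is arc-length measured into the cut from where it crosses $L$. This needs a short argument that the nearest point of the line $L$ to any such $w$ is at distance $\gqs r$ from $w$ (immediate, since $w$ is on the origin side of $L$ and $L$ is distance $r$ from $0$, but one wants the cleaner bound involving $t$), and that $c,z$ being in the closed half-plane on the other side of $L$ forces $|w-c|,|w-z|$ to exceed the distance from $w$ to $L$. I expect this to follow from a one-line convexity/projection observation. Once the geometry is pinned down, the rest is the same routine contour-collapse and the elementary integral $\int_0^\infty t^0 (r+t)^{-3}\,dt = 1/(2r^2)$ as above, so the write-up should closely mirror the proof of Proposition \ref{d0}.
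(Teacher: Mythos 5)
Your overall strategy matches the paper: express $\log(z)$ by its first-order Taylor expansion at $c$ with a Cauchy-integral remainder, deform the contour onto a path where the integrand has a controlled $2\pi i$ jump, and bound the resulting real integral by $\int_0^\infty (r+t)^{-3}\,dt = 1/(2r^2)$. But there is a genuine gap at exactly the place you flag as "the main obstacle."

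You propose to collapse the contour onto the branch cut $(-\infty,0]$ itself. This cannot give the claimed bound. The deformed contour necessarily hugs the \emph{entire} cut (above and below, with the jump of $2\pi i$), not just the portion on one side of $L$. When $(-\infty,0]$ intersects $H$, the part of the cut inside $H$ lies on the \emph{same} side of $L$ as $c$ and $z$, and nothing in the hypotheses keeps $c$ or $z$ away from that part of the cut: for example with $L=\{x=-r\}$, $H=\{x<-r\}$, one may take $c=-100+\delta i$ with $\delta$ tiny, so that the cut comes within distance $\delta$ of $c$. In that situation the inequality you assert, namely $|w-c|\gqs r+t$ with $t$ measured along the cut from $L$, is simply false, and the resulting integral is not bounded by $1/(2r^2)$ — it blows up as $\delta\to 0$. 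The convexity/projection observation you sketch gives a lower bound of $\operatorname{dist}(w,L)$ for $|w-c|$ only when $w$ and $c$ are on \emph{opposite} sides of $L$, which fails for cut points in $H$.

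The missing idea is the rotation step in the paper's proof. After letting the two rays above and below the cut coincide (jump $= 2\pi i$), one \emph{rotates} these coincident rays about the origin, carrying along the analytically continued values of $\log$ so the jump stays $2\pi i$, until the ray is perpendicular to $L$ and does not meet $L$. This rotation is a legitimate homotopy of the contour in the complement of $\{c,z\}$ (the "same side of the cut" hypothesis guarantees the rotating ray never passes through $c$ or $z$). After rotation the integration path $\{\alpha t : t\gqs 0\}$, $|\alpha|=1$, lies entirely in the half-plane containing the origin, perpendicular to $L$, so $\operatorname{dist}(\alpha t, L)=r+t$ and hence $|\alpha t-c|,\ |\alpha t-z|\gqs r+t$ for \emph{all} $t\gqs 0$. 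Only then does the estimate $\int_0^\infty(r+t)^{-3}\,dt = 1/(2r^2)$ apply. Without the rotation the proof does not go through.
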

\begin{proof}
As in Proposition \ref{d0}, express $\log(z)$ with the first two terms of its Taylor expansion at $c$ to get
\begin{equation*}
    \log (z) = \log (c) +\log'(c)(z-c) + \frac{(z-c)^2}{2\pi i}\int_C \frac{\log(w)}{(w-c)^2(w-z)} \, dw
\end{equation*}
with $C$ a positively oriented circular curve of radius $T$ containing $z$ and $c$, similar to Figure \ref{cir} but this time avoiding the branch cut $(-\infty,0]$. Let the horizontal paths above and below the cut coincide. The  values of $\log$  on these paths differ by $2\pi i$, as in \eqref{wind}. We now rotate these horizontal paths away from $z$ and $c$, (using values of the continued log that keep the difference on the two paths as $2\pi i$),
until they are perpendicular to $L$, not intersecting it. To keep track of the angle, suppose these paths now pass through $\alpha$  with $|\alpha|=1$. Letting $T \to \infty$ and $\varepsilon \to 0$ we find
\begin{equation} \label{hob}
    \frac{1}{2\pi i}\int_C \frac{\log(w)}{(w-c)^2(w-z)} \, dw = -\int_{0}^\infty \frac{\alpha}{(\alpha t-c)^2(\alpha t-z)} \, dt.
\end{equation}
Since $|\alpha t-c|$, $|\alpha t-z|  \gqs t+r$, the right side of \eqref{hob} is bounded in absolute value by $\int_{0}^\infty (t+r)^{-3} \, dt =1/(2r^2)$ as required.
\end{proof}

Define
\begin{equation*}
   M(s):=\left(\frac{2\log(4/3)}{s}+ 8\pi\right)\frac{1}{(4s+1)^2}+3.
\end{equation*}

\begin{prop} \label{d2}
Let $L$  be a line in $\C$ that is a distance $s>0$ from the point $1$. Suppose $z$ and $c$ are two points  in the half plane that is bounded by $L$ and does not contain $1$. If the ray $[1,\infty)$ intersects this half plane, we assume $z$ and $c$ are on the same side of $[1,\infty)$.
With these assumptions
\begin{equation*}
    \left| \li (z) - \li (c) - (z-c) \lip (c)\right| \lqs |z-c|^2 M(s).
\end{equation*}
\end{prop}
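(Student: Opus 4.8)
The plan is to combine the methods of Propositions~\ref{d0} and~\ref{d1}: the integrand is $\li$, as in Proposition~\ref{d0}, but the region is now a general half-plane, as in Proposition~\ref{d1}. First I would write the remainder in the second-order Taylor expansion of $\li$ at $c$ as a contour integral,
\[
\li(z)-\li(c)-(z-c)\lip(c)=\frac{(z-c)^2}{2\pi i}\int_C\frac{\li(w)}{(w-c)^2(w-z)}\,dw,
\]
where $C$ is a keyhole of large radius $T$ centred at the branch point $1$, slit along $[1,\infty)$, and enclosing $z$ and $c$. As in Proposition~\ref{d0}, the functional equations \eqref{dilog1}, \eqref{dilog2} show that $\li$ grows no faster than a power of $\log|w|$ at $\infty$ and is bounded near $1$, so the contributions of the large circle and of the small circle about $1$ vanish as $T\to\infty$, $\varepsilon\to0$; here one also uses that the hypotheses force $|z-1|,\,|c-1|\gqs s$, keeping the poles off the small circle. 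What survives is the integral of the jump $2\pi i\log(w)$ of $\li$ across $[1,\infty)$, supplied by \eqref{wind2}.

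The new ingredient, needed because $[1,\infty)$ may now pass close to $z$ or $c$, is to deform before taking limits: rotate the slit of $C$, continuing $\li$ across the original cut and keeping the difference of its two sides equal to the monodromy $2\pi i\log(w)$. Pivoting at $1$, and using the hypothesis that $z$ and $c$ lie on the same side of $[1,\infty)$ whenever that ray meets the half-plane — this fixes the sense of rotation, exactly as the corresponding hypothesis does in Proposition~\ref{d1} — I rotate the path until it is the ray from $1$ perpendicular to $L$ and pointing into the half-plane that does not contain $z$ and $c$ (taking a one-sided limit if this ray runs along the cut $(-\infty,0]$ of $\log$). Parametrising it as $w=1+\delta t$, $t\gqs0$, with $|\delta|=1$ perpendicular to $L$, the choice of $\delta$ gives $|1+\delta t-c|,\,|1+\delta t-z|\gqs s+t$ for every $t\gqs0$, so the remainder becomes $(z-c)^2$ times $\pm\int_0^\infty \log(1+\delta t)\,\delta\,\bigl((1+\delta t-c)^2(1+\delta t-z)\bigr)^{-1}\,dt$.

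It then remains to bound this integral by $M(s)$, which I would do by splitting the range at $t=1/4$. For $t\lqs 1/4$ one has $|1+\delta t|\gqs 3/4$, so $\bigl|\log|1+\delta t|\bigr|\lqs\log(4/3)$ while $|\arg(1+\delta t)|$ stays small; together with the lower bounds on $|1+\delta t-c|$ and $|1+\delta t-z|$ this yields, after an elementary integration, the term $\bigl(\tfrac{2\log(4/3)}{s}+8\pi\bigr)(4s+1)^{-2}$ of $M(s)$. For $t\gqs 1/4$ one bounds $|\log(1+\delta t)|$ linearly in $t$ for large $t$ and by an integrable logarithmic singularity at the single value of $t$ (which is at most $1$) where the rotated ray comes closest to the origin, so that the weight $(s+t)^{-3}$ is harmless there; this contributes the constant $3$. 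Adding the two pieces gives $M(s)$.

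The routine parts are just as in Propositions~\ref{d0} and~\ref{d1}. The main obstacle is making the deformation in the second paragraph rigorous: it must pivot precisely at the branch point $1$, it must avoid the poles $z$ and $c$ (this is the only place the ``same side'' hypothesis enters, and it has to be stated carefully in the case where $[1,\infty)$ itself meets the half-plane), and it must be handled with care where the rotated ray runs along the branch cut $(-\infty,0]$ of $\log$. The remaining difficulty is the bookkeeping in the third paragraph that pins down the explicit constant $M(s)$, the delicate point there being the control of $\log(1+\delta t)$ near the closest approach of the ray to $0$.
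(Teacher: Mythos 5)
Your overall strategy matches the paper's exactly: write the Taylor remainder as a contour integral, collapse it (via the jump $2\pi i\log w$ across the cut) to an integral over a rotated ray $w=1+\alpha t$ perpendicular to $L$, and then estimate. The first two paragraphs are essentially the paper's proof in outline.

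However, the estimates in your third paragraph have a genuine gap. On $[0,1/4]$ you bound $|\log(1+\delta t)|$ by a \emph{constant} (via $|\log|1+\delta t||\lqs\log(4/3)$ and ``$|\arg|$ small''). Integrating a constant against $(t+s)^{-3}$ over $[0,1/4]$ produces a quantity of order $1/s^2$ as $s\to 0$, which is incompatible with $M(s)$, whose leading singularity is only $\frac{2\log(4/3)}{s(4s+1)^2}\sim 1/s$. The paper gets the correct order by using the inequality \eqref{sinq}, which yields a bound \emph{linear in $t$}: $|\log(\alpha t+1)|\lqs 4\log(4/3)\,t$ for $t\lqs 1/4$. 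Then $\int_0^{1/4}\frac{t\,dt}{(t+s)^3}=\frac{1}{2s(4s+1)^2}$ gives exactly the first summand of $M(s)$. Without the linear factor of $t$, the estimate is not strong enough. Relatedly, your attribution of the whole quantity $\bigl(\tfrac{2\log(4/3)}{s}+8\pi\bigr)(4s+1)^{-2}$ to the small-$t$ range is off: in the paper the $8\pi/(4s+1)^2$ comes from $\int_{1/4}^\infty \pi(t+s)^{-3}\,dt$ in the large-$t$ range, where $|\Im\log(\alpha t+1)|\lqs\pi$ is used, and the constant $3$ comes separately from $\int_{1/4}^\infty\log(t+1)\,t^{-3}\,dt$.

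One further remark: you note a possible logarithmic singularity of $\log(1+\delta t)$ at the closest approach of the ray to $0$ (which happens when $\Re\delta<0$). The paper's bound $|\Re\log(\alpha t+1)|\lqs\log(t+1)$ is in fact not uniform in this situation (it fails near $t=-\Re\alpha$ when $\alpha$ is close to $-1$), so you have spotted a real subtlety that the paper glosses over; but your own treatment of it is too vague to establish the explicit constant $3$. In short: correct structure, but the key inequality \eqref{sinq} giving the linear-in-$t$ bound is missing and the bookkeeping does not reproduce $M(s)$.
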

\begin{proof}
As in Propositions \ref{d0} and \ref{d1} we must bound the remainder term in
\begin{equation*}
    \li (z) = \li (c) + (z-c) \lip (c) + \frac{(z-c)^2}{2\pi i}\int_C \frac{\li(w)}{(w-c)^2(w-z)} \, dw
\end{equation*}
where  $C$ is the circular path of radius $T$ containing $z$ and $c$ in Figure \ref{cir}. Let the paths above and below the branch cut $[1,\infty)$ coincide - the difference between values of $\li(w)$ with $w$ coming from above and below the branch cut  is $2\pi i \log(w)$ by \eqref{wind2}. Similarly to Proposition  \ref{d1}, we rotate these horizontal paths away from $z$ and $c$ until they are perpendicular to $L$, using values of the continued dilogarithm that keep the difference at a point $w$ on the two paths as $2\pi i \log(w)$. Suppose these paths now pass through $1+\alpha$  with $|\alpha|=1$.
Letting $T \to \infty$ and $\varepsilon \to 0$ we find
\begin{equation*}
    \frac{1}{2\pi i}\int_C \frac{\li(w)}{(w-c)^2(w-z)} \, dw = \int_{0}^\infty \frac{\alpha \log(\alpha t +1)}{(\alpha t+1-c)^2(\alpha t+1-z)} \, dt.
\end{equation*}
Then $|\alpha t+1-c|$, $|\alpha t+1-z|  \gqs t+s$ imply
\begin{equation} \label{hob2}
    \left|\int_{0}^\infty \frac{\alpha \log(\alpha t +1)}{(\alpha t+1-c)^2(\alpha t+1-z)} \, dt \right| \lqs \int_{0}^\infty \frac{|\log(\alpha t +1)|}{(t+s)^3} \, dt.
\end{equation}
With the straightforward inequality
\begin{equation}\label{sinq}
    \left|\log(z+1)\right| \lqs  \frac{1}{Y}\log\left(\frac{1}{1-Y}\right) \cdot |z| \qquad (|z|\lqs Y<1)
\end{equation}
we have $|\log(\alpha t +1)| \lqs  4\log(4/3) \cdot t$ for $|t|\lqs 1/4$. Also
\begin{equation}\label{cru}
    | \log(\alpha t +1)| \lqs |\Im \log(\alpha t +1)|+ |\Re \log(\alpha t +1)| \lqs \pi + \log(t +1),
\end{equation}
so the right side of \eqref{hob2} is bounded  by
\begin{multline}\label{cru2}
\int_{0}^{1/4} \frac{ 4\log(4/3) t}{(t+s)^3} \, dt + \int_{1/4}^\infty \frac{ \pi}{(t+s)^3} \, dt
+ \int_{1/4}^\infty \frac{\log(t+1)}{(t+s)^3} \, dt \\
\lqs \frac{2\log(4/3)}{s(4s+1)^2}+\frac{8\pi}{(4s+1)^2}+\int_{1/4}^\infty \frac{\log(t+1)}{t^3} \, dt.
\end{multline}
The last integral on the right of \eqref{cru2} equals $2+\frac{15}{2}\log(5)-16\log(2)<3$.
\end{proof}

Let $A$, $B$ be integers  with $-B/2<A\lqs B/2$.
We know that $\phi_{A,B}(\rho)=0$ for some unique $\rho \in \mathcal R_{A,B}$. Let $\mathcal R'_{A,B}$ be a convex version of $\mathcal R_{A,B}$ with the boundary arc of radius $r_1(A,B)$ replaced by a straight line between the corners.
Let $c$ be any point in   $\mathcal R'_{A,B}$. Then
\begin{equation} \label{fder}
    |c \cdot \phi'_{A,B}(c)|=|-\log(1-c)+2\pi i B| \gqs \pi(2B-1)
\end{equation}
and in particular $\phi'_{A,B}(c) \neq 0$, so that  $c^*:=c-\phi_{A,B}(c)/\phi'_{A,B}(c)$ makes sense.

\begin{theorem} \label{nwt}
Let $A$, $B$ be integers  with $-B/2<A\lqs B/2$.
If $\rho$, $c \in \mathcal R'_{A,B}$ with $\phi_{A,B}(\rho)=0$, then
\begin{equation} \label{mon}
    \left| \rho - c^*\right| < B |\rho-c|^2 \times
    \begin{cases} 0.76 & \text{if} \quad A \neq 0\\
    2.51 & \text{if} \quad A = 0.
    \end{cases}
\end{equation}
\end{theorem}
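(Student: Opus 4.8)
The plan is to follow the proof of Lemma~\ref{nwt0}, replacing Proposition~\ref{d0} by the two sharper estimates of Propositions~\ref{d1} and~\ref{d2}. Since $\phi_{A,B}(\rho)=0$, the definition of $c^*$ gives
\[
  |\rho-c^*|=\frac{\bigl|\phi_{A,B}(\rho)-\phi_{A,B}(c)-(\rho-c)\phi'_{A,B}(c)\bigr|}{|\phi'_{A,B}(c)|},
\]
so there are two things to do: bound the numerator, a second Taylor remainder of $\phi_{A,B}$ at $c$, and bound $|\phi'_{A,B}(c)|$ from below. The lower bound is immediate from \eqref{fder} and $|c|\lqs r_2(A,B)$: since $c\,\phi'_{A,B}(c)=-\log(1-c)+2\pi i B$ we get $|\phi'_{A,B}(c)|\gqs\pi(2B-1)/r_2(A,B)$ (and one can do somewhat better when $A\gqs0$, where $c$ lies in the upper half-plane). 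For the numerator, writing $\phi_{A,B}=\li+4\pi^2A+2\pi i B\log$ splits it as
\[
  \bigl[\li(\rho)-\li(c)-(\rho-c)\lip(c)\bigr]+2\pi i B\bigl[\log\rho-\log c-(\rho-c)/c\bigr],
\]
and I bound the first bracket with Proposition~\ref{d2} and the second with Proposition~\ref{d1}.

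To apply those two propositions I need a line at distance $s>0$ from $1$ and a line at distance $r>0$ from $0$ such that the convex set $\mathcal R'_{A,B}$ (which contains both $\rho$ and $c$) lies on the far side; the ``same side of the branch cut'' hypotheses then come for free, because $\mathcal R'_{A,B}$ lies in the open sector $0<|\theta|<\pi$ whenever $A\neq B/2$. For the distance from $0$: the convexified inner edge of $\mathcal R'_{A,B}$ lies along the line tangent at the central angle to the circle of radius $r_1(A,B)\cos\!\bigl((\theta_2-\theta_1)/2\bigr)$, and every $(r,\theta)\in\mathcal R'_{A,B}$ satisfies $r\cos\!\bigl(\theta-\tfrac{\theta_1+\theta_2}{2}\bigr)\gqs r_1(A,B)\cos\!\bigl(\tfrac{\theta_2-\theta_1}{2}\bigr)$, so $\mathcal R'_{A,B}$ lies beyond that tangent; hence one may take $r=r_1(A,B)\cos\!\bigl((\theta_2-\theta_1)/2\bigr)$, and the bounds $r_1(A,B)\gqs\exp(-\kappa/(2\pi B))$ and $\theta_2-\theta_1\lqs\pi/(4B)$ keep this above an absolute positive constant. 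For the distance from $1$: a supporting line of the convex set $\mathcal R'_{A,B}$ at its point closest to $1$ does the job, so one may take $s=\operatorname{dist}(1,\mathcal R'_{A,B})$, and evaluating this over the explicit corners of $\mathcal R'_{A,B}$ gives $s\gqs c_1/B$ with $c_1$ an absolute constant. The constant $c_1$ is much larger when $A\neq0$ than when $A=0$: if $A\neq0$ then $|A|\gqs1$ keeps the angular coordinate of every point of $\mathcal R'_{A,B}$ at distance at least $(15\pi/8)/B$ from $0$, whereas for $A=0$ we only have $|\theta|\gqs\pi/(24B)$, so $\mathcal R'_{0,B}$ comes proportionately closer to the branch point $1$ and $M(s)$ is proportionately larger.

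With $s\gqs c_1/B$ in hand, Proposition~\ref{d2} bounds the first bracket by $M(c_1/B)\,|\rho-c|^2=O(B)\,|\rho-c|^2$, Proposition~\ref{d1} bounds the second by $2\pi B\cdot|\rho-c|^2/(2r^2)=O(B)\,|\rho-c|^2$, and dividing the sum by $|\phi'_{A,B}(c)|\gqs\pi(2B-1)/r_2(A,B)$ gives $|\rho-c^*|\lqs O(B)\,|\rho-c|^2$. Turning the $O(B)$ into the explicit $0.76$ and $2.51$ requires substituting the numerical values of $\kappa$, $\log(4/3)$, $r_1(A,B)$, $r_2(A,B)$, $c_1$, together with the sharpened lower bound on $|\phi'_{A,B}(c)|$ and --- for the $\li$ bracket, where $M$ can be wasteful --- a direct estimate of $\li''$ on the convex set $\mathcal R'_{A,B}$, all while keeping careful track of which of the two regimes ($A\neq0$ or $A=0$) we are in and, separately, of the small-$B$ cases where the rectangle is largest relative to its distance from $1$.

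The main obstacle is precisely this last, numerical, part, together with the geometry feeding it: one must make the tangent-line and nearest-point estimates for the polar rectangle $\mathcal R'_{A,B}$ precise and uniform in $B$ --- which forces a split into the cases $-B/2<A<0$, $0<A<B/2$, $A=B/2$ and $A=0$, since $r_1(A,B)$, $r_2(A,B)$ and the relevant corner of $\mathcal R'_{A,B}$ differ between them --- and then verify that the resulting inequalities are sharp enough to deliver the stated constants rather than merely $O(1)$ bounds. The borderline case $A=B/2$, in which $\mathcal R'_{A,B}$ abuts the ray $(-\infty,0]$, needs slightly separate handling; there, however, the rectangle sits near $-1$, far from the dilogarithm's branch point $1$, so the estimates are in fact easier.
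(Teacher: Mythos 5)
Your plan is essentially the paper's own proof: split the Newton remainder of $\phi_{A,B}$ into a $\li$ piece and a $\log$ piece bounded by Propositions~\ref{d2} and~\ref{d1} respectively, divide by $|\phi'_{A,B}(c)|\gqs\pi(2B-1)/r_2(A,B)$ from \eqref{fder}, and feed in geometric lower bounds for the distances $r$ and $s$ from $\mathcal R'_{A,B}$ to $0$ and $1$, with separate regimes for $A\neq0$ and $A=0$. The numerical values the paper uses for $A\neq0$ (roughly $r_2<6/5$, $r>9/10$, $s>2/B$) are in line with what your tangent-line and nearest-corner estimates would give.

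The one place where the paper does something more specific than what you sketch is the $A=0$ case. You propose to avoid the wastefulness of $M(s)$ by a direct estimate of $\li''$ on $\mathcal R'_{0,B}$; the paper instead reuses the contour argument inside Proposition~\ref{d2}, exploiting the freedom in the rotation angle $\alpha$: since every point of $\mathcal R'_{0,B}$ lies at vertical distance $\gqs\sin(\pi/(24B))>1/(8B)$ above $\R$, one may take $\alpha=-i$, and then the needed integrand bound is $|\log(-it+1)|=|\log(it+1)|\lqs t$ (proved as a short separate lemma), which collapses the bound $M(s)$ to the much cleaner $1/(2s)$. This is what lets $N(0,B)$ come out below $2.51$; a naive use of $M(s)$ with $s\asymp 1/B$ would give too large a constant. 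So your plan is sound but you would need this (or an equivalent sharpening) to actually reach $2.51$; the rest is the bookkeeping you already identify.
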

\begin{proof}
Suppose all points in $\mathcal R'_{A,B}$ are at least a distance $r$ from $0$ and $s$ from $1$. Then  Propositions \ref{d1} and \ref{d2} imply
\begin{equation} \label{ren}
    \left| \phi_{A,B}(\rho) - \phi_{A,B}(c) - (z-c) \phi'_{A,B}(c)\right| \lqs |\rho-c|^2 \left( \pi B/r^2 +M(s)\right).
\end{equation}
Combining \eqref{ren} with \eqref{fder} shows
\begin{equation}
    \left| \rho - c^*\right| \lqs |\rho-c|^2 \cdot B \cdot N(A,B) \label{cor}
\end{equation}
for
\begin{equation} \label{nab}
    N(A,B) :=  r_2(A,B) \frac{ \pi B/r^2+M(s)}{\pi B(2B-1)}
\end{equation}
since $|c| \lqs r_2(A,B)$.
Looking at the easier cases with $A \neq 0$ and $B\gqs 2$ first, it is routine to show
\begin{equation} \label{vals}
     r_2(A,B) < 6/5, \quad r>9/10, \quad s>2/B.
\end{equation}
With these numbers, for $A\neq 0$,
\begin{equation} \label{valsb}
    N(A,B) < \frac{6}{5\pi (2B-1)}\left( \frac{100\pi}{81}+\left[\log\left(\frac 43\right) +\frac{8\pi}{B}\right]\left(\frac{B}{B+8}\right)^2+\frac{3}B\right).
\end{equation}
Then $N(A,2)<0.76$, $N(A,3)<0.43$, $N(A,4)<0.30$ and if we replace $\frac{B}{B+8}$ by $1$ in \eqref{valsb} we see that $N(A,B)<0.42$ for all $B\gqs 5$. This proves the theorem for $A \neq 0$.

To treat the cases with $A=0$ we need to rework Proposition \ref{d2} a little. The set $\mathcal R'_{0,B}$ has all its points at least $\sin(\pi/(24B)>1/(8B)$ vertically above $\R$. Taking $\alpha =-i$ in Proposition \ref{d2} means we need to bound $|\log(-it+1)|=|\log(it+1)|$ in \eqref{hob2}.

\begin{lemma}
We have $|\log(it+1)| \lqs t$ for all $t\gqs 0$.
\end{lemma}
\begin{proof}
Let $\theta = \arg(it+1)$ and our desired inequality is equivalent to
\begin{equation*}
    g(\theta):=\tan^2 \theta-\theta^2-\log^2(\cos \theta) \gqs 0 \qquad (0\lqs \theta <\pi/2).
\end{equation*}
With \cite[(11.3)]{Ra}, write
\begin{equation*}
    \tan \theta = \frac 1\theta\sum_{n=2}^\infty a_n \theta^n \qquad \text{for} \qquad a_n= \begin{cases} 2^n(2^n-1)|B_n|/n!>0 & \text{if $n$ is even}\\
    0 & \text{if $n$ is odd.}
    \end{cases}
\end{equation*}
This expansion is valid for $|\theta|<\pi/2$. Therefore
\begin{equation} \label{comp}
    \tan^2 \theta =\frac 1{\theta^2}\sum_{n=4}^\infty \theta^n \sum_{j=0}^n a_j a_{n-j}.
\end{equation}
Since $\log(\cos \theta) = \sum_{n=2}^\infty \frac{a_n}{n+1} \theta^n$ we find
\begin{equation} \label{comp2}
    \log^2(\cos \theta) =\sum_{n=4}^\infty \theta^n \sum_{j=0}^n \frac{a_j a_{n-j}}{(j+1)(n-j+1)}.
\end{equation}
Comparing \eqref{comp} and \eqref{comp2} shows
\begin{equation*}
    \tan^2 \theta \gqs \frac{5 \log^2(\cos \theta)}{\theta^2}
\end{equation*}
and so
\begin{equation*}
    g(\theta) \gqs -\theta^2+ \left(\frac{5}{\theta^2}-1 \right)\log^2(\cos \theta).
\end{equation*}
Since $\log^2(\cos \theta) = \theta^4/4+\cdots$ it follows that $g(\theta)\gqs 0$ for $0\lqs \theta \lqs 1$. When $1\lqs \theta <\pi/2$ it is straightforward to verify that $g(\theta)\gqs 0$ since $g'(\theta)>0$ in this range.
\end{proof}

Now the remainder term in \eqref{hob2} is
\begin{equation} \label{bol}
    \int_{0}^\infty \frac{|\log(-i t +1)|}{(t+s)^3} \, dt \lqs \int_{0}^\infty \frac{t}{(t+s)^3} \, dt = \frac{1}{2s}.
\end{equation}
The quantities we need for $\mathcal R'_{0,B}$  satisfy
 \begin{equation} \label{vals2}
     \quad r_2(0,B) = 1, \quad r>9/10, \quad s>1/(8B).
\end{equation}
Then \eqref{cor} is true   with $M(s)$ in \eqref{nab} replaced by $1/(2s)$ from \eqref{bol}. We obtain
\begin{equation} \label{nab2}
    N(0,B)< \left( \frac{100\pi}{81}+4\right)\frac{1}{\pi(2B-1)}
\end{equation}
so that  $N(0,B) \lqs N(0,1) < 2.51$ for $B\gqs 1$. This completes the proof of Theorem \ref{nwt}.
\end{proof}


\begin{theorem} \label{new}
Let $A$ and $B$ be integers satisfying $-B/2<A\lqs B/2$.  Let $\rho$ be the unique zero of $\phi_{A,B}$. Then the sequence $c_0$, $c_1 , \cdots$ in \eqref{roov} converges  to $\rho$ with
\begin{equation} \label{rap}
    |\rho-c_n| <1.25 (0.95)^{2^n}.
\end{equation}
\end{theorem}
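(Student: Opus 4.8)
The plan is to run a standard Newton-iteration argument built on the containment $\rho\in\mathcal R_{A,B}$ of Theorem~\ref{cal} and the one-step contraction of Theorem~\ref{nwt}. Write $K_A:=0.76$ if $A\neq 0$ and $K_A:=2.51$ if $A=0$, and set $q_n:=B K_A\,|\rho-c_n|$. By Theorem~\ref{nwt}, as long as $c_n\in\mathcal R'_{A,B}$ we have $|\rho-c_{n+1}|=|\rho-c_n^{*}|<K_A B\,|\rho-c_n|^{2}$, i.e.\ $q_{n+1}<q_n^{2}$; hence if $q_0<1$ and every $c_n$ lies in $\mathcal R'_{A,B}$ then $q_n<q_0^{2^n}$, so $|\rho-c_n|=q_n/(B K_A)<q_0^{2^n}/(B K_A)$. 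Since $A\neq 0$ forces $B\gqs 2$, in all cases $B K_A\gqs 1.52>1$, so $1/(B K_A)<1.25$, and \eqref{rap} follows once we establish (a) $q_0<0.95$ and (b) that the hypotheses of Theorem~\ref{nwt} hold at every step.

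For (a) I would first check $c_0\in\mathcal R'_{A,B}$: indeed $|c_0|=1\in[r_1(A,B),r_2(A,B)]$, while $\arg c_0=2\pi A/B$ is the midpoint of $[\theta_1,\theta_2]$ when $A\neq 0,B/2$, equals $\theta_2=\pi$ when $A=B/2$, and $\pi/(12B)\in(\pi/(24B),\pi/(8B))$ when $A=0$; for $-B/2<A<0$ the point $c_0$ lies on the inner arc $r=1$, which is part of the convexified $\mathcal R'_{A,B}$. Writing $\rho=r^{*}e^{i\theta^{*}}$ with $(r^{*},\theta^{*})\in\mathcal R_{A,B}$ and $c_0=e^{i\theta_0}$, the triangle inequality gives $|\rho-c_0|\lqs |r^{*}-1|+r^{*}\,|\theta^{*}-\theta_0|$, where $|\theta^{*}-\theta_0|\lqs (\theta_2-\theta_1)/2$ is $\pi/(8B)$ for $A\neq 0$ and $\pi/(24B)$ for $A=0$, and $|r^{*}-1|\lqs\max(r_2-1,\,1-r_1)$. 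Now $1-r_1=1-e^{-\kappa/(2\pi B)}\lqs\kappa/(2\pi B)$ and, using $B\gqs 2$ whenever $A\neq 0$, $r_2-1=e^{\kappa/(\pi(2B-1))}-1\lqs\frac{\kappa}{\pi(2B-1)}e^{\kappa/(3\pi)}<0.36/B$. Combining these bounds yields $|\rho-c_0|<0.80/B$ if $A\neq 0$ and $|\rho-c_0|<0.30/B$ if $A=0$, hence $q_0<0.76\cdot 0.80<0.62$ or $q_0<2.51\cdot 0.30<0.76$ respectively; in particular $q_0<0.95$.

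The crux is (b): keeping the iterates in a region where Theorem~\ref{nwt} is valid. One cannot just assert $\overline B(\rho,|\rho-c_0|)\subseteq\mathcal R'_{A,B}$, since $c_0\in\partial\mathcal R'_{A,B}$ and Propositions~\ref{yu}, \ref{yu2} only place $\rho$ in the open interior of $\mathcal R_{A,B}$ with no quantitative margin from the boundary. I would get around this by noting that the proof of Theorem~\ref{nwt}, through Propositions~\ref{d1} and \ref{d2}, uses only that $\rho$ and $c$ lie together in two half-planes — one at distance $r$ from $0$, one at distance $s$ from $1$ — supporting the convex set $\mathcal R'_{A,B}$; so it suffices that each $c_n$ stay on the correct side of these two lines. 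Since $c_0$ sits at the angular center of $\mathcal R_{A,B}$ and each Newton step strictly decreases the distance to $\rho$ (as $q_0<1$), the explicit bound $|\rho-c_1|<q_0\,|\rho-c_0|$ from applying Theorem~\ref{nwt} once, together with a bounded enlargement of $\mathcal R'_{A,B}$ that changes $r$, $s$, and hence the constants $K_A$, only negligibly, should show that $c_1$, and then inductively all $c_n$, remain in the enlarged convex region; the borderline case $A=\pm B/6$, where the curve $r=2\cos\theta$ meets the region (precisely the case singled out in the proof of Theorem~\ref{cal}), can again be handled by first rotating by $\pi/4$ so that the relevant boundary arcs become graphs of functions. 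Granting (b), the relation $q_{n+1}<q_n^{2}$ gives $q_n<q_0^{2^n}<(0.95)^{2^n}$, whence $|\rho-c_n|=q_n/(B K_A)<(0.95)^{2^n}/(B K_A)<1.25\,(0.95)^{2^n}$, which is \eqref{rap}. I expect step (b) — the region-invariance bookkeeping — to be the only genuine difficulty; part (a) and the final assembly are routine once the dimensions of $\mathcal R_{A,B}$ from Propositions~\ref{yu} and \ref{yu2} are in hand.
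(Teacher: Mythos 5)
Your proposal follows the same overall strategy as the paper: use Theorem \ref{cal} to locate $\rho$ in $\mathcal R_{A,B}$, use Theorem \ref{nwt} for the one-step contraction, bound $|\rho-c_0|$ from the geometry of $\mathcal R_{A,B}$, and iterate. Your arithmetic for part (a) is sound: the bounds $|\rho-c_0|<0.80/B$ (for $A\neq 0$, $B\gqs 2$) and $|\rho-c_0|<0.30/B$ (for $A=0$) are actually a bit sharper than the diameter bounds the paper uses, namely $6/(5B)$ and $1/(3B)$, and either set of constants suffices to land below $0.95$ and $1.25$.

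The interesting point is your part (b). The paper disposes of region invariance with a single sentence --- from $|\rho-c^*|<|\rho-c|$ it concludes that $c^*$ remains in $\mathcal R'_{A,B}$ --- and you are right that this inference is not literally valid for a general convex region: being closer to an interior point $\rho$ does not by itself keep you inside a convex set, since $\rho$ has no quantitative separation from the boundary in Propositions \ref{yu} and \ref{yu2}. Your observation that the proof of Theorem \ref{nwt} (via Propositions \ref{d1} and \ref{d2}) really only needs $\rho$ and $c$ to lie together in the two half-planes at distance $r$ from $0$ and $s$ from $1$, plus the bound $|c|\lqs r_2$, is exactly the correct way to make the paper's assertion airtight. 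Since those half-planes (and the disk $|c|\lqs r_2$) contain a fixed-size neighborhood of $\mathcal R'_{A,B}$, the shrinking ball $B\bigl(\rho,|\rho-c_0|\bigr)$ stays inside them once $|\rho-c_0|$ is small enough, which the diameter estimates already give. Your worry about ``enlarging the region negligibly'' and the special case $A=\pm B/6$ is more than is needed: the half-plane conditions of Propositions \ref{d1}, \ref{d2} are verified directly without reference to whether $r=2\cos\theta$ crosses $\mathcal R_{A,B}$ (that case distinction in Theorem \ref{cal} concerns the intersection of the two curves, not the Newton step). So your route is the same as the paper's, slightly more careful on the one step the paper states loosely; no genuine gap remains.
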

\begin{proof} Suppose first that $A \neq 0$.
The distance between any two points in $\mathcal R'_{A,B}$ may be  shown to be less than $6/(5B)$. It then follows from Theorem \ref{nwt} that for any $c \in \mathcal R'_{A,B}$ we have $|\rho-c^*|< |\rho-c|$ so that $c^*$ remains in $\mathcal R'_{A,B}$.
We have $c_0 \in \mathcal R'_{A,B}$ and repeated applications of Theorem \ref{nwt} show that
\begin{equation*}
    0.8 B |\rho-c_n| < \bigl(0.8 B |\rho-c_{n-1}|\bigr)^2 < \bigl(0.8 B |\rho-c_{n-2}|\bigr)^4 <
    \cdots < \bigl(0.8 B |\rho-c_{0}|\bigr)^{2^n}.
\end{equation*}
The inequality \eqref{rap} now follows using  $|\rho-c_0|<6/(5B)$.

The case with $A=0$ is proved similarly. Use that the distance between any two points in $\mathcal R'_{0,B}$ is less than $1/(3B)$.
\end{proof}

\section{Further results} \label{77}

\begin{lemma} \label{66lem}
We have
\begin{equation*}
    \log(1-e^{i \theta})= -\clp(\theta)+
    \frac{i}{2} \left(\theta-\pi \frac{\theta}{|\theta|}\right)  \qquad (0<|\theta|<\pi ).
\end{equation*}
\end{lemma}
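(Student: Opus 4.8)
The plan is to compute $\log(1-e^{i\theta})$ directly by separating real and imaginary parts and matching them against $-\clp(\theta)$ and $\tfrac12\bigl(\theta-\pi\theta/|\theta|\bigr)$ respectively. The engine of the computation is the elementary factorization
\[
1-e^{i\theta}=e^{i\theta/2}\bigl(e^{-i\theta/2}-e^{i\theta/2}\bigr)=-2i\sin(\theta/2)\,e^{i\theta/2},
\]
which I would rewrite, after pulling the sign of $\sin(\theta/2)$ out of the factor $-2i\sin(\theta/2)$ and using $-i=e^{-i\pi/2}$ when $\theta>0$ (resp. $i=e^{i\pi/2}$ when $\theta<0$), in the uniform shape
\[
1-e^{i\theta}=2\bigl|\sin(\theta/2)\bigr|\,\exp\!\Bigl(i\bigl(\tfrac{\theta}{2}-\tfrac{\pi}{2}\sgn\theta\bigr)\Bigr)\qquad(0<|\theta|<\pi).
\]

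First I would take absolute values: $|1-e^{i\theta}|=2|\sin(\theta/2)|$ (equivalently $|1-e^{i\theta}|^2=2-2\cos\theta=4\sin^2(\theta/2)$), so that $\Re\log(1-e^{i\theta})=\log\bigl(2|\sin(\theta/2)|\bigr)=-\clp(\theta)$ by the definition \eqref{byo} of $\clp$. Second I would read off the argument from the displayed polar form; the only point needing verification is that the angle $\tfrac{\theta}{2}-\tfrac{\pi}{2}\sgn\theta$ already lies in the principal range $(-\pi,\pi]$, so that it is literally the principal argument and not a $2\pi$-translate of it. For $0<\theta<\pi$ this angle is $\tfrac{\theta}{2}-\tfrac{\pi}{2}\in(-\tfrac{\pi}{2},0)$, and for $-\pi<\theta<0$ it is $\tfrac{\theta}{2}+\tfrac{\pi}{2}\in(0,\tfrac{\pi}{2})$; in both cases it lies in $(-\pi,\pi]$. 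Hence $\Im\log(1-e^{i\theta})=\tfrac{\theta}{2}-\tfrac{\pi}{2}\sgn\theta=\tfrac12\bigl(\theta-\pi\theta/|\theta|\bigr)$, and adding the real and imaginary contributions yields the asserted identity.

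The computation is short, so there is no genuine obstacle; the only place demanding attention is the branch bookkeeping just described --- confirming that the factorization delivers the principal value of $\log$, which is precisely why it is cleanest to split into the cases $\theta>0$ and $\theta<0$ when passing from $-2i\sin(\theta/2)$ to an honest exponential $2|\sin(\theta/2)|\,e^{i\varphi}$ with $\varphi\in(-\pi,\pi]$.
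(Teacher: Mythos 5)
Your proof is correct and follows essentially the same route as the paper's: factor $1-e^{i\theta}=-2i\sin(\theta/2)\,e^{i\theta/2}$, take the logarithm, and verify that the resulting angle lies in the principal range. Your careful split into the cases $\theta>0$ and $\theta<0$ makes the branch bookkeeping fully explicit, which the paper leaves implicit with the phrase ``taking logs with appropriate branches.''
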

\begin{proof}
We may write $1-e^{i \theta}=-e^{i \theta/2} 2 i \sin (\theta/2) = e^{i (\theta-\pi)/2} 2 \sin (\theta/2)$. Taking logs with appropriate branches and using $\clp(\theta)=-\log | 2 \sin(\theta/2)|$ from \eqref{byo} completes the proof.
\end{proof}

\begin{lemma} \label{lm}
For $0<|\theta| \lqs \pi$
\begin{equation*}
    \left| \clp(\theta)\right| \lqs \log \left(\max \left\{ 2, \ \frac{\pi}{3|\theta|}\right\}\right).
\end{equation*}
\end{lemma}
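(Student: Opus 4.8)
The plan is to use the closed form $\clp(\theta)=-\log|2\sin(\theta/2)|$ from \eqref{byo} and reduce the claim to an elementary two-sided estimate on $2\sin(\theta/2)$. Since $\cl$ is odd, $\clp$ is even, so I may assume $0<\theta\lqs\pi$. On this range $2\sin(\theta/2)>0$, hence $\clp(\theta)=-\log\bigl(2\sin(\theta/2)\bigr)$, and with $M:=\max\{2,\ \pi/(3\theta)\}$ the asserted inequality $|\clp(\theta)|\lqs\log M$ is equivalent to
\begin{equation*}
    \frac{1}{M}\ \lqs\ 2\sin(\theta/2)\ \lqs\ M .
\end{equation*}
The right-hand inequality is immediate, since $2\sin(\theta/2)\lqs 2\lqs M$. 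For the left-hand one I would write $1/M=\min\{1/2,\ 3\theta/\pi\}$ and split at $\theta=\pi/6$, the point where the two candidates in this minimum cross.

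For $\pi/6\lqs\theta\lqs\pi$ we have $1/M=1/2$, and since $\theta/2\in[\pi/12,\pi/2]$ where $\sin$ is increasing, $2\sin(\theta/2)\gqs 2\sin(\pi/12)=(\sqrt6-\sqrt2)/2>1/2$; the only numerical input is $(\sqrt6-\sqrt2)^2=8-4\sqrt3>1$, i.e. $\sin(\pi/12)>1/4$. For $0<\theta<\pi/6$ we have $1/M=3\theta/\pi$, so I need $\sin(\theta/2)\gqs 3\theta/(2\pi)$; setting $x=\theta/2\in(0,\pi/12)$ this reads $\sin x\gqs 3x/\pi$. Because $\sin$ is concave on $[0,\pi/12]$, its graph lies above the chord through $(0,0)$ and $(\pi/12,\sin(\pi/12))$, whose slope $12\sin(\pi/12)/\pi$ exceeds $3/\pi$ exactly because $\sin(\pi/12)>1/4$. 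Thus the same single numerical fact closes both cases, and the lemma follows.

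The only point needing care — and what I would call the main (minor) obstacle — is that the breakpoint of the max, at $\theta=\pi/6$, is not the same as the sign change of $\clp$, at $\theta=\pi/3$; rewriting $1/M$ as $\min\{1/2,\ 3\theta/\pi\}$ sidesteps this by reducing everything to a single split at $\pi/6$. Apart from that, the proof is just monotonicity and concavity of $\sin$ together with the estimate $\sin(\pi/12)>1/4$.
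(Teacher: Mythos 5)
Your proof is correct and rests on the same ingredients as the paper's: the identity $\clp(\theta)=-\log|2\sin(\theta/2)|$, a chord/concavity bound of Jordan type for $\sin$, and a two-case split. The only real difference is the splitting point: the paper splits at $|\theta|=\pi/3$ (where $2\sin(\theta/2)$ crosses $1$, i.e.\ where $\clp$ changes sign), using $1\lqs 2\sin(\theta/2)\lqs 2$ on $[\pi/3,\pi]$ and $\sin x\gqs 3x/\pi$ for $|x|\lqs\pi/6$ on the rest, whereas you split at the breakpoint $|\theta|=\pi/6$ of the max and close both cases with the single estimate $\sin(\pi/12)>1/4$. Both work; the paper's split is marginally simpler since it never needs to track which term attains the max, while yours has the small aesthetic advantage of reducing everything to one numerical fact.
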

\begin{proof}
When $\pi/3 \lqs |\theta| \lqs \pi$ we have $1 \lqs |2\sin(\theta/2)| \lqs 2$ so that $|\log | 2\sin(\theta/2)|| \lqs \log 2$.
When $0 < |\theta| < \pi/3$ we have $|2\sin(\theta/2)|<1$ and using the  inequality
$|\sin x| \gqs 3 |x|/\pi$ for $|x| \lqs \pi/6$   implies
\begin{equation*}
    \bigl|\log | 2\sin(\theta/2)|\bigr|= \log\left(\frac 1{|2\sin(\theta/2)|} \right) \lqs \log \frac{\pi}{3|\theta|}. \qedhere
\end{equation*}
\end{proof}

\begin{theorem} \label{new3}
Let $A$ and $B$ be integers satisfying $-B/2<A\lqs B/2$ and suppose $\phi_{A,B}(\rho)=0$. Then
\begin{equation} \label{rag}
    \rho=e^{2\pi i A/B}\left( 1+\frac{-\cl(2\pi A/B)+i \pi^2 B_2(|A|/B)}{2\pi B}\right) +O\left(\frac{1+\log B}{B^2}\right).
\end{equation}
\end{theorem}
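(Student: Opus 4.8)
The plan is to take one step of Newton's method from the starting point $c_0$ of \eqref{roov}, identify the iterate $c_1$ with the main term of \eqref{rag}, and separately establish $|\rho-c_1|=O(1/B^2)$ with an absolute constant; together these give $\rho=c_1+O(1/B^2)$, which is \eqref{rag} (and, in passing, \eqref{fdr}). The case $A=0$ is split off throughout since then $c_0\neq e^{2\pi iA/B}$.

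First I compute $\phi_{A,B}(c_0)$. For $A\neq 0$ we have $c_0=e^{2\pi iA/B}$ with $-\pi<2\pi A/B\lqs\pi$, so $4\pi^2A+2\pi iB\log(c_0)=0$ and $\phi_{A,B}(c_0)=\li\bigl(e^{2\pi iA/B}\bigr)$, whose real and imaginary parts are $\pi^2B_2(|A|/B)$ and $\cl(2\pi A/B)$ by \eqref{imc2}, \eqref{imc} (using $B_2(x)=B_2(1-x)$ when $A<0$); this has modulus $O(1)$. For $A=0$ we have $c_0=e^{\pi i/(12B)}$ with $2\pi iB\log(c_0)=-\pi^2/6$, so $\phi_{0,B}(c_0)=\li\bigl(e^{\pi i/(12B)}\bigr)-\pi^2/6=-\pi^2/(24B)+i\cl(\pi/(12B))+O(1/B^2)$, of modulus $O((1+\log B)/B)$ by \eqref{dtb}, while $c_0=1+i\pi/(12B)+O(1/B^2)$ already carries the main term. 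Next, \eqref{dphi} and Lemma \ref{66lem} give
\[
 c_0\,\phi'_{A,B}(c_0)=-\log(1-c_0)+2\pi iB=\clp(\theta)+i\bigl(2\pi B+\delta\bigr),\qquad |\delta|\lqs\pi,
\]
with $\theta=2\pi A/B$ (or $\pi/(12B)$ when $A=0$); the value at $\theta=\pi$, where $c_0=-1$ lies on the cut, is the boundary value from the upper half plane. By Lemma \ref{lm}, $|\clp(\theta)|\lqs\log\max\{2,\pi/(3|\theta|)\}=O(1+\log B)$ since $|\theta|\gqs 2\pi/B$ (or $=\pi/(12B)$), so writing $\clp(\theta)+i(2\pi B+\delta)=2\pi iB(1+u)$ with $|u|=O((1+\log B)/B)<\tfrac12$ for $B$ large we get $1/\phi'_{A,B}(c_0)=e^{i\theta}\bigl(1/(2\pi iB)+O((1+\log B)/B^2)\bigr)$. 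Multiplying by $-\phi_{A,B}(c_0)$ and using the modulus bounds above,
\[
 c_1-c_0=-\frac{\phi_{A,B}(c_0)}{\phi'_{A,B}(c_0)}=-\frac{e^{i\theta}\phi_{A,B}(c_0)}{2\pi iB}+O\!\left(\frac{1+\log B}{B^2}\right);
\]
for $A\neq 0$ the identity $-(\pi^2B_2+i\cl)/(2\pi iB)=(-\cl+i\pi^2B_2)/(2\pi B)$ converts this into $c_1=e^{2\pi iA/B}\bigl(1+(-\cl(2\pi A/B)+i\pi^2B_2(|A|/B))/(2\pi B)\bigr)+O((1+\log B)/B^2)$, and for $A=0$ the whole correction $c_1-c_0$ is absorbed into the error term, the main term coming from $c_0$.

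For $|\rho-c_1|=O(1/B^2)$ the key is to use the sharper inequality \eqref{cor} from the proof of Theorem \ref{nwt}, namely $|\rho-c^*|\lqs|\rho-c|^2\cdot B\,N(A,B)$ with $N(A,B)$ as in \eqref{nab}, \eqref{nab2}, rather than the cruder bound in the statement of Theorem \ref{nwt}. Since $\pi B/r^2+M(s)=O(B)$, we have $N(A,B)=O(1/B)$ with an absolute constant (cf.\ \eqref{valsb} and \eqref{nab2}); and $c_0,\rho\in\mathcal R'_{A,B}$, whose diameter is $O(1/B)$, so taking $c=c_0$ (hence $c^*=c_1$) gives $|\rho-c_1|=O(1/B^2)$. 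Combined with the previous paragraph, this proves \eqref{rag}.

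The step I expect to be the main obstacle is the bookkeeping around $\clp(\theta)$: unlike $B_2$ and $\cl$, it is genuinely of size $O(1+\log B)$, and one has to verify that dividing by $\phi'_{A,B}(c_0)$ (which supplies the factor $1/(2\pi iB)$) pushes it down to order $(1+\log B)/B^2$, and likewise that it enters $M(s)$, hence $N(A,B)$, only through the harmless lower bound $s\gg 1/B$. The secondary care points are the parallel (but easier) treatment of $A=0$, where $c_0\neq e^{2\pi iA/B}$ and $\phi_{0,B}(c_0)$ is already $O((1+\log B)/B)$, and the edge case $A=B/2$, where $c_0=-1$ sits on the branch cut of $\phi_{A,B}$ so boundary values from the upper half plane are used; small $B$ is covered by the absoluteness of the implied constants.
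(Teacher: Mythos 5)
Your proposal is correct and follows essentially the same route as the paper: one Newton step from $c_0$, with $|\rho-c_1|=O(B^{-2})$ coming from the sharper bound \eqref{cor} together with $N(A,B)=O(1/B)$ and the diameter of $\mathcal R'_{A,B}$ being $O(1/B)$, and the main term extracted by expanding $\phi_{A,B}(c_0)$ and $1/\phi'_{A,B}(c_0)$ via \eqref{imc2}, \eqref{imc}, Lemma \ref{66lem} and Lemma \ref{lm}, with the $A=0$ case split off exactly as the paper does. The one small discrepancy is a sign typo in the paper's intermediate display $\phi_{A,B}(c_0)=-\li(e^{2\pi iA/B})$; your computation $\phi_{A,B}(c_0)=\li(e^{2\pi iA/B})=\pi^2B_2(|A|/B)+i\cl(2\pi A/B)$ is correct and is in fact the value the paper uses in equation \eqref{u2} and after.
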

\begin{proof}
Let $c_0$ and $c_1$ be as in Theorem \ref{new}. With \eqref{valsb} and \eqref{nab2} it is easy to see that $B \cdot N(A,B)$ is bounded by an absolute constant. Also $|\rho-c_0|<6/(5B)$, as we have seen in the proof of Theorem \ref{new}, so that \eqref{cor} implies
\begin{equation} \label{w36}
    |\rho - c_1| =O\bigl(B^{-2}\bigr).
\end{equation}
Now for $A \neq 0$,
\begin{equation*}
    \phi_{A,B}(c_0)  = -\li(e^{2\pi i A/B}) =  \pi^2 B_2(|A|/B) + i\cl(2\pi A/B)
\end{equation*}
with \eqref{imc2} and \eqref{imc}. Also
\begin{equation*}
    c_0 \phi'_{A,B}(c_0)  = 2 \pi i B -\log(1-e^{2\pi i A/B}) = 2\pi iB+ \clp(2\pi A/B) + 2\pi i\left( -\frac{A}{2B}+\frac A{4 |A|}\right)
\end{equation*}
using Lemma \ref{66lem}.
Therefore
\begin{equation} \label{u2}
     -\frac{\phi_{A,B}(c_0)}{c_0 \phi'_{A,B}(c_0)} = \frac{-\cl(2\pi A/B)+i \pi^2 B_2(|A|/B)}{2\pi B}\frac 1{1+X}
\end{equation}
for
\begin{equation*}
    X:=\frac 1B \left(  -\frac{A}{2B}+\frac A{4 |A|}\right)+ \frac{\clp(2\pi A/B)}{2\pi i B}.
\end{equation*}
We have
\begin{equation} \label{clp}
    \left| \clp(2\pi A/B)\right| \lqs \log(2B)
\end{equation}
in an easy corollary to Lemma \ref{lm}.
Use \eqref{clp} to see that
 \begin{equation} \label{u22}
    |X| \lqs \frac{\pi/2+\log(2B)}{2\pi B} \lqs \frac 12 \quad \implies \quad
    \left| \frac{1}{1+X} -1 \right| \lqs2|X| \lqs \frac {\pi +2\log(2B)}{2\pi B}
 \end{equation}
 and hence \eqref{u2} and \eqref{u22} together imply
\begin{equation}\label{xbx}
    1-\frac{\phi_{A,B}(c_0)}{c_0 \phi'_{A,B}(c_0)} = 1+\frac{-\cl(2\pi A/B)+i \pi^2 B_2(|A|/B)}{2\pi B} + O\left(\frac{1+\log B}{B^2}\right).
\end{equation}
Multiplying both sides of \eqref{xbx} by $c_0$ and using \eqref{w36} completes the proof of \eqref{rag}.

In the case $A=0$, our goal \eqref{rag} becomes $\rho=1+\pi i/(12B)+O\bigl((1+\log B)/B^2\bigr)$.
Similarly to \eqref{xbx} we find
\begin{equation*}
    1-\frac{\phi_{0,B}(c_0)}{c_0 \phi'_{0,B}(c_0)} = 1 + O\left(\frac{1+\log B}{B^2}\right)
\end{equation*}
by using \eqref{dtb}. Multiplying by $c_0$ then shows
\begin{equation} \label{xbx2}
    c_1=e^{\pi i/(12B)}+ O\left(\frac{1+\log B}{B^2}\right).
\end{equation}
Finally, \eqref{xbx2} and \eqref{w36} prove \eqref{rag}.
\end{proof}

Theorems \ref{new} and \ref{new3} above establish Theorem \ref{new2}.
The following result completes the last part of the proof of Theorem \ref{ebt}.

\begin{prop} \label{simple}
For $A$ and $B\in \Z$, all zeros of $\phi_{A,B}(z)$ are simple.
\end{prop}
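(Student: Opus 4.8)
A zero $\rho$ of $\phi_{A,B}$ is simple exactly when $\phi'_{A,B}(\rho)\neq 0$, so the plan is to verify that the derivative cannot vanish at any of the zeros already located in Sections~\ref{33} and \ref{55}. The one tool needed is the identity \eqref{dphi}, namely $z\,\phi'_{A,B}(z)=-\log(1-z)+2\pi i B$, which reduces everything to elementary estimates on the logarithm.

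First I would treat the case $B\neq 0$. From $\phi_{A,B}(z)=\overline{\phi_{A,-B}(\bar z)}$ (a consequence of $\overline{\li(\bar z)}=\li(z)$ and $\overline{\log(\bar z)}=\log(z)$ off the cut) one gets $\phi'_{A,B}(z)=\overline{\phi'_{A,-B}(\bar z)}$, so it suffices to handle $B\gqs 1$. For such $B$ the unique zero $\rho$ lies in the polar rectangle $\mathcal R_{A,B}\subseteq \mathcal R'_{A,B}$ by Theorem~\ref{cal}; in particular $\rho\neq 0$, since every point of $\mathcal R_{A,B}$ has modulus at least $\min\{1,\exp(-\kappa/(2\pi B))\}>0$. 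Now apply \eqref{fder}, which says $|c\cdot\phi'_{A,B}(c)|\gqs \pi(2B-1)$ for every $c\in\mathcal R'_{A,B}$, at $c=\rho$: combined with $\rho\neq 0$ this forces $\phi'_{A,B}(\rho)\neq 0$, so $\rho$ is simple.

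It remains to handle $B=0$, where $\phi_{A,0}(z)=\li(z)+4\pi^2A$. By Lemma~\ref{im0} and the discussion of Section~\ref{33}, any zero is real and lies in $(-\infty,1)$: indeed $x=1$ is impossible because $\phi_{A,0}(1)=\pi^2(1/6+4A)\neq 0$, while by \eqref{ebo} the zero is at $x=0$ when $A=0$ and in $(-\infty,-1)$ when $A\gqs 1$. For $x\in(-\infty,1)$ formula \eqref{ddx} gives $\phi'_{A,0}(x)=-\log(1-x)/x$, which is strictly positive for $x<0$ and for $0<x<1$, and which extends continuously to the value $1$ at $x=0$ (from $-\log(1-z)=z+z^2/2+\cdots$). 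Hence $\phi'_{A,0}$ is nonzero throughout $(-\infty,1)$, and in particular at the zero of $\phi_{A,0}$, which is therefore simple. Together with the previous paragraph this covers every pair $(A,B)$ for which $\phi_{A,B}$ has a zero, completing the proof of Theorem~\ref{ebt}.

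There is no real obstacle here; the statement is essentially a corollary of the localization of the zeros already established, read off from the explicit expression \eqref{dphi} for $\phi'_{A,B}$. The only mildly delicate point is the degenerate case $B=0$, $A=0$, $\rho=0$, where \eqref{dphi} reads $z\,\phi'_{0,0}(z)=-\log(1-z)$ and one must pass to the limit $z\to 0$ to read off $\phi'_{0,0}(0)=1\neq 0$ rather than evaluate the quotient directly.
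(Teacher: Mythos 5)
Your proof is correct and rests on the same observation as the paper's: by \eqref{dphi}, $\rho\,\phi'_{A,B}(\rho)=-\log(1-\rho)+2\pi iB$, and one checks this is nonzero at every located zero. The paper's version is marginally more direct in two spots — for $B\neq 0$ it notes immediately that $\Im\log(1-\rho)\in(-\pi,\pi]$ cannot equal $2\pi B$, without invoking Theorem~\ref{cal} or \eqref{fder}; and for $B=0$, $A\gqs 1$ it cites Proposition~\ref{rhoa} rather than the (simpler, and arguably cleaner) positivity of $\phi'_{A,0}$ on $(-\infty,1)$ that you use — but these are cosmetic differences, not a different route.
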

\begin{proof}
The zero of $\li(z)=\phi_{0,0}(z)$ at $z=0$ is clearly simple by \eqref{def0}. For $B=0$ and $A \gqs 1$,  we have by \eqref{dphi} and Proposition \ref{rhoa} that
\begin{equation*}
    \rho \cdot \phi'_{A,0}(\rho) = -\log(1-\rho) <  -\pi\sqrt{8A-1/3} <0,
\end{equation*}
which shows that $\rho$ is simple. For $-B/2<A\lqs B/2$, if $\phi'_{A,B}(\rho) = 0$ then $2\pi i B = \log(1-\rho)$ by \eqref{dphi}. But this is impossible since $-\pi<\Im \log(1-\rho) \lqs \pi$.
\end{proof}

Another way to verify the simplicity of the zeros of $\phi_{A,B}(z)$, as well as their existence and uniqueness, is with the argument principle. Let $C$ be the contour shown in Figure \ref{cir}, but also avoiding the branch cut $(-\infty,0]$ if $B \neq 0$. Then
for $A$, $B \in \Z$
\begin{equation*}
    \Psi_C(A,B):= \frac{1}{2\pi i}\int_C \frac{\phi'_{A,B}(z)}{\phi_{A,B}(z)} \, dz
\end{equation*}
counts the number of zeros of $\phi_{A,B}(z)$ in the interior of $C$ with multiplicity. Letting the horizontal paths coincide, the large radius of $C$ go to infinity and the small radii go to zero,  we define $\Psi(A,B)$ which counts all the zeros of  $\phi_{A,B}(z)$. With \eqref{wind} and \eqref{wind2} we obtain
\begin{equation}\label{coni}
    2\pi i \Psi(A,B)= \int_1^\infty \left(
    \frac{\phi'_{A,B+1}(z)}{\phi_{A,B+1}(z)} - \frac{\phi'_{A,B}(z)}{\phi_{A,B}(z)}
    \right)\, dz
    +  \int_{-\infty}^0 \left(
    \frac{\phi'_{A,B}(z)}{\phi_{A,B}(z)} - \frac{\phi'_{A+B,B}(z)}{\phi_{A+B,B}(z)}
    \right)\, dz.
\end{equation}
Note that $\Psi(A,B)$ correctly counts the zeros of $\phi_{A,B}(z)$ when $B=0$; in that case there is no branch cut at $(-\infty,0]$ to avoid and the second integral in \eqref{coni} is zero.
Theorem \ref{ebt} implies
\begin{equation}\label{resu}
    \Psi(A,B) =   \begin{cases}
    1 & \text{\ if \ }B=0, \ A \gqs 0\\
    1 & \text{\ if \ }-|B|/2<A\lqs |B|/2\\
    0 & \text{\ otherwise}
    \end{cases}
\end{equation}
with the right side of \eqref{resu} indicating a single simple zero with $1$ and no zero with $0$.
Checking formula \eqref{resu}  numerically, we confirm that it is true for $|A|$, $|B| \lqs 50$. Perhaps \eqref{resu} can be proved directly from \eqref{coni}.

\section{Zeros of polylogarithms} \label{polyl}
The dilogarithm $\li(z)$ is a special case of the polylogarithm, also known as Jonqui\`ere's function \cite{Jon},
\begin{equation*}
    \pl_s(z):=\sum_{n=1}^\infty \frac{z^n}{n^s} \quad \text{ for } \quad |z|< 1, \ s\in \C.
\end{equation*}
As a function of $z$, it has an analytic continuation to all of $\C$ and in general will be multi-valued with branch points at $0$, $1$ and $\infty$, see for example Sections 4, 11 of \cite{Ve}. It satisfies
\begin{equation} \label{derli}
    z \frac{d}{dz}\pl_s(z) = \pl_{s-1}(z).
\end{equation}

\subsection{Zeros of $\pl_{s}(z)$ for   $\Re(s) > 0$} \label{lrr}
Here we discuss what is known about the zeros of $\pl_s(z)$ for $s \in \C$ with $\Re(s) > 0$.

\begin{theorem}[Le Roy, 1900 \cite{Ler}]
For $r > 0$, $\pl_r(z)$ has exactly one  zero for $z \in \C - [1,\infty)$. It is at $z=0$ and is simple.
\end{theorem}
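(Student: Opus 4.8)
The plan is to base the whole argument on the classical (Bose--Einstein) integral representation of the polylogarithm: for $r>0$ and $z\in\C-[1,\infty)$,
\[
 \pl_r(z)=\frac{1}{\Gamma(r)}\int_0^\infty \frac{t^{r-1}}{e^t/z-1}\,dt=\frac{z}{\Gamma(r)}\int_0^\infty \frac{t^{r-1}}{e^t-z}\,dt .
\]
First I would establish this: for $|z|<1$ expand $z/(e^t-z)=\sum_{n\gqs 1}z^n e^{-nt}$ and integrate term by term using $\int_0^\infty t^{r-1}e^{-nt}\,dt=\Gamma(r)/n^r$; then observe that the integral on the right defines a holomorphic function on all of $\C-[1,\infty)$, since it converges absolutely and locally uniformly there — the factor $t^{r-1}$ with $r>0$ controls the integrand as $t\to 0$, the factor $e^t$ controls it as $t\to\infty$, and the denominator $e^t-z$ never vanishes for $t\gqs 0$ when $z\notin[1,\infty)$. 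Hence the identity extends from $|z|<1$ to the whole domain by analytic continuation.

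The key point is that $r>0$ makes $t^{r-1}$ and $\Gamma(r)$ positive, so the integrand becomes sign-definite after taking imaginary parts. For $z\notin\R$, since $e^t$ is real one has $\Im\bigl(1/(e^t-z)\bigr)=\Im z/|e^t-z|^2$, and therefore
\[
 \Im\!\left(\frac{\pl_r(z)}{z}\right)=\frac{1}{\Gamma(r)}\int_0^\infty t^{r-1}\,\Im\!\left(\frac{1}{e^t-z}\right)dt=\frac{\Im z}{\Gamma(r)}\int_0^\infty \frac{t^{r-1}}{|e^t-z|^2}\,dt ,
\]
which has the same, nonzero, sign as $\Im z$. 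So $\pl_r(z)\neq 0$ whenever $\Im z\neq 0$. On the real slice of the domain, $z=x\in(-\infty,1)$, we have $e^t-x>0$ for every $t\gqs 0$, so $\pl_r(x)/x=\frac{1}{\Gamma(r)}\int_0^\infty t^{r-1}/(e^t-x)\,dt>0$; thus $\pl_r(x)$ is negative for $x<0$, vanishes at $x=0$, and is positive for $0<x<1$. Combining the two cases, $z=0$ is the only zero of $\pl_r$ in $\C-[1,\infty)$. Simplicity follows at once from $\pl_r(z)=z+z^2/2^r+\cdots$, which gives $\pl_r'(0)=1\neq 0$.

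The argument is short, and the only real obstacle is bookkeeping: one must justify the integral representation with enough care (absolute, locally uniform convergence near $t=0$ and $t=\infty$) to know it is genuinely holomorphic and matches the power series, and to move $\Im$ and the positive constant $1/\Gamma(r)$ inside the integral. A variant closer to the method of Lemma~\ref{im0} would track $\tfrac{d}{dt}\,\Im\,\pl_r(te^{i\theta})=\tfrac1t\,\Im\,\pl_{r-1}(te^{i\theta})$ via \eqref{derli}, but for non-integer $r$ this recursion never reaches $\pl_1(z)=-\log(1-z)$, whereas the integral representation disposes of all $r>0$ uniformly.
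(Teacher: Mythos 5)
Your proof is correct and follows essentially the same route as the paper: both rest on Jonquière's integral representation $\pl_r(z)=\frac{z}{\Gamma(r)}\int_0^\infty \frac{t^{r-1}}{e^t-z}\,dt$ and the observation that $\Im\bigl(\pl_r(z)/z\bigr)$ has the same sign as $\Im z$, while for real $z\in(-\infty,1)$ the integrand is positive. You supply a few details the paper leaves implicit (the justification of the integral representation by series expansion plus analytic continuation, the treatment of $0<x<1$ as well as $x<0$, and the simplicity of the zero at $0$ via $\pl_r'(0)=1$), but the core argument is the same.
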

\begin{proof}
We use Jonqui\`ere's representation \cite{Jon}
\begin{equation}\label{rere}
    \pl_{s}(z)=\frac{z}{\G(s)}\int_0^\infty \frac{t^{s-1}}{e^t-z}\, dt \qquad (\Re(s)>0, \ z \in \C-[1,\infty)).
\end{equation}
Then for $s=r>0$ and $z=x+i y$, the imaginary part of $\pl_r(z)/z$ is
\begin{equation*}
    \frac{y}{\G(r)}\int_0^\infty \frac{t^{r-1}}{(e^t-x)^2+y^2} \, dt
\end{equation*}
and therefore non-zero for $y \neq 0$. For $y=0$ we have $z=x<0$ and so, clearly, $\pl_r(x)/x>0$. We have shown that $\pl_r(z)/z$ is finite and non-zero for $z \in \C - [1,\infty)$ as required.
\end{proof}

For $s=2$ we have a clear picture of the zeros of $\li(z)$ with Theorems \ref{ebt} -- \ref{new2}. When  $s=1$ we have the simple case $\pl_{1}(z)=-\log(1-z)$.
Clearly $\pl_{1}(0)=0$  gives the only zero on the principal branch and $\pl_{1}(z)$ is non-zero on every other branch.
Going in the other direction,  if we let $\pl_{3}(z)$ denote the trilogarithm on its principal branch, it may be shown (see \cite[p. 246]{Ve}) that on any branch it has the form
\begin{equation}\label{tril}
\pl_{3}(z) +  4\pi^3 i A+  2\pi^2  B  \log(z)- \pi i  C \log^2(z) \qquad (A,B,C \in \Z).
\end{equation}
By studying when the real and imaginary parts of \eqref{tril} vanish, as in Section \ref{55},  it should be possible to approximately determine the zeros. Looking at some cases, it seems there may be up to two zeros on each branch and these zeros are close to the unit circle.

For general $s$ with $\Re(s)>0$, the only result on the zeros of $\pl_{s}(z)$ seems to be that they are finite in number \cite{G79} for $z$ on the principal branch. Vep\v{s}tas  gives an efficient method to compute polylogarithms in   \cite{Ve} and displays the zeros of $\pl_{1/2+80i}(z)$, $\pl_{1/2+15i}(z)$ and   $\pl_{6/5+14i}(z)$ in the phase plots \cite[Figs 8-10]{Ve}. In these cases the zeros lie near the unit circle. If $s_n$ is a zero of the Riemann zeta function $\zeta(s)$ then $\pl_{s_n}(z)$ has zeros at $z=\pm 1$. As $s$ moves continuously near $s_n$ the corresponding $z$ zeros of $\pl_{s}(z)$ show interesting behavior moving near $\pm 1$. This connection is explored in \cite{FK}.

For another example, Figure \ref{xfig} shows the zeros of $\pl_{s}(z)$ for $s=10+44i$. The zeros were found numerically by combining a phase plot with Newton's method. The spiraling curve that these zeros are making seems to be of a similar form to \eqref{plzo}. It would be interesting to identify it exactly.


\SpecialCoor
\psset{griddots=5,subgriddiv=0,gridlabels=0pt}
\psset{xunit=0.3cm, yunit=0.2cm}
\psset{linewidth=1pt}
\psset{dotsize=3pt 0,dotstyle=*}

\begin{figure}[h]
\begin{center}
\begin{pspicture}(-10,-15)(20,6.5) 

\savedata{\mydata}[
{{0., 0.}, {21.1251, -6.7895}, {4.7183, -15.2882}, {-5.4338,
-10.5751}, {-8.2936, -3.9275}, {-7.2441, 0.7695}, {-4.9906,
  3.1884}, {-2.8248, 4.0277}, {-1.1539, 4.0013}, {-0.0469,
  3.5765}, {0.6806, 3.0675}, {1.1849, 2.5101}, {1.4337,
  2.0347}, {1.5398, 1.6725}, {1.6061, 1.3056}, {1.6079,
  1.028}, {1.5719, 0.834}, {1.531, 0.6668}, {1.4844, 0.534}, {1.4367,
  0.4284}, {1.3905, 0.3442}, {1.3471, 0.2769}, {1.3073,
  0.223}, {1.2713, 0.1796}, {1.239, 0.1447}, {1.2101, 0.1165}, {1.186,
   0.088}, {1.162, 0.074}, {1.142, 0.059}, {1.123, 0.043}, {1.107,
  0.033}, {1.086, 0.026}}
]

\psline[linecolor=gray]{->}(-11,0)(22,0)
\psline[linecolor=gray]{->}(0,-15)(0,6)
\multirput(-10,-0.5)(2,0){16}{\psline[linecolor=gray](0,0)(0,1)}
\multirput(-0.3,-14)(0,2){10}{\psline[linecolor=gray](0,0)(0.6,0)}

\dataplot[linecolor=blue,linewidth=0.8pt,plotstyle=dots]{\mydata}

\psdots[linecolor=blue,dotsize=5pt](0,0)

\rput(-10,-1.5){$-10$}
\rput(10,-1.5){$10$}
\rput(20,-1.4){$20$}
\rput(-1.7,-10){$-10$}

\end{pspicture}
\caption{The zeros of $\pl_{s}(z)$ for $s=10+44i$}\label{xfig}
\end{center}
\end{figure}
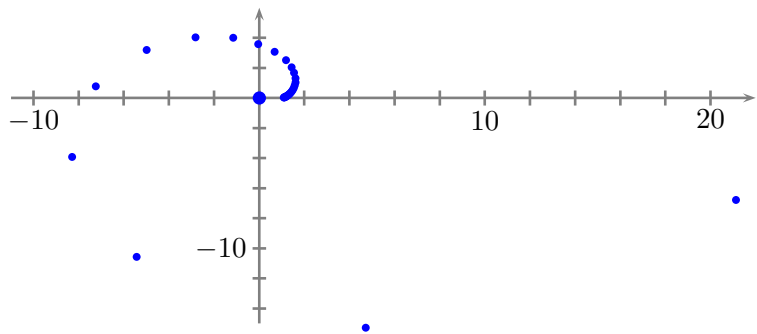

\subsection{Zeros of $\pl_{s}(z)$ for   $\Re(s) \lqs 0$}
Much more is known about the zeros of $\pl_{s}(z)$ for   $\Re(s) \lqs 0$. We look at the case when $s$ is an integer first.
Applying \eqref{derli} to $\pl_{1}(z)$ shows $\pl_{0}(z)=z/(1-z)$, a rational function.
For $-m \in \Z_{\lqs 0}$ write
\begin{equation} \label{fro}
    \pl_{-m}(z) = \frac{z \cdot A_m(z)}{(1-z)^{m+1}}.
\end{equation}
Then \eqref{derli} implies
\begin{equation}\label{redl}
    A_{m+1}(z)=(1-z)^{m+2} \frac{d}{dz}\left( \frac{z \cdot A_m(z)}{(1-z)^{m+1}} \right).
\end{equation}
We may therefore
 recursively define the functions $A_m$ by
\begin{equation} \label{redl2}
    A_0(z):=1, \qquad A_{m+1}(z):=\bigl(1+ m z\bigr) A_{m}(z)+z(1-z)A'_{m}(z) \qquad (m \in \Z_{\gqs 0}).
\end{equation}
Hence  $A_m(z)$ is a polynomial and for $m \in \Z_{\gqs 1}$ it has degree $m-1$. These are the {\em Eulerian polynomials},
 introduced by Euler in connection with evaluating the Riemann zeta function at negative integers.
For example
\begin{equation*}
    A_1(z)=1, \quad A_2(z)=1+z,  \quad A_3(z)=1+4z+z^2, \quad A_4(z)=1+11z+11z^2+z^3.
\end{equation*}
The coefficient of $z^k$ in $A_m(z)$ has a combinatorial interpretation as the number of permutations of $\{1,2,\dots,m\}$ with $k$ ascents, see \cite[Sect. 6.2]{Knu}. Frobenius also showed in \cite{Fro} that
\begin{equation*}
    A_m(z) = \sum_{k=0}^m k! \stirb{m}{k} (z-1)^{m-k} \qquad \quad (m \in \Z_{\gqs 0})
\end{equation*}
with the Stirling number $\stirb{m}{k}$   indicating the number of partitions of  $m$ elements into $k$ non-empty subsets.

We have
\begin{align}\label{recip}
    z^{m-1}A_m(1/z) & = A_m(z) \qquad \quad (m \in \Z_{\gqs 1})
\intertext{which   is equivalent to}
    (-1)^{m+1}\pl_{-m}(1/z) & = \pl_{-m}(z) \qquad \quad (m \in \Z_{\gqs 1})  \label{rec9}
\end{align}
and \eqref{rec9} may be easily established  using \eqref{derli} and induction.

Thus by \eqref{fro},  $\pl_{-m}(z)$ is a rational function and its zeros are at $z=0$ along with the zeros of the Eulerian polynomial $A_m(z)$.  Frobenius showed in \cite{Fro} that the zeros of $A_m(z)$ are always distinct and negative. For completeness, we give a short proof that also shows their interlacing property. This proof is based on \cite[Thm. 4]{Pey} in a special case -- see also \cite[Thm. 1.34]{Bona} for a slightly weaker result.

\begin{theorem} \label{eu3}
For $m \in \Z_{\gqs 1}$ the Eulerian polynomial $A_m(z)$ has $m-1$ distinct zeros.  If $A_m(\rho)=0$ then $\rho < 0$ and $A_m(1/\rho)=0$. Also, exactly one zero of $A_m(z)$ lies between each pair of consecutive zeros of $A_{m+1}(z)$.
\end{theorem}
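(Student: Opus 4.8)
The plan is an induction on $m$ built around the recursion \eqref{redl2}, $A_{m+1}(z)=(1+mz)A_m(z)+z(1-z)A_m'(z)$, which lets us read off the sign of $A_{m+1}$ at the zeros of $A_m$. The reciprocal-symmetry claim is immediate and can be disposed of first: since $A_m(0)=1$ for all $m$ (clear from \eqref{redl2} by induction), any zero $\rho$ of $A_m$ is nonzero, and then \eqref{recip} gives $\rho^{m-1}A_m(1/\rho)=A_m(\rho)=0$, so $A_m(1/\rho)=0$ as well.

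For the counting and interlacing, I would check the base cases $m=1$ ($A_1=1$, no zeros) and $m=2$ ($A_2=1+z$, single zero $-1$) by hand, and then run the inductive step for $m\gqs 2$. Assume $A_m$ has $m-1$ distinct negative zeros $\rho_1<\rho_2<\dots<\rho_{m-1}$. These are simple, so $A_m'(\rho_j)\neq 0$ and, because $A_m$ changes sign at each $\rho_j$, the signs $\sgn A_m'(\rho_j)$ alternate with $j$. Evaluating \eqref{redl2} at $z=\rho_j$ makes the first term vanish and leaves $A_{m+1}(\rho_j)=\rho_j(1-\rho_j)A_m'(\rho_j)$; here $\rho_j<0$ and $1-\rho_j>0$, so the prefactor is negative and $\sgn A_{m+1}(\rho_j)=-\sgn A_m'(\rho_j)$ also alternates. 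The intermediate value theorem then supplies a zero of $A_{m+1}$ in each of the $m-2$ open intervals $(\rho_j,\rho_{j+1})$.

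The two remaining zeros come from the two ends. On $(\rho_{m-1},0]$ the polynomial $A_m$ is positive (no zeros there and $A_m(0)=1$), so $A_m'(\rho_{m-1})>0$ and hence $A_{m+1}(\rho_{m-1})<0$ while $A_{m+1}(0)=1>0$, producing a zero in $(\rho_{m-1},0)$. For the far left end I would use that $\deg A_m=m-1$ with leading coefficient $1$ (again by \eqref{recip}), so by \eqref{redl2} $A_{m+1}$ has degree $m$ and leading coefficient $1$; a short parity check then shows that $\sgn A_{m+1}(z)$ as $z\to-\infty$ equals $(-1)^m$, whereas $\sgn A_{m+1}(\rho_1)=-\sgn A_m'(\rho_1)=(-1)^{m+1}$, so there is a zero in $(-\infty,\rho_1)$. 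Altogether we have exhibited at least one zero of $A_{m+1}$ in each of the $m$ pairwise disjoint intervals $(-\infty,\rho_1),(\rho_1,\rho_2),\dots,(\rho_{m-2},\rho_{m-1}),(\rho_{m-1},0)$; since $\deg A_{m+1}=m$, each interval contains exactly one, all zeros of $A_{m+1}$ are negative, distinct and simple, and they strictly interlace the $\rho_j$. Reading that strict interlacing the other way gives exactly one zero of $A_m$ strictly between each pair of consecutive zeros of $A_{m+1}$, which closes the induction; the zeros of $\pl_{-m}$ are then those of $A_m$ together with $z=0$ by \eqref{fro}.

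The step I expect to be the main obstacle is the endpoint bookkeeping that yields the two extra zeros: one must get the parity of $\deg A_{m+1}=m$ right to fix the sign of $A_{m+1}$ at $-\infty$ and compare it correctly with $\sgn A_m'(\rho_1)$, and one must make sure the argument still reads correctly in the degenerate case $m=2$, where there are no interior intervals $(\rho_j,\rho_{j+1})$ at all. The remaining ingredients — the recursion identity at a zero, the constancy of $\sgn A_m$ just left of $0$, and the final degree count — are routine.
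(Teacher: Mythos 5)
Your proof is correct, and the plan carries through. It differs in execution from the paper's, which is worth noting. The paper observes that by \eqref{redl}, the zeros of $A_{m+1}$ are the critical points of the auxiliary function $f(z)=z\,A_m(z)\,(1-z)^{-m-1}$; since $f$ vanishes at $0$ and at the $m-1$ zeros of $A_m$, Rolle's theorem yields $m-1$ critical points between consecutive zeros of $f$ (including the one in $(\rho_{m-1},0)$), and the decay $f(z)\to 0$ as $z\to-\infty$ supplies the remaining critical point to the left of $\rho_1$. This packages your two endpoint cases for free: the $z=0$ endpoint is absorbed because $0$ is itself a zero of $f$, and the $z\to-\infty$ endpoint is absorbed by $f$ tending to $0$, so no parity or leading-coefficient bookkeeping is needed. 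Your version works directly with $A_{m+1}$ via \eqref{redl2}, computes $A_{m+1}(\rho_j)=\rho_j(1-\rho_j)A_m'(\rho_j)$, tracks signs, and uses the intermediate value theorem; this is more elementary (no auxiliary rational function, no Rolle) but requires the explicit sign computation at $\pm\infty$ and at $0$ that you flagged as the main obstacle. Both arguments close the induction the same way, by counting: $m$ sign changes against degree $m$, forcing simplicity, negativity, and strict interlacing. Your handling of the reciprocal symmetry via $A_m(0)=1$ and \eqref{recip} matches the paper's. One small remark: you do not actually need to note that the zeros of $\pl_{-m}$ are $\{0\}\cup\{\text{zeros of }A_m\}$ for the statement at hand, which concerns $A_m$ alone, though it is relevant context for Theorem \ref{peyer}.
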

\begin{proof}
Suppose, for our induction hypothesis that $A_m(z)$ has $m-1$ distinct negative zeros. Note that by \eqref{redl2}, $z=1$ is not a zero of $A_{m+1}(z)$. Therefore, by \eqref{redl}, the zeros of $A_{m+1}(z)$ must be the zeros of the derivative of
 \begin{equation}\label{roll}
    z \cdot A_m(z) \cdot (1-z)^{-m-1}.
 \end{equation}
By Rolle's Theorem, this derivative has zeros between the zeros of $A_m(z)$ as well as one between the greatest zero of $A_m(z)$ and $0$. Since \eqref{roll}  goes to $0$ as $z \to -\infty$, the derivative is also zero at a value less than the least zero of $A_m(z)$. This accounts for all $m$ zeros of $A_{m+1}(z)$ and completes the induction. That the zeros come in reciprocal pairs follows from \eqref{recip}.
\end{proof}

In \cite[Thm. 4]{Pey}, the above theorem is extended to all negative real numbers:

\begin{theorem}[Peyerimhoff, 1966] \label{peyer}
For $r< 0$, $\pl_r(z)$ has $-\lfloor r \rfloor$ simple zeros for $z \in \C - [1,\infty)$ and they are all $\lqs 0$.
\end{theorem}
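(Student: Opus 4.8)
The integer case $r=-m\in\Z_{\lqs -1}$ is immediate from Theorem~\ref{eu3} and \eqref{fro}: the rational function $\pl_{-m}(z)=zA_m(z)/(1-z)^{m+1}$ has exactly the zeros $z=0$ and the $m-1$ distinct negative zeros of $A_m$, all simple, and $-\lfloor -m\rfloor=m$. So assume $r<0$ is not an integer and set $n:=-\lfloor r\rfloor\gqs 1$. The plan is to obtain the count in two halves. A \emph{lower bound}: $\pl_r$ has at least $n$ distinct zeros in $(-\infty,0]$, proved by induction on $n$ using \eqref{derli} and Rolle's theorem, exactly along the lines of the proof of Theorem~\ref{eu3}. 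And the matching \emph{upper bound}: $\pl_r$ has at most $n$ zeros in $\C-[1,\infty)$, proved by the argument principle. Together these force exactly $n$ zeros, all real and lying in $(-\infty,0]$, each necessarily simple (distinct zeros whose multiplicities sum to $n$); the interlacing with the zeros of $\pl_{r-1}$ then reads off from the Rolle construction.

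For the lower bound, note first that $\pl_r(x)>0$ for $x\in(0,1)$ directly from the power series of $\pl_s$, so every real zero of $\pl_r$ in $\C-[1,\infty)$ lies in $(-\infty,0]$, and that $\pl_r(x)\to 0$ as $x\to-\infty$ by Jonqui\`ere's inversion formula (valid here since $r<0$). Let $R(k)$ be the assertion that for every non-integer $r$ with $-k\lqs r<-k+1$, $\pl_r$ has at least $k$ distinct zeros in $(-\infty,0]$, one of which is $z=0$; then $R(1)$ holds trivially. For the inductive step, given such an $r$ with $-k-1<r<-k$, apply $R(k)$ to $\pl_{r+1}$: it has some $\mu\gqs k$ distinct real zeros in $(-\infty,0]$; Rolle's theorem produces a critical point strictly between each pair of consecutive ones, and since $\pl_{r+1}$ is one-signed and nonzero on the ray to the left of its smallest zero while tending to $0$ at $-\infty$, there is one further critical point there; these $\gqs\mu\gqs k$ critical points are distinct, negative, and distinct from $z=0$. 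Since $z\pl_{r+1}'(z)=\pl_r(z)$ by \eqref{derli}, $\pl_r$ vanishes at $z=0$ and at each of them, giving $\gqs k+1$ distinct zeros in $(-\infty,0]$, which is $R(k+1)$.

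For the upper bound, one runs the argument principle on the contour $C$ of Figure~\ref{cir}, as in the discussion of $\Psi(A,B)$ in Section~\ref{77}, now with $\pl_s$ in place of $\phi_{A,B}$ and with $C$ also avoiding the cut $[1,\infty)$. Letting the horizontal paths coincide and the radii degenerate, the jump of $\pl_s(z)$ across $(1,\infty)$ is $2\pi i(\log z)^{s-1}/\Gamma(s)$ (which generalizes the jumps $2\pi i$ and $2\pi i\log z$ used for $\pl_1$ and $\li$, and vanishes when $s$ is a non-positive integer, as it must), while near $z=1$ one has $\pl_s(z)\sim\Gamma(1-s)(-\log z)^{s-1}$ and $\pl_s(z)\to 0$ as $|z|\to\infty$. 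Assembling these contributions into a winding number should give that $\pl_s$ has exactly $-\lfloor s\rfloor$ zeros in $\C-[1,\infty)$ for $s<0$, in parallel with \eqref{resu}. I expect this to be the main obstacle: the real-variable half is a routine adaptation of the Eulerian-polynomial argument, but carrying out the contour bookkeeping for non-integer $s$ — in particular handling the branch-point singularity of $\pl_s$ at $z=1$, of non-integer order $1-s$, enclosed by $C$ — is where the work lies.

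As a partial check on the low-$\Re(z)$ part one can argue directly as in Le Roy's theorem: continuing Jonqui\`ere's representation \eqref{rere} past $\Re(s)=0$ by one integration by parts gives
\begin{equation*}
    \frac{\pl_s(z)}{z}=\frac{1}{\Gamma(s+1)}\int_0^\infty\frac{t^s e^t}{(e^t-z)^2}\,dt\qquad(-1<s<0,\ z\in\C-[1,\infty)),
\end{equation*}
where $\Gamma(s+1)>0$; for $\Re(z)\lqs 1$ one has $e^t-\Re(z)>0$ on $t>0$, so the integrand has imaginary part of constant sign off the real axis and positive values on $(-\infty,1]$, whence $\pl_s$ has no zero there except $z=0$. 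This settles the base case $n=1$ away from the half-plane $\Re(z)>1$; the region $\Re(z)>1$ (where $z$ is automatically non-real) can then be reached via the inversion formula together with a short argument-principle estimate, but the cleanest route to the full statement remains the global count of the previous paragraph.
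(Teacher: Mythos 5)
The paper does not prove Theorem~\ref{peyer}; it states it with attribution to \cite[Thm.~4]{Pey}, and the only proof given is for the integer case in Theorem~\ref{eu3}. So your proposal is being compared against a cited result, not a proof in the text. Evaluated on its own, the proposal is roughly half a proof, and the gap is exactly where you say it is.

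The lower-bound half is sound in outline and is the natural continuous-parameter analogue of the Rolle argument used for Theorem~\ref{eu3}: $z\pl_{r+1}'(z)=\pl_r(z)$, $\pl_{r+1}(x)\to 0$ as $x\to-\infty$ (which does require justification beyond ``by the inversion formula,'' but is true since $\pl_{r+1}(-e^{\pi t})\sim c\,t^{\,r+1}$ as $t\to\infty$), and $\pl_{r+1}$ is real and real-analytic on $(-\infty,1)$. The induction then yields at least $n=-\lfloor r\rfloor$ distinct zeros in $(-\infty,0]$, and these are distinct from $0$. Note, however, that distinctness of the critical points and the count $n$ do \emph{not} by themselves give simplicity or exclude further complex zeros; both of those require the matching upper bound.

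The upper-bound half is not a proof but a plan, and it is the essential content. In the integer case the upper bound is free: $A_m$ is a polynomial of degree $m-1$, so once Rolle produces $m$ zeros of $\pl_{-m}$ you are done. For non-integer $r$ that crutch is gone. Your proposed substitute, the argument-principle count over the keyhole contour $C$, runs into the issue you flag: near $z=1$ one has $\pl_s(z)\sim\Gamma(1-s)(1-z)^{s-1}$, so $\pl_s'/\pl_s\sim (s-1)/(z-1)$ and the small circle around $1$ contributes $1-s$, which is not an integer for non-integer $s$. The total winding number must nonetheless be an integer, so the two horizontal paths along $(1,\infty)$ (whose integrands involve the jump $2\pi i(\log x)^{s-1}/\Gamma(s)$) must supply exactly the fractional compensation. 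Verifying this and evaluating the sum is the real work, and the proposal stops before it. Without it you cannot conclude that the count is exactly $n$, that no non-real zeros exist, or that the zeros are simple (your ``multiplicities sum to $n$'' step presupposes an upper bound counted with multiplicity). Peyerimhoff's own proof does not proceed via contour counting; so this is not only a gap but also a genuinely different and unverified route. As it stands the argument establishes only ``$\pl_r$ has at least $-\lfloor r\rfloor$ distinct zeros on $(-\infty,0]$,'' which is strictly weaker than the theorem.
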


Sobolev, in \cite{Sob77}, seems to have been the first to locate of the zeros of $A_m(z)$ for $m$ large, with an explicit form for the error  given  in \cite{Sir78}. We give this result  next along with  the proof, since  \cite{Sob77,Sir78} give only brief summaries. Further results on these zeros appear in \cite{Sob78,Sob79,Sobb79}.


\begin{theorem} \label{sobol}
Fix $M>1$.
Suppose $m$ is large enough that
\begin{equation*}
    K:=\left(1 + \frac{1}{4}\sqrt{9\pi^2+\log^2 M} \right)\left(\frac{\pi^2+\log^2 M}{9\pi^2+\log^2 M}\right)^{(m+1)/2} \lqs 1/3.
\end{equation*}
Label the zeros of $A_m(z)$ as $\lambda_{m-1} <\lambda_{m-2} < \cdots <\lambda_{1}<0$.
Then for each $j$  with $-M \lqs \lambda_j \lqs -1/M$ we have
\begin{equation*}
    \lambda_j = -\exp\left(-\pi \cot\left(\frac{\pi(2j+1+\varepsilon_j)}{2(m+1)}\right)\right) \qquad \text{for} \qquad |\varepsilon_j| \lqs 2K/3.
\end{equation*}
\end{theorem}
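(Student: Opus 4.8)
The plan is to turn the polynomial equation $A_m(\lambda)=0$ into a transcendental equation for a lattice sum. By \eqref{fro}, for $\lambda\neq 0,1$ we have $A_m(\lambda)=0$ if and only if $\pl_{-m}(\lambda)=0$. I would first record the identity
\[
\pl_{-m}\bigl(e^{2\pi i w}\bigr)=c_m\sum_{n\in\Z}\frac{1}{(w+n)^{m+1}},\qquad c_m:=\frac{(-1)^{m+1}m!}{(2\pi i)^{m+1}}\neq 0,
\]
obtained by differentiating $m$ times the Fourier expansion $\pi\cot\pi w=-\pi i-2\pi i\sum_{n\gqs1}e^{2\pi i n w}$ (valid for $\Im w>0$), comparing with $\pi\cot\pi w=\sum_{n\in\Z}1/(w+n)$, and continuing meromorphically. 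Writing a negative real number as $\lambda=-e^{2\pi v}$ with $v\in\R$ gives $w=\tfrac12-iv$, so that $e^{2\pi iw}=\lambda$, and $-M\lqs\lambda\lqs-1/M$ becomes $|v|\lqs\tfrac{\log M}{2\pi}$. Since $c_m\neq0$, the task is to analyse $\sum_{n\in\Z}(w+n)^{-(m+1)}=0$.

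Second, I would isolate the two dominant terms. Because $\Re w=\tfrac12$ we have $w-1=-\overline w$ and, more generally, $w+(-1-n)=-\overline{w+n}$, so the summands pair across $n\leftrightarrow-1-n$; the pair of least modulus is $n\in\{0,-1\}$. Setting $w^{-(m+1)}=|w|^{-(m+1)}e^{i\alpha}$ with $\alpha=-(m+1)\arg w$, the sum becomes $|w|^{-(m+1)}\bigl(e^{i\alpha}+(-1)^{m+1}e^{-i\alpha}\bigr)+R$ with $R=\sum_{n\neq0,-1}(w+n)^{-(m+1)}$, so the equation reads $2|w|^{-(m+1)}\cos\alpha=-R$ for odd $m$ and $2i|w|^{-(m+1)}\sin\alpha=-R$ for even $m$. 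Using the pairing, $|R|\lqs2\sum_{n\gqs1}|w+n|^{-(m+1)}$; after pulling out $|w+1|^{-(m+1)}$, the leading ratio $(|w|/|w+1|)^{m+1}=\bigl((\tfrac14+v^2)/(\tfrac94+v^2)\bigr)^{(m+1)/2}$ is largest over $|v|\lqs\tfrac{\log M}{2\pi}$ at the endpoint, giving $\bigl(\tfrac{\pi^2+\log^2M}{9\pi^2+\log^2M}\bigr)^{(m+1)/2}$, while a routine geometric estimate gives $\sum_{n\gqs1}(|w+1|/|w+n|)^{m+1}\lqs1+\tfrac14\sqrt{9\pi^2+\log^2M}$. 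Thus $|R|\lqs2K|w|^{-(m+1)}$ with $K$ exactly as in the statement, so $|\cos\alpha|\lqs K$ (respectively $|\sin\alpha|\lqs K$) with $K\lqs1/3$. Hence $\alpha$ lies within $\delta$ of an integer multiple of $\pi/2$ of the parity of $m$, where $|\sin\delta|\lqs K$, so $|\delta|\lqs\arcsin K$.

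Third, I translate back. With $\psi:=\pi/2-\arg w$ one has $\cot\psi=-2v$, hence $\lambda=-e^{-\pi\cot\psi}$, and from $\alpha=(m+1)(\psi-\pi/2)$ together with $\alpha=\tfrac{\pi\ell}{2}+\delta$ (where $\ell\equiv m\bmod2$) we get $\psi=\tfrac{\pi(m+1+\ell)}{2(m+1)}+\tfrac{\delta}{m+1}$; since $m+1+\ell$ is odd we put $2j'+1:=m+1+\ell$, so $\psi=\tfrac{\pi(2j'+1+\varepsilon)}{2(m+1)}$ with $\varepsilon=2\delta/\pi$ and $|\varepsilon|\lqs2\arcsin K/\pi\lqs2K/3$, the last step because $t\mapsto\arcsin(t)/t$ is increasing on $(0,1)$ and $3\arcsin(1/3)<\pi/3$. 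It remains to check that $j'$ equals the label $j$ of $\lambda$ under $\lambda_{m-1}<\cdots<\lambda_1<0$. The indices with $\lambda_j\in[-M,-1/M]$ form a block symmetric under $j\mapsto m-j$ (since $\lambda_j\lambda_{m-j}=1$ by Theorem \ref{eu3}, so $v_{m-j}=-v_j$), and likewise the approximants $z_k:=-\exp(-\pi\cot\tfrac{\pi(2k+1)}{2(m+1)})$ satisfy $z_kz_{m-k}=1$; the sign changes of the real-valued function $v\mapsto\pl_{-m}(-e^{2\pi v})$, which by the estimate above occur in disjoint small windows, one around each $z_k$ in range, furnish an order-preserving bijection between the two symmetric blocks, so they coincide and the bijection is the identity, i.e. $j'=j$. (When $m$ is even one may instead anchor at the exact zero $\lambda_{m/2}=-1=z_{m/2}$.)

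The fussiest points will be: (a) carrying out the bound on $|R|$ so that it produces precisely the displayed $K$ — the maximisation in $v$ and the tail estimate are routine but the constant must be tracked; and (b) the index identification, which requires that in each small $v$-window the derivative of the error stays below that of the dominant term, so that each window contains exactly one zero of $\pl_{-m}$ (disjointness of windows is automatic, since $\arcsin K<\pi/6$). Neither is deep. Steps two and three may alternatively be packaged as a Rouch\'e argument: $f(z):=w(z)^{-(m+1)}+(w(z)-1)^{-(m+1)}$ has its zeros exactly at the $z_k$, and on a small circle about the $j$th of those (for $\lambda_j$ in range) one verifies $|R|<|f|$, so $\pl_{-m}$ has a single zero inside, yielding existence, the correct index, and the error estimate at once.
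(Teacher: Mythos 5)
Your plan is essentially the paper's proof: both start from Lipschitz's formula (you derive it from the Fourier series for $\cot$, the paper cites it), parameterize the zero $\lambda$ as $-e^{\pi t}$ (you use $-e^{2\pi v}$ with $w=\tfrac12-iv$, an equivalent change of variables), isolate the dominant terms $n\in\{0,-1\}$, bound the tail to get the same $K$, deduce $|\varepsilon|\lqs 2K/3$ from $|\sin|\gqs 3|\cdot|$ near $0$ (you phrase it via $\arcsin$), verify one zero per window by a derivative estimate, and resolve the index via the symmetry $\lambda_j\lambda_{m-j}=1$ and the anchor $\lambda_{m/2}=-1$. Your remark that the uniqueness-per-window step can instead be packaged as a Rouch\'e argument is a tidy alternative to the paper's Lemma after \eqref{ze3}, but it is offered as an aside and not developed, so the core route is the same.
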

\begin{proof}
Lipschitz's formula (an application of Poisson summation) gives
\begin{equation}\label{lipz}
    \frac{\pi^{1-s}}{\G(1-s)}\sum_{n=1}^\infty \frac{e^{\pi n z}}{n^s} = \sum_{k \in \Z} (2ki-z)^{s-1} \qquad (\Re(z), \ \Re(s)<0)
\end{equation}
in a special case, see \cite[Sect. 37]{Ra}. With $z=i+t$ and $s=-m$  in \eqref{lipz} we obtain
\begin{equation}\label{lipz2}
    \frac{\pi^{1+m}}{m!}\pl_{-m}(-e^{\pi t}) = 2\Re \sum_{k =1}^\infty \frac{1}{((2k-1)i - t)^{m+1}} \qquad (m \in \Z_{\gqs 1}).
\end{equation}
Equation \eqref{lipz2} is now valid for all $t \in \R$, because of the convergence of the right side of \eqref{lipz2}, and clearly both sides of \eqref{lipz2} are smooth functions of $t$.
For $m$ large, the term with $k=1$ on right side of \eqref{lipz2} is largest. Let $R_m(t)$ be the rest of the series. Then
\begin{equation} \label{ze}
    \pl_{-m}(-e^{\pi t}) = 0 \iff 2\Re\frac{1}{(i -t)^{m+1}} + R_m(t) = 0
\end{equation}
where
\begin{equation*}
    |R_m(t)|  \lqs  \sum_{k=2}^\infty \frac{2}{((2k-1)^2+t^2)^{(m+1)/2}}
     \lqs   \frac{2}{(9+t^2)^{(m+1)/2}} +  \int_{2}^\infty \frac{2 dx}{((2x-1)^2+t^2)^{(m+1)/2}}
\end{equation*}
This last integral is
\begin{align*}
    \int_{3}^\infty \frac{du}{(u^2+t^2)^{(m+1)/2}} & \lqs \frac{1}{(9+t^2)^{(m-1)/2}}\int_{3}^\infty \frac{du}{u^2+t^2}\\
     & \lqs \frac{1}{(9+t^2)^{(m-1)/2}}\int_{0}^\infty \frac{du}{u^2+9+t^2} = \frac{\pi}{2(9+t^2)^{m/2}}
\end{align*}
and we have shown
\begin{equation} \label{cour}
    |R_m(t)| \lqs \frac{1}{(9+t^2)^{(m+1)/2}}\left(2 + \frac{\pi}{2}\sqrt{9+t^2} \right).
\end{equation}
Next write
\begin{equation*}
    i t+1 = \sqrt{1+t^2} \cdot e^{i\pi\theta} \quad \text{with} \quad t=\tan(\pi\theta) \quad \text{for} \quad  -\frac{\pi}2 < \theta < \frac{\pi}2
\end{equation*}
to get
\begin{equation*}
    2\Re\frac{1}{(i -t)^{m+1}} = \frac{2\cos\bigl(\pi(m+1)(\theta+1/2)\bigr)}{ (1+t^2)^{(m+1)/2}}.
\end{equation*}
Let
\begin{equation*}
    R_m^*(t):=\frac 12 (1+t^2)^{(m+1)/2} R_m(t)
\end{equation*}
and \eqref{ze} implies that $\pl_{-m}(-e^{\pi \tan(\pi \theta)}) = 0$ if and only if
\begin{equation} \label{ze2}
     \cos\bigl(\pi(m+1)(\theta+1/2)\bigr) + R_m^*(\tan(\pi \theta)) = 0.
\end{equation}
The values of $t$ we are interested in have $-M \lqs -e^{\pi t} \lqs -1/M$, which is equivalent to $\pi|t| \lqs \log M$, and we see from \eqref{cour} that $|R_m^*(t)| \lqs K \lqs 1/3$ for these values of $t$. The corresponding range of $\theta$ is $-X \lqs \theta \lqs X$ for
\begin{equation*}
    X:=\frac 1\pi \arctan\left( \frac{\log M}{\pi}\right), \quad 0< X <\frac \pi 2.
\end{equation*}

So now we study the left side of \eqref{ze2} for $-X \lqs \theta \lqs X$. Write $\theta$ uniquely as
\begin{equation} \label{the}
    \theta = \frac{2j-m+\varepsilon}{2(m+1)} \qquad \text{for} \qquad j \in \Z, \ -1<\varepsilon \lqs 1
\end{equation}
and all the zeros of $\cos\bigl(\pi(m+1)(\theta+1/2)\bigr)$ occur for $j \in \Z$ and $\varepsilon = 0$
since
\begin{equation} \label{the2}
    \cos\bigl(\pi(m+1)(\theta+1/2)\bigr) = (-1)^{j+1}\sin(\pi \varepsilon/2).
\end{equation}

\begin{lemma} \label{also}
If
$\pl_{-m}(-e^{\pi \tan(\pi \theta)}) = 0$ for $-X \lqs \theta \lqs X$ then $\theta$ is of the form \eqref{the}
for some $j \in \Z$ and $\varepsilon$ satisfying $|\varepsilon| \lqs 2K/3$.
\end{lemma}
\begin{proof}
With \eqref{the2} and \eqref{ze2}
\begin{equation*}
    \bigl|\sin(\pi \varepsilon/2)\bigr| = \Bigl|\cos\bigl(\pi(m+1)(\theta+1/2)\bigr)\Bigr| = \bigl|R_m^*(\tan(\pi \theta))\bigr| \lqs K.
\end{equation*}
Recalling that $\bigl|\sin(\pi x)\bigr| \gqs 3|x|$ for $|x|\lqs 1/6$ gives the desired inequality for $\varepsilon$.
\end{proof}

Set $\theta_j:= \frac{2j-m}{2(m+1)}$ for $j\in \Z$. We see now that, for $-X \lqs \theta \lqs X$, the left side of \eqref{ze2} is possibly zero only for $\theta$ in intervals of the form $\bigl[\theta_j-\frac{K}{3(m+1)}, \theta_j+\frac{K}{3(m+1)}\bigr]$. It is also clear from Lemma \ref{also} and \eqref{the2} that outside these intervals the left side of \eqref{ze2} alternates $>0$ and $<0$. Therefore there is at least one zero in each such interval.

The next lemma shows there is at most one zero for $\theta$ in each of these intervals -- this point was not addressed in \cite{Sob77,Sir78}.

\begin{lemma}
The left side of \eqref{ze2} is strictly increasing or decreasing for $-X \lqs \theta \lqs X$ when $\theta$ is in the interval $\bigl[\theta_j-\frac{K}{3(m+1)}, \theta_j+\frac{K}{3(m+1)}\bigr]$.
\end{lemma}
\begin{proof}
A short computation, using $\cos^2(\pi \theta) = 1+t^2$, shows that $\frac{d}{d\theta}$ of the left of \eqref{ze2} may be expressed as
\begin{equation} \label{ze3}
     \pi(m+1)\left[-\sin\bigl(\pi(m+1)(\theta+1/2)\bigr) + \frac{t}{(1+t^2)^2}R_m^*(t)  + \frac{1}{(1+t^2)^2}R_{m+1}^*(t)\right].
\end{equation}
Write $\theta$ in the form \eqref{the} with $|\varepsilon| \lqs 2K/3$. Since
\begin{gather*}
    \Bigl|\sin\bigl(\pi(m+1)(\theta+1/2)\bigr)\Bigr|  = \bigl|\cos(\pi \varepsilon/2)\bigr|,\\
    \left|\frac{t}{(1+t^2)^2}R_m^*(t)  + \frac{1}{(1+t^2)^2}R_{m+1}^*(t)\right| <2K
\end{gather*}
we see that  \eqref{ze3} is non-zero if $|\cos(\pi \varepsilon/2)|>2K$ and this is equivalent to $1-\sin^2(\pi \varepsilon/2) > 4K^2$.
With $|\sin(x)|\lqs |x|$ we see that $|\sin(\pi \varepsilon/2)| \lqs \pi K/3$ for $|\varepsilon| \lqs 2K/3$. Hence
\begin{equation} \label{kp}
    K^2 < 1/(4+\pi^2/3^2)
\end{equation}
implies \eqref{ze3} is non-zero. Inequality \eqref{kp} is true by our assumption $K \lqs 1/3$.
\end{proof}

We have therefore shown that every zero $\lambda$ of $\pl_{-m}(z)$, for $-M \lqs \lambda \lqs -1/M$, is of the form
\begin{align}
    \lambda  & = -\exp\left(\pi \tan\left(\pi \frac{2j-m+\varepsilon}{2(m+1)}\right)\right) \notag\\
    & = -\exp\left(-\pi \cot\left(\pi \frac{2j+1+\varepsilon}{2(m+1)}\right)\right) \qquad \text{with} \qquad |\varepsilon| \lqs 2K/3 \label{zrro}
\end{align}
and also that there is exactly one zero of the form \eqref{zrro}
when the interval $\bigl[\theta_j-\frac{K}{3(m+1)}, \theta_j+\frac{K}{3(m+1)}\bigr]$  is contained in $(-X,X)$.
It only remains to match \eqref{zrro} with the ordering of the zeros $\lambda_{m-1} < \cdots <\lambda_1$. (Recall from Theorem \ref{eu3} that $\lambda_{m-j}=1/\lambda_j$.) If $m$ is even then $j=m/2$ in \eqref{zrro} gives the zero of
$\pl_{-m}(z)$ closest to $z=-1$. In fact $z=\lambda_{m/2}=-1$ is the middle zero in this case. Also $j=m/2-1$ in \eqref{zrro} gives the next zero to the right and $j=m/2+1$ the next to the left. Hence $\lambda_j$ is given by \eqref{zrro}. Similarly for $m$ odd. This completes the proof of Theorem \ref{sobol}.
\end{proof}

The above theorem is also essentially equivalent to \cite[Thm. 4]{Gaw} and we have used their ordering of the zeros since it generalizes to other $s$ values more readily. As an example, take $M=1000$ and $m=10$, giving $K \approx 0.034$. The leftmost two zeros of $A_{10}(z)$ are $\lambda_9\approx -963.85$, $\lambda_8\approx -37.54$ and all zeros are between $-M$ and $-1/M$. By comparison, the values of $-\exp\left(-\pi \cot\left(\pi \frac{2j+1}{2(m+1)}\right)\right)$ for $j=9,8$ are $-971.78, -37.55$. We have $|\varepsilon_j| < 0.0032$ for $1\lqs j\lqs 9$ and this  is less than $2K/3 \approx 0.023$.


\SpecialCoor
\psset{griddots=5,subgriddiv=0,gridlabels=0pt}
\psset{xunit=6cm, yunit=6cm}
\psset{linewidth=1pt}
\psset{dotsize=3pt 0,dotstyle=*}

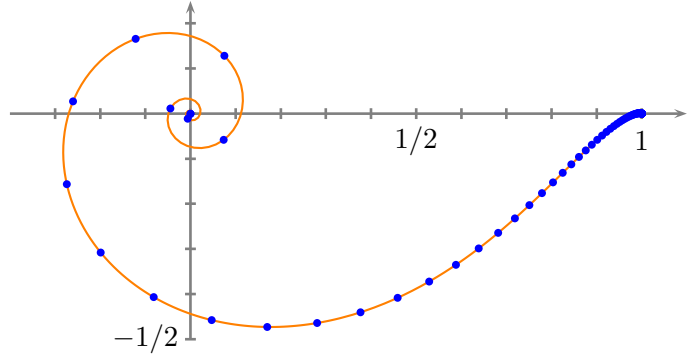
\begin{figure}[h]
\begin{center}
\begin{pspicture}(-0.3,-0.5)(1.1,0.2) 

\savedata{\mydata}[
{{0,0},{0.000265119, -0.000616217}, {-0.00611432, -0.0112751}, {-0.0444369,
0.0113761}, {0.0736261, -0.0581044}, {0.075204, 0.128289},
{-0.121428, 0.165837}, {-0.260154, 0.0271782}, {-0.273929,
-0.156466}, {-0.198853, -0.308035}, {-0.0814242, -0.40684},
{0.0470798, -0.457835}, {0.170144, -0.473086}, {0.28071, -0.464271},
{0.376847, -0.440542}, {0.459084, -0.408438}, {0.528953, -0.372386},
{0.588251, -0.335271}, {0.638704, -0.298904}, {0.68183, -0.264369},
{0.718906, -0.232266}, {0.750982, -0.20288}, {0.778905, -0.176291},
{0.803356, -0.152453}, {0.824881, -0.131239}, {0.843917, -0.112481},
{0.860818, -0.0959848}, {0.875871, -0.08155}, {0.889309, -0.0689764},
{0.901327, -0.0580709}, {0.912089, -0.0486512}, {0.921731,
-0.0405473}, {0.930374, -0.033603}, {0.938119, -0.0276759},
{0.945058, -0.0226372}, {0.95127, -0.0183715}, {0.956827,
-0.0147755}, {0.961795, -0.0117577}, {0.966229, -0.00923717},
{0.970184, -0.0071427}, {0.973707, -0.00541199}, {0.976841,
-0.00399065}, {0.979626, -0.00283141}, {0.982097, -0.00189337},
{0.984287, -0.00114122}, {0.986225, -0.000544617}, {0.987939,
-0.0000775486}, {0.989452, 0.000282181}, {0.990786, 0.000553458},
{0.991961, 0.000752293}, {0.992994, 0.000892211}, {0.993902,
0.000984589}, {0.994699, 0.00103896}, {0.995397, 0.00106329},
{0.996008, 0.00106418}, {0.996542, 0.00104709}, {0.997009,
0.00101651}, {0.997416, 0.000976121}, {0.99777, 0.000928886},
{0.998079, 0.000877201}, {0.998347, 0.000822976}, {0.998579,
0.000767713}, {0.998781, 0.000712583}, {0.998956, 0.000658477},
{0.999107, 0.000606061}, {0.999237, 0.000555816}, {0.99935,
0.000508075}, {0.999447, 0.000463049}, {0.99953, 0.000420854},
{0.999602, 0.000381532}, {0.999663, 0.000345067}, {0.999716,
0.000311398}, {0.999761, 0.000280432}, {0.999799, 0.000252052},
{0.999832, 0.000226125}, {0.99986, 0.00020251}, {0.999884,
0.000181058}, {0.999904, 0.000161619}, {0.999921, 0.000144047},
{0.999935, 0.000128195}, {0.999947, 0.000113926}, {0.999957,
0.000101106}, {0.999965, 0.0000896086}, {0.999972, 0.0000793154},
{0.999978, 0.0000701156}, {0.999983, 0.0000619061}, {0.999987,
0.0000545913}, {0.99999, 0.0000480834}, {0.999993, 0.0000423015},
{0.999995, 0.0000371716}, {0.999997, 0.0000326263}, {0.999998,
0.0000286041}, {0.999999, 0.0000250493}, {1., 0.0000219115}, {1.,
0.0000191452}, {1., 0.0000167092}, {1., 0.0000145666}, {1.,
0.0000126842}}
]

\savedata{\mydatb}[
{{0.0000334438,
  0.0000319987}, {-0.0000855334, -0.0000368327}, {0.000162703,
-0.0000528502}, {-0.0000690329,
  0.00028347}, {-0.000399868, -0.000243028}, {0.000430212,
-0.000568456}, {0.000872203, 0.00056641}, {-0.000509639,
  0.00137031}, {-0.00199091, -0.0000178812}, {-0.00114306,
-0.00237721}, {0.00183954, -0.00287335}, {0.00431111, -0.000292704},
{0.00377505, 0.00382215}, {-0.0000951439, 0.00656939}, {-0.00539562,
  0.00579567}, {-0.00932978,
  0.0012981}, {-0.00973117, -0.00528692}, {-0.00594346, -0.0114305},
{0.00112518, -0.014802}, {0.00952593, -0.0140284}, {0.0169993,
-0.00896271}, {0.0216159, -0.000539517}, {0.0221763,
  0.00960996}, {0.0183469, 0.019607}, {0.0105841,
  0.0277156}, {-0.0000690327, 0.0326107}, {-0.0122359,
  0.0335124}, {-0.0244494, 0.0302036}, {-0.0353491,
  0.0229651}, {-0.0438179,
  0.0124624}, {-0.0490574, -0.000386803}, {-0.0506114, -0.0145399},
{-0.0483494, -0.0289409}, {-0.0424244, -0.0426067}, {-0.0332149,
-0.0546878}, {-0.0212635, -0.0645043}, {-0.00721569, -0.0715617},
{0.00823396, -0.0755502}, {0.0243876, -0.076332}, {0.0405797,
-0.0739214}, {0.0562038, -0.0684598}, {0.0707298, -0.0601904},
{0.0837142, -0.0494318}, {0.0948041, -0.036554}, {0.103736,
-0.0219579}, {0.110332, -0.00605589}, {0.114492,
  0.0107425}, {0.116184, 0.028043}, {0.11544, 0.0454754}, {0.112338,
  0.0627005}, {0.107001, 0.0794138}, {0.099582,
  0.0953486}, {0.0902588, 0.110276}, {0.0792261,
  0.124006}, {0.0666892, 0.136384}, {0.052858, 0.147291}, {0.0379427,
  0.15664}, {0.02215, 0.164372}, {0.00567953, 0.170455}, {-0.0112778,
  0.174883}, {-0.028542, 0.177666}, {-0.0459457,
  0.178834}, {-0.063334, 0.178432}, {-0.080566,
  0.176517}, {-0.0975142, 0.173155}, {-0.114065,
  0.168421}, {-0.130117, 0.162395}, {-0.145583, 0.155163}, {-0.160387,
   0.146811}, {-0.174465, 0.137428}, {-0.187763, 0.127104}, {-0.20024,
   0.115929}, {-0.21186, 0.103988}, {-0.222598,
  0.0913669}, {-0.232436, 0.078149}, {-0.241364,
  0.0644138}, {-0.249378, 0.0502374}, {-0.256478,
  0.0356926}, {-0.262671, 0.0208483}, {-0.267967,
  0.00576958}, {-0.272381, -0.00948244}, {-0.27593, -0.0248506},
{-0.278635, -0.0402816}, {-0.280517, -0.0557259}, {-0.281603,
-0.0711381}, {-0.281918, -0.0864759}, {-0.28149, -0.101701},
{-0.280346, -0.116778}, {-0.278518, -0.131676}, {-0.276033,
-0.146365}, {-0.272922, -0.16082}, {-0.269215, -0.175017},
{-0.264941, -0.188936}, {-0.26013, -0.20256}, {-0.254812, -0.215872},
{-0.249015, -0.228859}, {-0.242767, -0.241509}, {-0.236096,
-0.253813}, {-0.229029, -0.265763}, {-0.221591, -0.277352},
{-0.213808, -0.288575}, {-0.205705, -0.29943}, {-0.197305,
-0.309913}, {-0.188631, -0.320024}, {-0.179704, -0.329762},
{-0.170547, -0.339129}, {-0.161179, -0.348125}, {-0.151619,
-0.356755}, {-0.141887, -0.365019}, {-0.131999, -0.372924},
{-0.121973, -0.380472}, {-0.111824, -0.387668}, {-0.101567,
-0.394519}, {-0.0912183, -0.401029}, {-0.0807901, -0.407205},
{-0.0702959, -0.413052}, {-0.0597478, -0.418578}, {-0.0491576,
-0.423788}, {-0.0385361, -0.428691}, {-0.0278937, -0.433293},
{-0.0172402, -0.4376}, {-0.00658456, -0.441621}, {0.00406449,
-0.445363}, {0.014699, -0.448832}, {0.0253113, -0.452036},
{0.0358945, -0.454983}, {0.0464419, -0.45768}, {0.0569474,
-0.460133}, {0.0674054, -0.46235}, {0.0778105, -0.464339},
{0.0881578, -0.466105}, {0.0984428, -0.467657}, {0.108661, -0.469},
{0.118809, -0.470143}, {0.128884, -0.47109}, {0.138881, -0.471849},
{0.148798, -0.472427}, {0.158632, -0.472829}, {0.168381, -0.473062},
{0.178042, -0.473131}, {0.187614, -0.473043}, {0.197095, -0.472803},
{0.206483, -0.472418}, {0.215777, -0.471891}, {0.224976, -0.47123},
{0.234078, -0.470438}, {0.243083, -0.469522}, {0.251991, -0.468486},
{0.2608, -0.467334}, {0.269509, -0.466073}, {0.27812, -0.464705},
{0.286631, -0.463237}, {0.295043, -0.461671}, {0.303355, -0.460013},
{0.311567, -0.458265}, {0.319681, -0.456433}, {0.327695, -0.454521},
{0.332983, -0.453202}, {0.358769, -0.446133}, {0.383481, -0.438359},
{0.407144, -0.429993}, {0.429788, -0.421138}, {0.451446, -0.411885},
{0.472154, -0.402316}, {0.491949, -0.392501}, {0.510869, -0.382506},
{0.528953, -0.372386}, {0.546238, -0.362192}, {0.562761, -0.351968},
{0.578559, -0.341753}, {0.593668, -0.331578}, {0.608121, -0.321475},
{0.621952, -0.311468}, {0.635192, -0.301578}, {0.647871, -0.291826},
{0.660018, -0.282226}, {0.671661, -0.272792}, {0.682826, -0.263535},
{0.693536, -0.254465}, {0.703816, -0.245589}, {0.713687, -0.236912},
{0.72317, -0.228441}, {0.732285, -0.220177}, {0.74105, -0.212124},
{0.749483, -0.204283}, {0.7576, -0.196653}, {0.765416, -0.189236},
{0.772947, -0.18203}, {0.780205, -0.175034}, {0.787203, -0.168247},
{0.793954, -0.161665}, {0.800469, -0.155286}, {0.806759, -0.149107},
{0.812834, -0.143125}, {0.818702, -0.137337}, {0.824374, -0.131739},
{0.829858, -0.126327}, {0.835161, -0.121097}, {0.840291, -0.116045},
{0.845255, -0.111168}, {0.850059, -0.10646}, {0.854711, -0.101918},
{0.859216, -0.0975383}, {0.863579, -0.0933156}, {0.867806,
-0.0892461}, {0.871901, -0.0853256}, {0.875871, -0.08155}, {0.879718,
-0.077915}, {0.883448, -0.0744167}, {0.887065, -0.071051}, {0.890572,
-0.0678139}, {0.893973, -0.0647014}, {0.897272, -0.0617097},
{0.900471, -0.0588351}, {0.903575, -0.0560738}, {0.906587,
-0.053422}, {0.909508, -0.0508764}, {0.912343, -0.0484333},
{0.915093, -0.0460893}, {0.917762, -0.043841}, {0.920351,
-0.0416853}, {0.922864, -0.0396189}, {0.925302, -0.0376387},
{0.927668, -0.0357417}, {0.929964, -0.0339249}, {0.932191,
-0.0321855}, {0.934353, -0.0305206}, {0.93645, -0.0289276},
{0.938484, -0.0274038}, {0.940458, -0.0259467}, {0.942374,
-0.0245537}, {0.944232, -0.0232225}, {0.946034, -0.0219507},
{0.947782, -0.0207361}, {0.949478, -0.0195764}, {0.951123,
-0.0184696}, {0.952718, -0.0174135}, {0.954265, -0.0164062},
{0.955765, -0.0154457}, {0.95722, -0.0145302}, {0.95863, -0.0136579},
{0.959997, -0.012827}, {0.961323, -0.0120359}, {0.962607,
-0.0112829}, {0.963853, -0.0105664}, {0.96506, -0.00988499},
{0.966229, -0.00923717}, {0.967363, -0.00862151}, {0.968461,
-0.00803666}, {0.969525, -0.00748132}, {0.970555, -0.00695422},
{0.971553, -0.00645415}, {0.97252, -0.00597992}, {0.973457,
-0.00553043}, {0.974364, -0.00510458}, {0.975242, -0.00470133},
{0.976092, -0.00431968}, {0.976915, -0.00395865}, {0.977711,
-0.00361733}, {0.978483, -0.00329482}, {0.979229, -0.00299026},
{0.979951, -0.00270284}, {0.98065, -0.00243175}, {0.981326,
-0.00217624}, {0.98198, -0.00193559}, {0.982613, -0.00170909},
{0.983225, -0.00149607}, {0.983817, -0.00129589}, {0.98439,
-0.00110793}, {0.984943, -0.000931609}, {0.985478, -0.00076635},
{0.985996, -0.000611614}, {0.986496, -0.000466882}, {0.98698,
-0.000331655}, {0.987447, -0.000205457}, {0.987899, -0.0000878319},
{0.988335, 0.0000216559}, {0.988757, 0.000123424}, {0.989164,
0.000217871}, {0.989558, 0.00030538}, {0.989938, 0.000386313},
{0.990305, 0.00046102}, {0.99066, 0.000529834}, {0.991002,
0.000593071}, {0.991333, 0.000651036}, {0.991652, 0.000704018},
{0.991961, 0.000752293}, {0.992258, 0.000796126}, {0.992546,
0.000835766}, {0.992823, 0.000871453}, {0.99309, 0.000903417},
{0.993348, 0.000931874}, {0.993598, 0.000957032}, {0.993838,
0.000979087}, {0.99407, 0.000998228}, {0.994293, 0.00101463}}
]

\psline[linecolor=gray]{->}(-0.4,0)(1.1,0)
\psline[linecolor=gray]{->}(0,-0.5)(0,0.25)
\multirput(-0.3,-0.012)(0.1,0){14}{\psline[linecolor=gray](0,0)(0,0.024)}
\multirput(-0.012,-0.5)(0,0.1){8}{\psline[linecolor=gray](0,0)(0.024,0)}

\dataplot[linecolor=orange,linewidth=0.8pt,plotstyle=line]{\mydatb}
\dataplot[linecolor=blue,linewidth=0.8pt,plotstyle=dots]{\mydata}

\rput(-0.1,-0.5){$-1/2$}
\rput(0.5,-0.06){$1/2$}
\rput(1,-0.06){$1$}

\end{pspicture}
\caption{The zeros of $\pl_{s}(z)$ for $s=-10-44i$}\label{zfig}
\end{center}
\end{figure}

 Gawronski and Stadtm\"uller in \cite{Gaw} generalize Theorems \ref{sobol} and \ref{peyer} to the zeros of $\pl_s(z)$ for  $s \in \C$ with $\Re(s)<0$. The required computations become much more elaborate in this  general case. Though the work in \cite{Gaw} is independent of \cite{Sob77}, it is based on the same essential idea.
Starting with \eqref{lipz}, replace $z$ with $z+i$ and ignore the terms with $k \neq 0,1$ to show
\begin{equation*}
    \frac{\pi^{1-s}}{\G(1-s)}\pl_s(-e^{\pi z}) \approx \frac{1}{(i-z)^{1-s}}+\frac{1}{(-i-z)^{1-s}}
\end{equation*}
where the error may be explicitly bounded. Therefore
\begin{align*}
    \pl_s(-e^{\pi z}) = 0 & \iff 1+\left(\frac{i-z}{-i-z}\right)^{1-s} \approx 0 \\
    & \iff \frac{i-z}{-i-z} \approx \exp\left( \frac{\pi i(2j+1)}{1-s}\right) \qquad (j\in \Z)\\
    & \iff z \approx -\cot\left( \frac{\pi (2j+1)}{2(1-s)}\right) \qquad (j\in \Z)
\end{align*}
and we expect the zeros of $\pl_s(z)$ to approximately take the form
\begin{equation}\label{plzo}
    -\exp\left(-\pi \cot\left( \frac{\pi (2j+1)}{2(1-s)}\right)\right).
\end{equation}
Due to branch considerations, \eqref{plzo} will correspond to zeros of $\pl_s(z)$ only for certain integers $j \gqs 0$, see \cite[Lemma 2, Thm. 2]{Gaw}.

Figure \ref{zfig} shows the zeros of $\pl_{s}(z)$ for $s=-10-44i$, found numerically. They are indistinguishable, at the scale of the figure, from the points $0$ and \eqref{plzo} for $j=0,1, \cdots, 139$.  This $s$ satisfies $s=(1+v)(1+4i)-1$ for $v=10$. If we change $s$ by increasing $v$, then the number of zeros of $\pl_{s}(z)$ increases, all getting closer to the spiral shown in Figure \ref{zfig} and filling it more densely. See \cite[Thm. 2]{Gaw} for  precise statements.

It should be possible to extend the results in \cite{Gaw} to all branches. We have seen that $\li(z)$ has infinitely many zeros if we include those of every branch, while $\pl_1(z)$ has only one zero. A natural question arises: for any fixed $s \in \C$, how many zeros does $\pl_s(z)$ have on all branches?

{\small
\bibliography{dilogzerosdataupdated}
}

\textsc{Dept. of Mathematics, The CUNY Graduate Center , New York, NY 10016-4309, U.S.A.}

{\em E-mail address:} \texttt{cosullivan@gc.cuny.edu}

{\em Web page:} \texttt{http://fsw01.bcc.cuny.edu/cormac.osullivan}

\end{document}